%----------------------------------------------------------------------------------------
%
%     Last modified: G. Ferre, Rennes, 1er juillet 2018
%
%----------------------------------------------------------------------------------------

\documentclass{article}

\usepackage[a4paper, total={7in, 8in}]{geometry}
\usepackage[utf8]{inputenc}   % LaTeX, comprends les accents !
\usepackage[T1]{fontenc}      % Police contenant les caractÃ¨res franÃ§ais
\usepackage{multicol}
\usepackage{amsmath,amsthm} 
\usepackage{amssymb,mathrsfs} 
\usepackage{amssymb}
\usepackage{amsfonts}
\usepackage[normalem]{ulem}
\usepackage{nicefrac}
\usepackage{graphicx}
\usepackage{enumerate}
\usepackage{graphicx} %insertion d'images 
\usepackage{lmodern} %% improve pdf rendering
\usepackage{mathtools}
\usepackage{dsfont}
\usepackage{physics}
\usepackage{framed}
\usepackage{color}
\usepackage{setspace}

%---- packages for tickz
\usepackage[usenames,dvipsnames]{pstricks}
\usepackage{epsfig}
\usepackage{pst-grad} % For gradients
\usepackage{pst-plot} % For axes
\usepackage[space]{grffile} % For spaces in paths
\usepackage{etoolbox} % For spaces in paths
\makeatletter % For spaces in paths
\patchcmd\Gread@eps{\@inputcheck#1 }{\@inputcheck"#1"\relax}{}{}
\makeatother

%---------- commands -------------
%%% fractions

%%% basic sets
\newcommand\N{\mathbb{N}}

\newcommand\R{\mathbb{R}}
\newcommand\Z{\mathbb{Z}}

\newcommand\Qhdt{Q_{h,\Delta t}}
\newcommand\Qdt{Q_{\Delta t}}
\newcommand\Lc{\mathcal{L}}

\newcommand\X{\mathcal{X}}
\newcommand\Kb{\mathbb{K}}

\newcommand\B{\mathcal{B}}
\newcommand\PX{\mathcal{P}(\X)}

\newcommand\e{\mathrm{e}}

\newcommand\Linfty{B^{\infty}}
\newcommand\id{\mathrm{Id}}
\newcommand\E{\mathbb{E}}

\newcommand\Dt{\Delta t}
\newcommand\intX{\int_{\X}}
\newcommand\hdt{h_{\Dt}}
\newcommand\lambdadt{\lambda_{\Dt}}

\newcommand\ind{\mathds{1}}
\newcommand\nb{{\bar{n}}}

\newcommand{\vertiii}[1]{{\left\vert\kern-0.25ex\left\vert\kern-0.25ex\left\vert #1 
    \right\vert\kern-0.25ex\right\vert\kern-0.25ex\right\vert}}

\DeclarePairedDelimiter\ceil{\lceil}{\rceil}

%------ Gabriel ---------
\renewcommand{\leq}{\leqslant}

\renewcommand{\geq}{\geqslant}

%---------Styles-------------
\newtheorem{theorem}{Theorem}
\newtheorem{lemma}{Lemma}
\newtheorem{assumption}{Assumption}
\newtheorem{prop}{Proposition}

\newtheorem{remark}{Remark}
\newtheorem{definition}{Definition}

%\iftrue    %--- corrige cache    ---
\iffalse    %--- corrige apparent ---
   \excludecomment{corrige}
\else
  
\fi

%graphics path
\graphicspath{{fig/}}

%%------------ debut document ------------------- 
\begin{document}

\title{More on the long time stability of Feynman--Kac semigroups}
\author{Grégoire Ferré$^{1}$, Mathias Rousset$^2$ and Gabriel Stoltz$^1$\\
\small (1) Université Paris-Est, CERMICS (ENPC), Inria, F-77455 Marne-la-Vallée, France \\
\small (2) INRIA Rennes -- Bretagne Atlantique  \& IRMAR Université Rennes 1, France
}

\date{\today}

\maketitle

%%%%%%%%%%%%%%%%%%%%%%%%%%%%%%%%%%%%%%%%%%%%%%%%%%%%%%%%%%%%%%%%%%%%%%%%%%%%%%%%%%%%%%%%%%%
\abstract{
  Feynman--Kac semigroups appear in various areas of mathematics: non-linear filtering, large deviations theory, spectral analysis of Schrödinger operators among others. Their long time behavior provides important information, for example in terms of ground state energy of Schrödinger operators, or scaled cumulant generating function in large deviations theory. In this paper, we propose a simple and natural extension of the stability analysis of Markov chains for these non-linear evolutions. As other classical ergodicity results, it relies on two assumptions: a Lyapunov condition that induces some compactness, and a minorization condition ensuring some mixing.  We show that these conditions are satisfied in a variety of situations, including stochastic differential equations. Illustrative examples are provided, where the stability of the non-linear semigroup arises either from the underlying dynamics or from the Feynman--Kac weight function.  We also use our technique to provide uniform in the time step convergence estimates for discretizations of stochastic differential equations.
}

\vspace{1cm}
\textbf{Key words} : Feynman--Kac dynamics; ergodicity; spectral analysis; large deviations.
%%%%%%%%%%%%%%%%%%%%%%%%%%%%%%%%%%%%%%%%%%%%%%%%%%%%%%%%%%%%%%%%%%%%%%%%%%%%%%%%%%%%%%%%%%%
\section{Introduction}
\label{sec:intro}

Feynman--Kac semigroups have a long history in physics and mathematics. One of their traditional
applications as a probabilistic representation of Schrödinger semigroups~\cite{karatzas2012brownian}
is the computation of ground state energies through Diffusion Monte Carlo
algorithms~\cite{grimm1971monte,anderson1975random,ceperley1980ground,foulkes2001quantum}.
It has also become a significant tool in non-linear filtering and genealogical
models~\cite{del2000branching,del2003particle,del2004feynman}, as well as in large deviations
theory~\cite{donsker1975variational,kontoyiannis2005large,giardina2006direct,wu2001large}.
In all these contexts, the dynamics is evolved and its paths are weighted depending on
some cost function. This function is typically a potential energy, a likelihood, or a function whose
fluctuations are of interest.

As for Markov chains, the long time behavior of such dynamics is important. However, the long-time
analysis is made difficult by the non-linear character of the evolution, so the methods used
for the stability of Markov chains~\cite{meyn2012markov,hairer2011yet} cannot be straightforwardly
adapted in this context. A series of papers~\cite{del2001stability,del2002stability,del2004feynman}
rely on the powerful Dobrushin ergodic
coefficient~\cite{dobrushin1956centralI,dobrushin1956centralII}. However, although this tool enables to
deal with the nonlinearity and to consider time-inhomogeneous processes, the conditions imposed
on the dynamics are not realistic for unbounded domains.

The purpose of this paper is to propose a new scheme of proof for the ergodicity of Feynman--Kac
dynamics, suitable for cases where the state space is unbounded.  It is based on 
the principal eigenvalue problem associated to a weighted evolution operator.
It then relies on studying a
$h$-transformed version of the dynamics~\cite{doob1957conditional}, where~$h$ is the eigenvector
associated to the eigenproblem. This turns the non-linear dynamics into a linear Markov
evolution, which can then be studied with standard techniques~\cite{meyn2012markov,hairer2011yet}.
However, the spectral properties of the generator fall out of the typical regime of self-adjoint
operators, since the dynamics is in general non-reversible. A striking fact of
our results is that, under Lyapunov and minorization conditions similar to
those of~\cite{hairer2011yet} stated for non-probabilistic kernels, we perform a non self-adjoint
spectral analysis that recasts the Feynman--Kac problem into the Markov chain
framework studied in~\cite{hairer2011yet}.

The works of Kontoyannis and Meyn~\cite{kontoyiannis2003spectral,kontoyiannis2005large} provide
elements of answer concerning the spectral properties of the evolution operator, and
rely on a nonlinear Lyapunov condition and a regularity in terms
of hitting times. If the latter Lyapunov condition is natural in terms of optimal stochastic
control~\cite{fleming1977exit}, we propose instead proofs based on linear conditions.
Our generalized linear Lyapunov condition is inspired by~\cite{bellet2006ergodic}, and comes
together with a minorization condition and a local strong Feller assumption. We will see that these
conditions apply to a variety of situations,
with natural interpretations. From a broader perspective, it appears as a natural extension
of previous works on the stability of Markov chains~\cite{hairer2011yet} for
evolution kernels that do not conserve probability. To that extent, our work resonates with recent
works on Quasi-Stationary Distributions
(QSD)~\cite{gosselin2001asymp,champagnat2017lyapunov,champagnat2017general,bansaye2017ergodic}. However, our
scope and assumptions being different, we leave the comparison for future studies.
Let us also mention that our framework applies for both discrete and continuous time processes.
This is interesting since one motivation for this work is to understand the behavior of time
discretizations of continuous Feynman--Kac dynamics, as in~\cite{ferre2017error}.

Let us outline our main results in an informal way. The quantities we are interested in
typically correspond to Markov chains~$(x_k)_{k\geq 0}$ over a state space~$\X$, whose
trajectories are weighted by a function  $f:\X\to \R$. This corresponds to semigroups of the form
\begin{equation}
  \label{eq:feynmankac}
\Phi_k(\mu)(\varphi)=\frac{\E \left[ \varphi(x_k)\, \e^{ \sum_{i=0}^{k-1} f(x_i)} \ \Big|\
    x_0\sim \mu \right]}
{\E \left[ \e^{\sum_{i=0}^{k-1} f(x_i)} \ \Big|\
    x_0\sim \mu \right]},
\end{equation}
where~$\mu$ is an initial probability distribution, and~$\varphi$ is a test function. 
We show that, for more general semigroups and under some assumptions on~$(x_k)_{k\geq 0}$
and~$f$, there exists a measure~$\mu_f^\star$ such that for any initial measure~$\mu$ and
any~$\varphi$ belonging to a particular class of unbounded test functions,
\begin{equation}
  \label{eq:ergoexample}
\Phi_k(\mu)(\varphi) \xrightarrow[k\to+\infty]{} \mu_f^\star(\varphi),
\end{equation}
at an exponential rate. As a corollary of this result, we show that the principal
eigenvalue~$\Lambda$ of the generator of the dynamics~$( \Phi_k)_{k\geq 1}$ can be
obtained as the following limit, for any initial measure~$\mu$ and suitable functions~$f$:
\[
\log(\Lambda) = \underset{k\to+\infty}{\lim}\ \frac{1}{k}\log \E \left[ \e^{\sum_{i=0}^{k-1}
    f(x_i)} \ \Big|\ x_0\sim \mu \right],
\]
  a quantity sometimes called scaled cumulant generating function in large deviations
  theory~\cite{dembo2010large,kontoyiannis2005large}. 
  Another natural situation corresponds to continuous semigroups of the form
  \begin{equation}
    \label{eq:excontinuous}
\Theta_t(\mu)(\varphi)  =
\frac{\E\left[ \varphi(X_t)\, \e^{\int_0^t f(X_s)\, ds} \ \Big|\ X_0\sim \mu \right]}
     {\E\left[ \e^{ \int_0^t f(X_s)\, ds} \ \Big|\ X_0\sim \mu \right]},
     \end{equation}
where~$(X_t)_{t\geq 0}$ is typically a diffusion process.
Results similar to the ones obtained in the discrete time setting are then derived
for this continuous dynamics. We will see that
ergodic properties such as~\eqref{eq:ergoexample} are proved under natural extensions of
Lyapunov and minorization conditions, which should be reminiscent of the corresponding theory for
Markov chains~\cite{hairer2011yet,bellet2006ergodic}, with additional regularity
conditions.

The paper is organized as follows. In Section~\ref{sec:results}, we present
our main results on the stability of Feynman--Kac semigroups. Section~\ref{sec:discrete} is
devoted to discrete time results, while Section~\ref{sec:continuous} is concerned with
the continuous time case. Section~\ref{sec:applications} presents a number of natural
applications of the method. In particular, Section~\ref{sec:discretization} provides
uniform in the time step convergence estimates. Section~\ref{sec:discussion} discusses
some links with related works and possible further directions.

%%%%%%%%%%%%%%%%%%%%%%%%%%%%%%%%%%%%%%%%%%%%%%%%%%%%%%%%%%%%%%%%%%%%%%%%%%%%%%%%%%%%%%%%%%%%%%
%%%%%%%%%%%%%%%%%%%%%%%%%%%%%%%%%%%%%%%%%%%%%%%%%%%%%%%%%%%%%%%%%%%%%%%%%%%%%%%%%%%%%%%%%%%

\section{Results}
\label{sec:results}

\subsection{Framework}
In this section, we present our main convergence results for generalizations of the
dynamics~\eqref{eq:feynmankac}. The state space~$\X$ is assumed to be a Polish space, and
for a measurable set $A\subset\X$, we denote by~$A^c$ its complement, and~$\ind_A$ its indicator
function. For a Banach space~$E$, we denote by~$\mathcal{B}(E)$ the space of bounded linear
operators over~$E$, with associated norm
$\| T \|_{\mathcal{B}(E)}=\sup\, \{ \| Tu\|_E,\, \|u\|_E\leq 1 \}$.  The Banach space
of continuous functions is called~$C^0(\X)$, and the Banach space of measurable
functions~$\varphi$ such that
\[
\| \varphi \|_{\Linfty} := \underset{x\in\X}{\sup}\ | \varphi(x) | < +\infty
\]
is referred to as~$\Linfty(\X)$. Given a measure~$\mu$ over~$\X$ with finite mass,
we use the notation $\mu(\varphi)= \intX \varphi(x) \mu(dx)$ for $\varphi\in \Linfty(\X)$.
The spaces of positive measures and probability measures over~$\X$ are denoted respectively
by~$\mathcal{M}(\X)$ and~$\PX$. When we consider Markov chains~$(x_k)_{k\in\N}$ over~$\X$, we
write~$\E_{\mu}$ for the expectation over all the realizations of the
Markov chain with initial condition distributed according to the probability measure~$\mu$.
Appendix~\ref{sec:tools} is devoted to
reminders on the ergodicity of Markov chains extracted from~\cite{hairer2011yet},
while Appendix~\ref{sec:krein} recalls some useful definitions and theorems used in the proofs
of the results of this section.

We consider general kernel operators~$Q^f$ over~$\X$, \textit{i.e.} such that for
any $x\in\X$,~$Q^f(x,\cdot)$ is a
positive measure with finite mass (\textit{i.e.}~$Q^f\ind (x) < +\infty$), and for
any measurable set $A\subset\X$,~$Q^f(\cdot,A)$
is a measurable function. Such a kernel is referred to as Markov (also probabilistic or
conserving) when $Q^f \ind = \ind$. The notation~$Q^f$ instead of~$Q$ emphasizes that in general
the function $Q^f\ind\neq \ind$ depends on a measurable function $f:\X\to\R$.
For $\varphi\in\Linfty(\X)$, we denote by $Q^f\varphi= \int_{\X} \varphi(y)Q^f(\cdot,dy)$
the action of~$Q^f$ on test functions, and by $\mu Q^f=\int_{\X}\mu(dx)Q^f(x,\cdot)$ its action
on finite measures~$\mu$. We call Feynman--Kac semigroups
the dynamics~$( \Phi_k)_{k\geq 1}$ defined as follows:
\begin{equation}
  \label{eq:dynamics}
\forall\, k\geq 1, \quad   \forall\, \mu\in\PX,\quad \forall\, \varphi\in \Linfty(\X), \quad
  \Phi_k(\mu)(\varphi)= \frac{\mu\big( (Q^f)^k \varphi\big)}{\mu\big( (Q^f)^k
    \ind \big)}.
\end{equation}
Note that $\Phi_k = \Phi \circ \hdots \circ \Phi$, where~$\Phi$ is the one step evolution
operator $\Phi:\PX\to\PX$:
\begin{equation}
  \label{eq:onestep}
  \forall\, \mu\in\PX,\quad \forall\,
  \varphi\in \Linfty(\X), \quad
  \Phi(\mu)(\varphi) = \frac{\mu\big( Q^f \varphi \big) } { \mu\big( Q^f \ind \big)},
\end{equation}
which is well-defined as soon as~$\mu(Q^f\ind) >0$ for any $\mu\in\PX$. Lemma~\ref{lem:preliminary}
below proves that~\eqref{eq:onestep} is indeed well-defined under the assumptions presented in
Section~\ref{sec:discrete}.

Although~$Q^f$ is not probabilistic, the normalizing factor in~\eqref{eq:onestep} ensures
that~$\Phi$ evolves a positive measure of finite mass into a probability measure. An important 
motivation for studying the general dynamics~\eqref{eq:onestep} is that~\eqref{eq:feynmankac} can
be written in the form~\eqref{eq:dynamics} with $Q^f=\e^{f}Q$, where~$Q$ is the transition operator
of the Markov chain~$(x_k)_{k\in\N}$. In this typical setting, $Q^f\ind = \e^{f}$. Even
when~$Q^f$ is not defined in this way (see for instance the continuous time
situation~\eqref{eq:contQP} considered in Section~\ref{sec:continuous}), we keep the
notation to emphasize that~$Q^f$ typically corresponds
to a Markov dynamics whose trajectories are weighted by a function~$f$.

\subsection{Results in discrete time}
\label{sec:discrete}
We now introduce the assumptions ensuring the well-posedness and ergodicity of the
semigroup~\eqref{eq:dynamics}, which should be reminiscent of the ones used
in~\cite{hairer2011yet,bellet2006ergodic}
for showing the ergodicity of Markov chains. The first step of the proof is the existence
of a principal eigenvector~$h$ for~$Q^f$, as shown in Lemma~\ref{lem:specQV}. This eigenvector
is used in Lemma~\ref{lem:Qh} to study a $h$-transformed version of~$Q^f$, which leads to our
main result, Theorem~\ref{theo:general}.  Note that, in practice, we have in mind the situation
 $\X=\R^d$ for $d\in\N^*$, but discrete spaces like $\X=\Z^d$ can also be considered, in which case
the framework may be simplified.

The first assumption is that a generalized Lyapunov condition holds. We will see in
Section~\ref{sec:applications} that it is satisfied for a large class of processes.
In all this section, we consider an increasing sequence of compact
sets~$(K_n)_{n\geq 1}$ such that, for any compact $K\subset\X$, there exists $m\geq 1$
for which $K \subset K_m$.

\begin{assumption}[Lyapunov condition]
  \label{as:lyapunovgeneral}
  
There exist a function $W:\X\to [1, +\infty)$ bounded on compact sets, and positive
  sequences~$(\gamma_n)_{n\geq 1}$, $(b_n)_{n\geq 1}$ with $\gamma_n\to 0$ as $n\to +\infty$
  such that, for all $n\geq 1$,
\begin{equation}
\label{eq:lyapunovgeneral}
  Q^f W \leq \gamma_n W + b_n\ind_{K_n}.
\end{equation} 
\end{assumption}
Let us mention that, in many situations, the function~$W$ has compact level sets, so that
a natural choice of compact sets is~$K_n=\{ x\in\X\,|\, W(x) \leq n\}$.
When a Lyapunov function~$W$ exists, it is natural~\cite{hairer2011yet} to consider the following
functional space
\begin{equation}
  \label{eq:LW}
\Linfty_W(\X) = \Big\{ \varphi \mbox{ measurable}, \ \ \left\| \frac{\varphi}{W} \right\|_{\Linfty}
<  + \infty \Big\}.
\end{equation}
In particular, Assumption~\ref{as:lyapunovgeneral} implies that~$Q^f$ is a bounded operator
on~$\Linfty_W(\X)$, since one can show that
\[
\forall\, n\geq 1, \quad
\|Q^f\|_{\mathcal{B}(\Linfty_W)}\leq \gamma_n +b_n.
\]
We next assume that the following minorization condition holds.
%%
% \forall\, x \in K_n, \quad
\begin{assumption}[Minorization and irreducibility]
  \label{as:minogeneral}
  For any $n\geq 1$, there exist $\eta_n \in \PX$ and $\alpha_n >0$ such that
  \begin{equation}
    \label{eq:minogeneral}
    \inf_{x\in K_n}\,     Q^f(x, \cdot) \geq \alpha_n \eta_n(\, \cdot\,).
  \end{equation}
  In addition, for any $n_0\geq 1$ and any $\varphi\in\Linfty_W(\X)$ with $\varphi\geq 0$,
  \begin{equation}
    \label{eq:irreducibility}
    \eta_{n}(\varphi) = 0, \, \forall\, n\geq n_0 \ \Longrightarrow \
    \big(Q^f\varphi\big)(x) = 0,\, \forall\,x\in\X.
  \end{equation}
\end{assumption}
Note that~\eqref{eq:irreducibility} expresses some form of irreducibility with
respect to the minorizing measures. It can be reformulated in the following way: for
any~$n_0\geq 1$ and any~$x\in\X$, $Q^f(x,\cdot)$ is absolutely continuous with
respect to the measure
\[
\sum_{n\geq n_0} 2^{-n}\eta_n.
\]
The typical situation for $\X=\R^d$ is
to choose $\eta_n(dx) = \ind_{K_n}(x)dx/|K_n|$, where~$|K_n|$ denotes the Lebesgue
measure of~$K_n$. We also mention that, although we will consider the previous minorization
measures~$\eta_n$ in our examples in Section~\ref{sec:applications}, the first part of
Assumption~\ref{as:minogeneral} can be obtained using irreducibility
together with a strong Feller property, see~\cite{hairer2001exponential}, or through
the Stroock--Varadhan support theorem~\cite{stroock1972support} with some regularity
property, see the discussion in~\cite{bellet2006ergodic}.
In our context, we also need some local regularity for the operator~$Q^f$.
\begin{assumption}[Local regularity]
\label{as:regularity}
The operator~$Q^f$ is strong Feller on the compact sets~$K_n$, \textit{i.e.} for any $n\geq 1$
and any measurable function~$\varphi$ bounded on~$K_n$, $Q^f(\varphi\ind_{K_n})$ is continuous
over~$K_n$.
\end{assumption}

From these assumptions we first state the following preliminary lemma, whose proof can be found
in Appendix~\ref{sec:preliminary}.
\begin{lemma}
  \label{lem:preliminary}
  Let~$Q^f$ satisfy Assumptions~\ref{as:lyapunovgeneral} and~\ref{as:minogeneral}.
  Then, for any $\mu\in \PX$ with $\mu(W)<+\infty$, one has
  \begin{equation}
    \label{eq:positivity}
  0< \mu(Q^f \ind ) < + \infty.
  \end{equation}
  Moreover, for any $n\geq 1$, it holds $1 \leq \eta_n(W) < +\infty$, and
  there exist infinitely many indices $\nb \geq 1$ such that
  \begin{equation}
    \label{eq:nbar}
  \eta_\nb(K_\nb) >0.
  \end{equation}
\end{lemma}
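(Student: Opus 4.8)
The plan is to establish the three assertions in turn, the common engine being that $(K_n)_{n\ge1}$ exhausts $\X$. Since $\X$ is Polish, every singleton is compact and hence contained in some $K_m$, so $\X=\bigcup_{n\ge1}K_n$; consequently $\mu(K_n)\uparrow\mu(\X)=1$ for every $\mu\in\PX$ by continuity from below. Applying the minorization~\eqref{eq:minogeneral} with $\varphi=\ind$ gives $Q^f\ind(x)\ge\alpha_n\eta_n(\X)=\alpha_n>0$ for $x\in K_n$, hence $Q^f\ind>0$ everywhere on $\X$; this strict positivity is the crux of both~\eqref{eq:positivity} and~\eqref{eq:nbar}.

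For~\eqref{eq:positivity}, the upper bound follows from $\ind\le W$ and positivity of $Q^f$: one has $\mu(Q^f\ind)\le\mu(Q^fW)$, and integrating~\eqref{eq:lyapunovgeneral} (with an arbitrary index $n$) against $\mu$ yields $\mu(Q^fW)\le\gamma_n\mu(W)+b_n<+\infty$ since $\mu(W)<+\infty$ and $\mu$ is a probability measure. For the lower bound I would integrate the pointwise inequality $Q^f\ind\ge\alpha_n\ind_{K_n}$, obtaining $\mu(Q^f\ind)\ge\alpha_n\mu(K_n)$, which is positive as soon as $n$ is large enough that $\mu(K_n)>0$.

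The bound $\eta_n(W)\ge1$ is immediate from $W\ge1$ and $\eta_n\in\PX$. For $\eta_n(W)<+\infty$ I would evaluate both structural bounds at a point $x\in K_n$: integrating $W\ge0$ against~\eqref{eq:minogeneral} gives $Q^fW(x)\ge\alpha_n\eta_n(W)$, while~\eqref{eq:lyapunovgeneral} gives $Q^fW(x)\le\gamma_nW(x)+b_n\le\gamma_n\sup_{K_n}W+b_n$, which is finite because $W$ is bounded on the compact $K_n$; dividing by $\alpha_n$ concludes.

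Finally, for~\eqref{eq:nbar} I would argue by contradiction: suppose $\eta_n(K_n)=0$ for all $n\ge n_0$. For each fixed $m\ge1$ and every $n\ge\max(m,n_0)$, monotonicity gives $\eta_n(K_m)\le\eta_n(K_n)=0$, so the nonnegative function $\varphi=\ind_{K_m}\in\Linfty_W(\X)$ satisfies $\eta_n(\varphi)=0$ for all $n$ beyond $\max(m,n_0)$; the irreducibility implication~\eqref{eq:irreducibility} then forces $Q^f\ind_{K_m}\equiv0$ on $\X$. Since this holds for every $m$ and $K_m\uparrow\X$, monotone convergence gives $Q^f\ind\equiv0$, contradicting the strict positivity of $Q^f\ind$ from the first step; hence infinitely many indices satisfy $\eta_{\nb}(K_{\nb})>0$. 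I expect this last argument---the repeated use of~\eqref{eq:irreducibility} over a moving window of indices, followed by the limit $m\to+\infty$---to be the only genuinely delicate point, and one should also dispatch the harmless case of empty $K_n$ (on which the minorization is vacuous) at the outset; the remaining steps are routine manipulations of the Lyapunov and minorization inequalities.
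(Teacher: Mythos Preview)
Your proof is correct and follows essentially the same approach as the paper: the Lyapunov and minorization inequalities handle~\eqref{eq:positivity} and the finiteness of $\eta_n(W)$, while the contradiction for~\eqref{eq:nbar} runs through the irreducibility condition~\eqref{eq:irreducibility} applied to $\varphi=\ind_{K_m}$. The only stylistic differences are that the paper invokes tightness of probability measures on Polish spaces (their Lemma~\ref{lem:tight}) rather than your direct observation $\X=\bigcup_n K_n$ from singleton compactness, and that in the final step the paper fixes a single large $m$ and shows $Q^f\ind_{K_m}\neq 0$ via the minorization at level $n_0$, whereas you pass to the limit $m\to+\infty$ and contradict $Q^f\ind>0$; both routes are equally valid and equally short.
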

The lower bound in~\eqref{eq:positivity} implies in particular that the
dynamics~\eqref{eq:dynamics} is well-defined. The inequality~\eqref{eq:nbar}
means that, for infinitely many minorization conditions, some mass of the minorizing
measure remains in the associated compact set. It is used in the proof of Lemma~\ref{lem:specQV} to
show that~$Q^f$ has a positive spectral radius. Since~\eqref{eq:nbar} is satisfied for
infinitely many indices, we could consider that it holds for any~$n\geq 0$,
upon extracting a subsequence and, in the situations considered in
Section~\ref{sec:applications}, we can actually check that $\eta_n(K_n)>0$ for all
$n\geq 1$.

We are now in position to state some spectral properties of the operator~$Q^f$,
which are a key ingredient for our analysis.
Let us recall that the spectral radius of~$Q^f$ on~$\Linfty_W(\X)$, denoted by
$\Lambda:=\Lambda(Q^f)$, is given by the Gelfand formula~\cite{nussbaum1998eigen}:
\begin{equation}
  \label{eq:gelfand}
  \Lambda=\underset{k\to +\infty} {\lim} \,
  \big\| (Q^f)^k \big\|_{\mathcal{B}(\Linfty_W)}^{\frac{1}{k}},
\end{equation}
and that the essential spectral radius of~$Q^f$, denoted by~$\theta(Q^f)$, reads
(see Appendix~\ref{sec:krein}):
  \[
  \theta(Q^f) = \underset{k\to+\infty}{\lim}\, \Big(\inf \big\{ \big\| (Q^f)^k - T \big\|_{\mathcal{B}(\Linfty_W)},\ T
  \ \mathrm{ compact} \big\}\Big)^{\frac{1}{k}}.
  \]

\begin{lemma}
\label{lem:specQV}
Under Assumptions~\ref{as:lyapunovgeneral},~\ref{as:minogeneral} and~\ref{as:regularity},
the operator~$Q^f$ considered on~$\Linfty_W(\X)$ has a zero essential spectral radius,
admits its spectral radius $\Lambda >0$ as a largest eigenvalue (in modulus), and has an associated
eigenfunction $h\in\Linfty_W(\X)$, normalized so that $\| h \|_{\Linfty_W} = 1$,
and which satisfies
\begin{equation}
  \label{eq:muestimate}
  \forall\, x\in\X, \quad 0 < h(x) < +\infty.
\end{equation}
In particular, $0<\eta_{n}(h)<+\infty$ for all $n\geq 1$.
\end{lemma}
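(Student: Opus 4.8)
The plan is to realize $Q^f$ as a positive, quasi-compact operator on the Banach lattice $\Linfty_W(\X)$ (lattice-isomorphic to $\Linfty(\X)$ through $\varphi\mapsto\varphi/W$) and then to extract the conclusions from a Krein--Rutman type statement as recalled in Appendix~\ref{sec:krein}. The first task is to show $\theta(Q^f)=0$, which I would do via a Doeblin--Fortet / Hennion-type decomposition. Fix $n$ and $m\ge n$ and write $Q^f=B_{n,m}+A_{n,m}$ with
\[
B_{n,m}\varphi=\ind_{K_n}\,Q^f\!\big(\varphi\ind_{K_m}\big),\qquad
A_{n,m}\varphi=\ind_{K_n^c}\,Q^f\varphi+\ind_{K_n}\,Q^f\!\big(\varphi\ind_{K_m^c}\big).
\]
From~\eqref{eq:lyapunovgeneral} one gets $|Q^f\varphi|\le\|\varphi\|_{\Linfty_W}(\gamma_nW+b_n\ind_{K_n})$, hence on $K_n^c$ the bound $|Q^f\varphi|\le\gamma_nW\|\varphi\|_{\Linfty_W}$, which together with $|Q^f(\varphi\ind_{K_m^c})|\le\|\varphi\|_{\Linfty_W}Q^f(W\ind_{K_m^c})$ yields $\|A_{n,m}\|_{\mathcal B(\Linfty_W)}\le\gamma_n+\sup_{x\in K_n}Q^f(W\ind_{K_m^c})(x)$. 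On the other hand, Assumption~\ref{as:regularity} and Arzelà--Ascoli make $B_{n,m}$ a compact operator (it factors through $C(K_n)\hookrightarrow\Linfty_W(\X)$). Since $\theta(Q^f)\le\inf\{\|Q^f-T\|_{\mathcal B(\Linfty_W)}:T\text{ compact}\}\le\|A_{n,m}\|_{\mathcal B(\Linfty_W)}$, letting $m\to\infty$ (so that the tail term vanishes) and then $n\to\infty$ gives $\theta(Q^f)=0$.

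Next I would prove $\Lambda>0$ and exhibit the eigenpair. Taking $\nb$ as in~\eqref{eq:nbar} and iterating the minorization~\eqref{eq:minogeneral} on $K_\nb$ gives, by induction, $(Q^f)^k\ind(x)\ge\alpha_\nb^{\,k}\,\eta_\nb(K_\nb)^{k-1}$ for all $x\in K_\nb$; hence $\|(Q^f)^k\|_{\mathcal B(\Linfty_W)}\ge\alpha_\nb^{\,k}\eta_\nb(K_\nb)^{k-1}/\sup_{K_\nb}W$, and the Gelfand formula~\eqref{eq:gelfand} yields $\Lambda\ge\alpha_\nb\,\eta_\nb(K_\nb)>0$. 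Now $Q^f$ is a positive operator leaving invariant the cone $\{\varphi\in\Linfty_W(\X):\varphi\ge0\}$, which has nonempty interior ($W$ is interior, since $\|\psi-W\|_{\Linfty_W}<1$ forces $\psi>0$); being moreover quasi-compact with $\Lambda>\theta(Q^f)=0$, the number $\Lambda$ is a pole of the resolvent of $Q^f$, hence an eigenvalue, and it is the largest in modulus because $\Lambda$ is the spectral radius. By Krein--Rutman it admits an eigenfunction $h\in\Linfty_W(\X)$ with $h\ge0$, $h\not\equiv0$, which we normalize so that $\|h\|_{\Linfty_W}=1$.

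It then remains to upgrade $h\ge0$ to $0<h<+\infty$ and to establish $0<\eta_n(h)<+\infty$. Set $S:=\{h>0\}$. From $\Lambda h=Q^fh$ and $h\equiv0$ on $S^c$, one gets $\int_S h\,dQ^f(x,\cdot)=\Lambda h(x)$, so that $Q^f(x,S)=0$ for $x\in S^c$ (the set $S^c$ is absorbing) while $Q^f(x,S)>0$ for $x\in S$, in particular $Q^f\ind_S\not\equiv0$. Applying the irreducibility part of Assumption~\ref{as:minogeneral} with $\varphi=\ind_S$, for every $n_0$ there is $n\ge n_0$ with $\eta_n(S)>0$; for such $n$, \eqref{eq:minogeneral} forces $Q^f(x,S)\ge\alpha_n\eta_n(S)>0$ on $K_n$, which is incompatible with $Q^f(\cdot,S)\equiv0$ on $S^c$ unless $K_n\subset S$. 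As this holds along a cofinal sequence of indices and $\bigcup_nK_n=\X$ (every singleton is compact, hence contained in some $K_m$), we conclude $S=\X$, i.e. $0<h(x)<+\infty$ for all $x$. Finally $\eta_n(h)\le\|h\|_{\Linfty_W}\,\eta_n(W)<+\infty$ by Lemma~\ref{lem:preliminary}, and $\eta_n(h)>0$ since $h>0$ everywhere and $\eta_n\in\PX$.

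The step I expect to be the main obstacle is the quasi-compactness of Step~1. There are really two points to settle there: that the truncated operator $B_{n,m}$ is compact on $\Linfty_W(\X)$ --- this is where the local regularity of $Q^f$ on the $K_n$ is used, via equicontinuity on $K_n$ of the family $\{Q^f(\varphi\ind_{K_m}):\|\varphi\|_{\Linfty_W}\le1\}$ --- and the uniform tail estimate $\sup_{x\in K_n}Q^f(W\ind_{K_m^c})(x)\to0$ as $m\to\infty$. The Lyapunov bound and monotone convergence only give this last convergence pointwise in $x$; turning it into a bound uniform over the compact $K_n$ seems to require a Dini-type argument, using that $x\mapsto Q^f(W\ind_{K_m})(x)$ is continuous on $K_n$ by Assumption~\ref{as:regularity} and increases to $Q^fW$. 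Everything else is a matter of feeding the Lyapunov, minorization and irreducibility hypotheses into classical spectral theory of positive operators.
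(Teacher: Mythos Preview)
Your outline for Steps~2--4 is sound and close to the paper's. The lower bound $\Lambda\ge\alpha_{\nb}\eta_{\nb}(K_{\nb})$ is obtained in the paper exactly as you sketch, and your positivity argument via the absorbing set $S^c=\{h=0\}$ is a clean variant of the paper's contradiction (the paper instead shows directly that $h(x_0)=0$ forces $\eta_n(h)=0$ for all large $n$, then invokes~\eqref{eq:irreducibility} to get $Q^fh\equiv0$). Both routes use the same ingredients.

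The real problem is Step~1, and it is precisely the obstacle you flag but do not resolve. You claim that Assumption~\ref{as:regularity} together with Arzel\`a--Ascoli makes $B_{n,m}$ compact, via equicontinuity of $\{Q^f(\varphi\ind_{K_m}):\|\varphi\|_{\Linfty_W}\le1\}$ on $K_n$. But the strong Feller property only tells you that \emph{each} function $Q^f(\varphi\ind_{K_m})$ is continuous on $K_n$; it does \emph{not} give a uniform modulus of continuity as $\varphi$ ranges over the unit ball. Without that, Ascoli does not apply and $B_{n,m}$ need not be compact. This is not a technicality: there are strong Feller kernels whose single application fails to be compact on $\Linfty$. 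The paper's fix is to iterate: it decomposes $(Q^f)^3$ rather than $Q^f$, and shows that the \emph{square} $(\ind_{K_n}Q^f\ind_{K_n})^2$ is compact. The key step is Lemma~\ref{lem:PQ}, which says that the composition of two strong Feller kernels is ultra-Feller (continuous in total variation); ultra-Feller on the compact $K_n$ plus Heine--Cantor then yields exactly the equicontinuity you need for Ascoli. Working with $(Q^f)^3$ also makes the remainder terms of the form $\cdots\ind_{K_n^c}Q^f\cdots$, whose norm is bounded by $\gamma_n$ directly from~\eqref{eq:lyapunovgeneral}, so no Dini argument is required.

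On that Dini point: your proposed route needs $Q^fW$ to be continuous on $K_n$ for Dini to apply (monotone pointwise convergence of continuous functions to a continuous limit). Assumption~\ref{as:regularity} only gives continuity of $Q^f(\psi\ind_{K_m})$ for bounded $\psi$, so $Q^fW$ is a priori merely lower semicontinuous on $K_n$, and Dini may fail. The paper sidesteps this entirely by the decomposition of $(Q^f)^3$ just described.
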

Note that the eigenspace associated with~$\Lambda$ is a priori not of dimension one.
We prove Lemma~\ref{lem:specQV} in Appendix~\ref{sec:specQV} by using arguments
inspired by~\cite[Theorem 8.9]{bellet2006ergodic} to show that the essential spectral radius
of~$Q^f$ is zero, and then relying on the theory of positive operators~\cite{deimling2010nonlinear}.
Some useful elements of operator theory are reminded in Appendix~\ref{sec:krein} for the reader's
convenience. Our result is close to those obtained in~\cite{kontoyiannis2005large}, and the
control of the essential spectral radius under Lyapunov and topological
conditions has already been studied in~\cite{wu2004essential,guibourg2011quasi}.
However, our proof uses different techniques based on different assumptions.

Once such a principal eigenvector~$h$ is available, the geometric ergodicity of the Feynman--Kac
dynamics~\eqref{eq:dynamics} is derived from the one of a $h$-transformed kernel, as
made clear in the proof of Theorem~\ref{theo:general} below. This
is the purpose of the next lemma whose proof is postponed to Appendix~\ref{app:Qh}.

\begin{lemma}
  \label{lem:Qh}
  Suppose that Assumptions~\ref{as:lyapunovgeneral},~\ref{as:minogeneral} and~\ref{as:regularity}
  hold, and consider an eigenvector~$h$ associated with~$\Lambda$ as given by
  Lemma~\ref{lem:specQV}. Since~$h>0$ we can define the corresponding $h$-transformed
  operator~$Q_h$ as
  \begin{equation}
    \label{eq:Qh}
    Q_h\phi= \Lambda^{-1}h^{-1} Q^f( h\phi).
  \end{equation}
  Then~$Q_h$ is a Markov operator with Lyapunov function $Wh^{-1}:\X\to [1,+\infty)$. Moreover,
  there exist a unique $\mu_h\in\PX$, which satisfies $\mu_h(Wh^{-1})<+\infty$, and constants $c>0$,
  $\bar{\alpha}\in (0,1)$ such that, for any $\phi \in \Linfty_{W h^{-1}}(\X)$ and any $k\geq 1$,
  \begin{equation}
    \label{eq:cvQh}
     \big\| Q_h^k\phi - \mu_h(\phi) \big\|_{\Linfty_{Wh^{-1}}} \leq c \bar{\alpha}^k
     \big\| \phi - \mu_h(\phi) \big\|_{\Linfty_{Wh^{-1}}}.
     \end{equation}
\end{lemma}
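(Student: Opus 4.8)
The plan is to verify that $Q_h$ satisfies the drift and minorization hypotheses of the Harris-type theorem recalled in Appendix~\ref{sec:tools}, applied with the weight $V:=Wh^{-1}$. That $Q_h$ is a Markov operator is immediate: it preserves positivity since $h>0$ and $Q^f$ is a positive kernel, and $Q_h\ind=\Lambda^{-1}h^{-1}Q^f h=\ind$ by the eigenrelation of Lemma~\ref{lem:specQV}. Writing $Q_h(x,dy)=\Lambda^{-1}h(x)^{-1}h(y)\,Q^f(x,dy)$ and using $h>0$, one sees that $Q_h(x,\cdot)$ and $Q^f(x,\cdot)$ have the same null sets, so the irreducibility property~\eqref{eq:irreducibility} passes to $Q_h$ unchanged; this is what delivers uniqueness of the invariant measure.

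The technical core is a two-sided control of $h$ on the sets $K_n$. The normalization $\|h\|_{\Linfty_W}=1$ gives $h\le W$ on $\X$, hence $V=Wh^{-1}\ge 1$, and $V$ is bounded on each $K_n$ once $h$ is bounded below there. Applying the minorization of Assumption~\ref{as:minogeneral} to $h=\Lambda^{-1}Q^f h$ yields, for $x\in K_n$, the lower bound $h(x)\ge \ell_n:=\Lambda^{-1}\alpha_n\eta_n(h)>0$ (recall $0<\eta_n(h)<+\infty$ by Lemmas~\ref{lem:preliminary} and~\ref{lem:specQV}), while on $K_n$ one has $h\le u_n:=\sup_{K_n}W<+\infty$. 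Finally, for $x\notin K_n$ Assumption~\ref{as:lyapunovgeneral} gives $(Q^f W)(x)\le \gamma_n W(x)$, so $h(x)\le \Lambda^{-1}(Q^f W)(x)\le \Lambda^{-1}\gamma_n W(x)$, that is $V(x)\ge \Lambda/\gamma_n$; since $\gamma_n\to 0$, this shows every sublevel set $\{V\le R\}$ is contained in some $K_{n(R)}$.

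The drift condition then follows from a one-line computation: $Q_h V=\Lambda^{-1}h^{-1}Q^f W\le \Lambda^{-1}\gamma_n V+\Lambda^{-1}b_n\ell_n^{-1}\ind_{K_n}$, using Assumption~\ref{as:lyapunovgeneral} and the lower bound on $h$ over $K_n$; fixing $n=n^\star$ with $\kappa:=\Lambda^{-1}\gamma_{n^\star}<1$ gives $Q_h V\le \kappa V+\tilde b$ on $\X$ with $\tilde b:=\Lambda^{-1}b_{n^\star}\ell_{n^\star}^{-1}$, which is exactly the assertion that $Wh^{-1}$ is a Lyapunov function for $Q_h$. For the minorization, for $x\in K_n$ one bounds $Q_h(x,dy)\ge \Lambda^{-1}u_n^{-1}\alpha_n\,h(y)\,\eta_n(dy)=\tilde\alpha_n\,\nu_n(dy)$, with $\nu_n:=\eta_n(h)^{-1}\,h\,\eta_n\in\PX$ and $\tilde\alpha_n:=\Lambda^{-1}u_n^{-1}\alpha_n\eta_n(h)>0$; combined with the containment $\{V\le R\}\subset K_{n(R)}$ this provides a minorization on the sublevel set $\{V\le R\}$ for $R$ as large as needed, in particular above the threshold imposed by $\kappa$ and $\tilde b$ in the Harris theorem. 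Invoking that theorem (uniqueness coming from~\eqref{eq:irreducibility}) yields the unique $\mu_h\in\PX$, with $\mu_h(V)<+\infty$ built into its conclusion, together with the geometric estimate~\eqref{eq:cvQh}.

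The main obstacle is this matching of ``compact set'' to ``sublevel set'': Assumption~\ref{as:minogeneral} only minorizes $Q^f$ on the $K_n$, whereas the Harris theorem wants a minorization on a sublevel set of the Lyapunov function, so the implication $\{V\le R\}\subset K_{n(R)}$ — itself a consequence of the Lyapunov bound outside $K_n$ together with $\gamma_n\to 0$ — is what makes the argument close. A secondary bookkeeping point is to check that the index $n^\star$ chosen to force $\kappa<1$ still leaves room to take the minorization set large enough; this uses that $\gamma_n$ is arbitrarily small while $b_n\ell_n^{-1}$ stays finite for each fixed $n$.
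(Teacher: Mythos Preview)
Your proof is correct and follows essentially the same route as the paper's: derive the lower bound $h\ge \Lambda^{-1}\alpha_n\eta_n(h)$ on $K_n$ from the minorization and eigenrelation, plug Assumption~\ref{as:lyapunovgeneral} into $Q_h(Wh^{-1})=\Lambda^{-1}h^{-1}Q^fW$ to get the drift, show $Wh^{-1}\ge \Lambda/\gamma_n$ outside $K_n$ so that sublevel sets sit inside some $K_m$, transfer the minorization to $Q_h$ on $K_n$, and apply Theorem~\ref{theo:HM}. Two minor remarks: the paper uses $\sup_{K_n}h$ rather than your $u_n=\sup_{K_n}W$ in the minorization constant (your bound is slightly looser but equally valid since $h\le W$), and your parenthetical ``uniqueness coming from~\eqref{eq:irreducibility}'' is unnecessary---Theorem~\ref{theo:HM} already delivers uniqueness from drift plus minorization alone, without any separate irreducibility input.
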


Although this is not obvious at first glance, the operator~$Q_h$ is in fact
independent of the choice of~$h$ in Lemma~\ref{lem:specQV}, and so
is the invariant measure~$\mu_h$. Actually, Lemma~\ref{lem:Qh} allows to show
that the eigenspace associated with~$h$ has geometric dimension one, \textit{i.e.}
$\mathrm{Ker}\big( Q^f - \Lambda\, \id \big)=\mathrm{Span}\{h\}$. Indeed, if
$\tilde{h}\in\Linfty_W(\X)$ is another eigenvector associated with~$\Lambda$
(which may not be of constant sign), it holds, since $h(x)>0$ for all~$x\in\X$
by~\eqref{eq:muestimate}:
\[
Q_h\left(\frac{\tilde{h}}{h}\right) = \Lambda^{-1}h^{-1} Q^f \tilde{h} =
\frac{\tilde{h}}{h}\in\Linfty_{Wh^{-1}}(\X).
\]
From~\eqref{eq:cvQh}, we obtain
\[
\frac{\tilde{h}}{h} = \mu_h\left(\frac{\tilde{h}}{h}\right),
\]
hence~$\tilde{h}$ is proportional to~$h$. It may actually be possible to directly obtain
this uniqueness result from stronger Krein--Rutman theorems,
like~\cite[Theorem 19.3]{deimling2010nonlinear}, using the irreducibility
condition~\eqref{eq:irreducibility} in Assumption~\ref{as:minogeneral}.

We are now in position to state our main theorem.
\begin{theorem}
\label{theo:general}
Consider a kernel operator~$Q^f$ satisfying 
Assumptions~\ref{as:lyapunovgeneral},~\ref{as:minogeneral} and~\ref{as:regularity} and
the associated dynamics~\eqref{eq:dynamics} with one step evolution operator $\Phi:\PX\to\PX$.
Then~$\Phi$ admits a unique fixed point~$\mu_f^\star\in\PX$, that is a probability measure such that
\begin{equation}
  \label{eq:fixedpoint}
  \Phi (\mu_f^\star) = \mu_f^\star,
\end{equation}
and this measure satisfies $\mu_f^\star(W)<+\infty$. Moreover,
there exists $\bar{\alpha}\in (0,1)$ such that, for any $\mu\in\PX$ satisfying
$\mu(W)<+\infty$, there is $C_{\mu}>0$ for which
\begin{equation}
  \label{eq:FKconv}
  \forall\, \varphi \in \Linfty_W(\X),\quad \forall\, k\geq 1, \quad
  \big| \Phi_k(\mu)(\varphi) - \mu_f^\star(\varphi) \big| \leq
  C_{\mu} \bar{\alpha}^k \| \varphi \|_{\Linfty_W}.
\end{equation}
\end{theorem}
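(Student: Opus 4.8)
The whole strategy is to push the non‑linear evolution through the \emph{Markov} kernel $Q_h$ produced by Lemma~\ref{lem:Qh}. The key elementary fact is the identity
\[
(Q^f)^k\varphi = \Lambda^k\, h\, Q_h^k(\varphi/h), \qquad k\ge 0,\ \varphi\in\Linfty_W(\X),
\]
which follows by induction from $Q^f(h\phi)=\Lambda\, h\, Q_h\phi$ (a rewriting of the definition~\eqref{eq:Qh}), using $h>0$ from~\eqref{eq:muestimate} and that $\varphi/h\in\Linfty_{Wh^{-1}}(\X)$ with $\|\varphi/h\|_{\Linfty_{Wh^{-1}}}\le\|\varphi\|_{\Linfty_W}$. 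Dividing in~\eqref{eq:dynamics} gives, for any $\mu\in\PX$ with $\mu(W)<+\infty$,
\[
\Phi_k(\mu)(\varphi)=\frac{\mu\bigl(h\,Q_h^k(\varphi/h)\bigr)}{\mu\bigl(h\,Q_h^k(h^{-1})\bigr)},
\]
all integrals finite since $0<\mu(h)\le\mu(W)<+\infty$ and $Q_h$ is bounded on $\Linfty_{Wh^{-1}}(\X)$, and the denominator positive because $(Q^f)^k\ind>0$ on $\X$ (the minorization~\eqref{eq:minogeneral} forces $Q^f\ind>0$, hence $(Q^f)^k\ind>0$). I then \emph{define} $\mu_f^*(\varphi):=\mu_h(\varphi/h)/\mu_h(h^{-1})$ with $\mu_h$ the invariant measure of Lemma~\ref{lem:Qh}; this is well posed because $0<\mu_h(h^{-1})\le\mu_h(Wh^{-1})<+\infty$, and it satisfies $\mu_f^*(\ind)=1$, $\mu_f^*(W)<+\infty$. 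From $Q_h(\varphi/h)=\Lambda^{-1}h^{-1}Q^f\varphi$ and $\mu_h Q_h=\mu_h$ one gets $\mu_h(h^{-1}Q^f\varphi)=\Lambda\,\mu_h(\varphi/h)$, i.e.\ $\mu_f^*Q^f=\Lambda\,\mu_f^*$; since $\mu_f^*(Q^f\ind)=\Lambda<+\infty$ this yields $\Phi(\mu_f^*)=\mu_f^*$, which is~\eqref{eq:fixedpoint}. (By the discussion after Lemma~\ref{lem:Qh}, $h,Q_h,\mu_h$ are unique up to normalization of $h$, so $\mu_f^*$ is unambiguous.)

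For the rate~\eqref{eq:FKconv}, fix $\mu$ with $\mu(W)<+\infty$ and $\varphi\in\Linfty_W(\X)$. Since $\Phi_k(\mu)(\varphi)-\mu_f^*(\varphi)=\Phi_k(\mu)(\varphi-\mu_f^*(\varphi)\ind)$ and $\|\varphi-\mu_f^*(\varphi)\ind\|_{\Linfty_W}\le(1+\mu_f^*(W))\|\varphi\|_{\Linfty_W}$, I may assume $\mu_f^*(\varphi)=0$, i.e.\ $\mu_h(\psi)=0$ for $\psi:=\varphi/h$. Then~\eqref{eq:cvQh} gives $\|Q_h^k\psi\|_{\Linfty_{Wh^{-1}}}\le c\,\bar\alpha^{\,k}\|\psi\|_{\Linfty_{Wh^{-1}}}$, hence $|\mu(h\,Q_h^k\psi)|\le c\,\bar\alpha^{\,k}\mu(W)\|\varphi\|_{\Linfty_W}$. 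For the denominator, $h^{-1}\in\Linfty_{Wh^{-1}}(\X)$ with norm $\le1$, so $Q_h^k(h^{-1})\to\mu_h(h^{-1})$ in $\Linfty_{Wh^{-1}}(\X)$ and, by dominated convergence, $b_k:=\mu(h\,Q_h^k(h^{-1}))\to\mu(h)\mu_h(h^{-1})\in(0,+\infty)$; as $b_k>0$ for every $k$ this forces $b_*:=\inf_{k\ge1}b_k>0$. Combining and undoing the reduction gives~\eqref{eq:FKconv} with $C_\mu:=c\,(1+\mu_f^*(W))\,\mu(W)/b_*$ and $\bar\alpha$ the (universal) constant of Lemma~\ref{lem:Qh}.

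For uniqueness, let $\nu\in\PX$ with $\Phi(\nu)=\nu$. Then $\nu Q^f=\lambda\nu$ with $\lambda:=\nu(Q^f\ind)\in(0,+\infty)$, and the lift $h\nu$ is $\sigma$-finite (finite on each $K_n$, since $h\le W$ is bounded on compacts) and satisfies $(h\nu)Q_h=(\lambda/\Lambda)(h\nu)$. Pairing $\nu Q^f=\lambda\nu$ with the eigenfunction $h$ gives $\lambda\,\nu(h)=\nu(Q^fh)=\Lambda\,\nu(h)$ with $\nu(h)>0$; since $Q_h$ satisfies the drift and minorization conditions of Appendix~\ref{sec:tools} it is positive Harris, so it admits $\mu_h$ as its unique $\sigma$-finite invariant measure up to a constant and has no $\sigma$-finite eigenmeasure with eigenvalue $\ne1$. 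Hence $\nu(h)<+\infty$, $\lambda=\Lambda$, $h\nu=\nu(h)\mu_h$, and normalizing $\nu=\mu_f^*$. (When $\nu(W)<+\infty$ one may skip this and simply invoke $\nu=\Phi_k(\nu)\to\mu_f^*$ from the estimate above.)

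Granting Lemmas~\ref{lem:specQV} and~\ref{lem:Qh}, the above is essentially bookkeeping; the two points needing care are keeping the normalizers $\mu\bigl((Q^f)^k\ind\bigr)$ bounded away from $0$ \emph{uniformly in $k$} — which I obtain from their convergence to a strictly positive limit rather than by a crude bound — and, for the last paragraph, establishing uniqueness of the fixed point in all of $\PX$ rather than only among measures of finite $W$-moment, which rests on uniqueness (up to scaling) of the $\sigma$-finite invariant measure of the Markov kernel $Q_h$.
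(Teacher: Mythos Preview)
Your proof follows the same route as the paper: rewrite $\Phi_k(\mu)$ through the $h$-transformed Markov kernel $Q_h$ via $(Q^f)^k\varphi=\Lambda^k h\,Q_h^k(\varphi/h)$, define $\mu_f^*(\varphi)=\mu_h(\varphi/h)/\mu_h(h^{-1})$, and read off geometric convergence from Lemma~\ref{lem:Qh}. Two cosmetic differences: you verify $\Phi(\mu_f^*)=\mu_f^*$ directly from $\mu_h Q_h=\mu_h$ (the paper deduces it a posteriori from the convergence estimate applied with initial measure $\Phi(\mu_f^*)$), and you lower-bound the denominator uniformly in $k$ via $b_k>0$ for all $k$ together with $b_k\to\mu(h)\mu_h(h^{-1})>0$ (the paper bounds it below by $\tfrac12$ only for $k$ large, then absorbs small $k$ into the prefactor). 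Both choices are fine and arguably a bit cleaner than the paper's.

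One point to flag: your uniqueness paragraph aims at a stronger conclusion than the paper actually proves. The paper establishes uniqueness only among fixed points $\nu$ with $\nu(W)<+\infty$, via the convergence estimate (exactly your parenthetical remark). Your attempt to upgrade this to all of $\PX$ rests on the claim that a positive Harris chain admits no nontrivial $\sigma$-finite eigenmeasure with eigenvalue $\ne 1$; this is not part of the paper's toolkit (Appendix~\ref{sec:tools} is just Hairer--Mattingly's Harris theorem), and it does real work here since the pairing $\lambda\,\nu(h)=\Lambda\,\nu(h)$ is vacuous when $\nu(h)=+\infty$. If you want the stronger statement you would need to supply that ingredient separately; otherwise, your parenthetical already matches the paper's argument and suffices for the theorem as the paper proves it.
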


We call~$\mu_f^\star$ the invariant measure of~$Q^f$, in analogy with Markov chains.
Note that Theorem~\ref{theo:general} also implies the convergence of~$\Phi_k(\mu)$
towards~$\mu_f^\star$ in the weighted total variation distance (a special type of Wasserstein
distance~\cite{villani2003topics,hairer2011yet}) defined, for $\mu,\nu\in\PX$
with $\mu(W)<+\infty$, $\nu(W)<+\infty$, by
\begin{equation}
  \label{eq:defrho}
  \rho_W(\mu,\nu)= \underset{\|\varphi\|_{\Linfty_W}\leq 1}{\sup} \
  \intX \varphi(x)\, (\mu-\nu)(dx).
\end{equation}

\begin{proof}
  The key idea of the proof is to reformulate the dynamics~\eqref{eq:dynamics} using the
  $h$-transformed operator $Q_h= \Lambda^{-1}h^{-1} Q^f h$ of Lemma~\ref{lem:Qh}.
  Using the notation of Lemmas~\ref{lem:specQV} and~\ref{lem:Qh},
  we rewrite~\eqref{eq:dynamics} as
  \[
  \Phi_k(\mu)(\varphi)=\frac{\mu\big( (Q^f)^k\varphi\big)\Lambda^{-k}}
     {\mu\big( (Q^f)^k\ind \big)\Lambda^{-k}}
     = \frac{\mu\Big(h \big(\Lambda^{-1} h^{-1} Q^f h\big)^k (h^{-1}\varphi)\Big) }
     {\mu\Big( h \big(\Lambda^{-1} h^{-1} Q^f h\big)^k h^{-1} \Big) }=\frac{\mu\big(h (Q_h)^k (h^{-1}\varphi)\big)}
        {\mu\big(h (Q_h)^k h^{-1} \big)}.
     \]
     The dynamics~\eqref{eq:dynamics} is therefore reformulated
     as the ratio of long time expectations of the Markov chains induced by~$Q_h$, applied
     to the functions~$h^{-1}\varphi$ and~$h^{-1}$. It is then possible to resort to the convergence
     results given by Lemma~\ref{lem:Qh}.

     We first construct a probability measure~$\mu_f^\star$ for which~\eqref{eq:FKconv} is
     satisfied, namely
     \begin{equation}
       \label{eq:mudef}
       \mu_f^\star(\varphi)=\frac{\mu_h\left(h^{-1}\varphi \right)}{\mu_h\left( h^{-1}\right)},
     \end{equation}
     where~$\mu_h$ is the probability measure introduced in Lemma~\ref{lem:Qh}. Note
     that~$\mu_f^\star$
     is well-defined for $\varphi\in\Linfty_{W}(\X)$. Indeed, for $\varphi\in\Linfty_{W}(\X)$,
     it holds $h^{-1}\varphi\in\Linfty_{Wh^{-1}}(\X)$. Second, we show that
     \begin{equation}
       \label{eq:muhpos}
       \mu_h(h^{-1})>0.
     \end{equation}
     Indeed, since $\| h \|_{\Linfty_W}= 1$, it holds $h^{-1}\geq W^{-1}$, and since~$W$ is
     upper bounded on any compact set,~$W^{-1}$ is lower bounded by a positive constant on
     any compact set. As $\mu_h\in\PX$, we can use Lemma~\ref{lem:tight} in
     Appendix~\ref{sec:krein} to conclude that $\mu_h(h^{-1})>0$.
     Moreover,~$\mu_f^\star$ does not depend on the choice of normalization for~$h$.
     Finally, $\mu_f^\star(W)<+\infty$ since $\mu_h(Wh^{-1})<+\infty$.

     From Lemma~\ref{lem:Qh}, for any $\varphi\in\Linfty_W(\X)$, it holds
     $Q_h^k (h^{-1}\varphi)= \mu_h(h^{-1}\varphi)+a_k$ and
     $Q_h^k(h^{-1})= \mu_h(h^{-1})+b_k$ with $\| a_k \|_{\Linfty_{Wh^{-1}}} \leq c \bar{\alpha}^k
     \| h^{-1}\varphi- \mu_h(h^{-1}\varphi) \|_{\Linfty_{Wh^{-1}}}$ and
     $\| b_k \|_{\Linfty_{Wh^{-1}}} \leq
     c \bar{\alpha}^k \| h^{-1} - \mu_h(h^{-1}) \|_{\Linfty_{Wh^{-1}}}$.
     Since $\varphi\in\Linfty_W(\X)$, we have in particular (using also $\|h\|_{\Linfty_W}=1$),
     \[
     \left\| h^{-1}\varphi- \mu_h(h^{-1}\varphi) \right\|_{\Linfty_{Wh^{-1}}}
     \leq \| h^{-1}\varphi \|_{\Linfty_{Wh^{-1}}}
     + \mu_h(h^{-1}|\varphi|) \| h \|_{\Linfty_W}
     \leq \big(1  + \mu_h(Wh^{-1})\big) \| \varphi \|_{\Linfty_{W}}<+\infty.
     \]
     Since $\mu_h(Wh^{-1})<+\infty$, we can set~$c' =1  + \mu_h(Wh^{-1})$
     so that
     \begin{equation}
       \label{eq:an}
     \| a_k \|_{\Linfty_{Wh^{-1}}} \leq c' \bar{\alpha}^k \| \varphi \|_{\Linfty_{W}}.
     \end{equation}
     A similar estimate holds for the sequence~$(b_k)_{k\geq 1}$ by
     taking~$\varphi \equiv \ind$. This leads to, for any $\varphi\in\Linfty_W(\X)$,
     \[
     \begin{aligned}
       \big| \Phi_k(\mu)(\varphi) - \mu_f^\star(\varphi)\big| & = \left|
       \frac{\mu\left(h (Q_h)^k(h^{-1}\varphi)\right)}
        {\mu\left(h (Q_h)^k h^{-1} \right)} - \mu_f^\star(\varphi)\right|
           = \left| \frac{\mu \left(h( \mu_h(h^{-1}\varphi) + a_k) \right)}
           {\mu \left(h( \mu_h(h^{-1}) + b_k) \right)} - \mu_f^\star(\varphi)\right|
           \\ & = \left| \frac{\mu(h)\mu_h(h^{-1}\varphi) +\mu(h a_k) }
           {\mu(h)\mu_h(h^{-1}) + \mu(h b_k) } - \mu_f^\star(\varphi) \right|
             = \left| \frac{\mu_f^\star(\varphi) +c_{\mu,h}\mu(h a_k) } 
           {1 + c_{\mu,h} \mu(h b_k) }- \mu_f^\star(\varphi) \right|,
     \end{aligned}
     \]
     where we introduced
     \begin{equation}
       \label{eq:cmuh}
     c_{\mu,h}= \frac{1}{\mu(h)\mu_h(h^{-1})}.
     \end{equation}
     It holds $0<c_{\mu,h}<+\infty$ because:
     \begin{itemize}
     \item Lemma~\ref{lem:specQV} shows that for any $\mu\in\PX$ with $\mu(W)<+\infty$,
       it holds $0<\mu(h)<+\infty$;
         \item we know that $\mu_h(h^{-1})<+\infty$ from Lemma~\ref{lem:Qh};
         \item $\mu_h(h^{-1}) >0$ by~\eqref{eq:muhpos}.
     \end{itemize}
     Now, since $|b_k|\leq \| b_k\|_{\Linfty_{Wh^{-1}}} W h^{-1}$
     and~\eqref{eq:an} holds for~$b_k$ with $\varphi\equiv \ind$, we have
     \[
     1 + c_{\mu,h}\mu (h b_k) \geq 1 - c_{\mu,h}\mu ( h | b_k| )
     \geq 1 - c_{\mu,h}\mu ( W )  \| b_k\|_{\Linfty_{Wh^{-1} }}
     \geq 1 - \bar{\alpha}^k c' c_{\mu,h}\mu ( W ).
     \]
     Therefore, the choice
     \[
     k\geq - \frac{ \log \big( 2 c' c_{\mu,h} \mu(W)\big) }
     { \log(\bar{\alpha})}
     \]
     ensures that
     \[
     1 + c_{\mu,h}\mu (h b_k) \geq \frac{1}{2}.
     \]
     As a result, for~$k$ large enough, using
     $|a_k| \leq \| a_k \|_{\Linfty_{Wh^{-1}}} Wh^{-1}$ and
     recalling~\eqref{eq:an},
     \begin{equation}
       \label{eq:intmuW}
       \big| \Phi_k(\mu)(\varphi) - \mu_f^\star(\varphi)\big|
       \leq \frac{  c_{\mu,h} \big( \mu(h |a_k| ) + \mu_f^\star(|\varphi|)\mu(h |b_k|)\big) }
            {1 + c_{\mu,h} \mu(h b_k)}
             \leq C_{\mu} \|\varphi\|_{\Linfty_W} \bar{\alpha}^k,
     \end{equation}
     with
     \begin{equation}
       \label{eq:Cmulast}
       C_{\mu} = 2 c_{\mu,h} c' \mu(W)\big(1 +  \mu_f^\star(W) \big) = \frac{2}{\mu_h(h^{-1})}
       \big( 1 + \mu_h(Wh^{-1})\big) \big( 1 + \mu_f^\star(W)\big) \frac{\mu(W)}{\mu(h)}.
     \end{equation}
     We therefore obtain~\eqref{eq:FKconv} from \eqref{eq:intmuW} with the constant
     defined in~\eqref{eq:Cmulast}. Note that~$C_{\mu}$ depends on the initial
     measure~$\mu$ only through the ratio~$\mu(W)/\mu(h)$.

     Taking the supremum over $\varphi\in\Linfty_W(\X)$ such
     that $\|\varphi\|_{\Linfty_W}\leq 1$,~\eqref{eq:intmuW} rewrites, with~\eqref{eq:defrho}:
     \[
     \rho_W\big( \Phi_k(\mu), \mu_f^\star \big) \leq C_{\mu} \bar{\alpha}^k.
     \]
     Choosing $\mu=\Phi(\mu_f^\star)$ and using the semigroup property we obtain
     \[
     \rho_W\big( \Phi(\Phi_k(\mu_f^\star)), \mu_f^\star \big) \leq C_{\mu_f^\star} \bar{\alpha}^k.
     \]
     Taking the limit $k\to+\infty$ shows that $\Phi(\mu_f^\star)=\mu_f^\star$,
     so~$\mu_f^\star$ is a
     fixed point of~$\Phi$.

     We have shown the existence of an invariant measure of the form~\eqref{eq:mudef}, which
     is a fixed point of~$\Phi$ and integrates~$W$. We now turn to uniqueness, which follows
     by a standard fixed point argument. Assume that we have two probability
     measures~$\mu_1$ and~$\mu_2$ satisfying~\eqref{eq:FKconv} and such that
      $\mu_1(W)<+\infty$,  $\mu_2(W)<+\infty$, which are therefore fixed
     points of~$\Phi$. Then, there exists $\bar{\alpha}\in(0,1)$ such that, for any
     measure $\mu\in\PX$ with $\mu(W)<+\infty$, there is a constant~$C_{\mu}$ for which
     \[
     \forall\, k\geq 1, \quad
     \rho_W\big( \Phi_k(\mu), \mu_1 \big) \leq C_{\mu} \bar{\alpha}^k.
     \]
     Choosing $\mu=\mu_2$ and using the invariance by~$\Phi$ leads to
     \[
     \rho_W ( \mu_2, \mu_1 ) \leq C_{\mu_2} \bar{\alpha}^k.
     \]
     Taking the limit $k\to +\infty$ shows that $\mu_1=\mu_2$, so the invariant measure is
     unique.
\end{proof}

Theorem~\ref{theo:general} also leads to alternative representations of the spectral
radius~$\Lambda$ as a scaled cumulant generating function~\cite{kontoyiannis2005large} and as the
average rate of creation of probability of the dynamics. This is the purpose of the following
result.

\begin{theorem}
  \label{prop:SCGF}
  Let~$Q^f$ be as in Theorem~\ref{theo:general} and define $\lambda=\log(\Lambda)$.
  Then, for any $\mu\in\PX$ with $\mu(W)<+\infty$,
  \begin{equation}
    \label{eq:SCGF}
    \lambda = \underset{k\to+\infty} {\lim}\ \frac{1}{k}\log
    \Big(\mu\big[ (Q^f)^k\ind \big]\Big).
  \end{equation}
  Moreover,
  \begin{equation}
    \label{eq:average}
    \Lambda = \mu_f^\star\big(Q^f\ind \big).
  \end{equation}
\end{theorem}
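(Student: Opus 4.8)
The plan is to exploit once more the $h$-transform reformulation used in the proof of Theorem~\ref{theo:general}. For the first identity~\eqref{eq:SCGF}, I would write, exactly as in that proof,
\[
\mu\big[ (Q^f)^k\ind \big] = \Lambda^k\, \mu\big( h\, Q_h^k(h^{-1})\big),
\]
so that
\[
\frac{1}{k}\log\Big(\mu\big[ (Q^f)^k\ind \big]\Big) = \log(\Lambda) + \frac{1}{k}\log\Big(\mu\big( h\, Q_h^k(h^{-1})\big)\Big).
\]
It then suffices to show that the second term tends to $0$. By Lemma~\ref{lem:Qh}, $Q_h^k(h^{-1}) \to \mu_h(h^{-1})$ in $\Linfty_{Wh^{-1}}$, hence $\mu\big( h\, Q_h^k(h^{-1})\big) \to \mu(h)\,\mu_h(h^{-1})$ since $\mu(W)<+\infty$ gives $\mu(h)<+\infty$ (Lemma~\ref{lem:specQV}) and the error is controlled by $\bar\alpha^k\|h^{-1}-\mu_h(h^{-1})\|_{\Linfty_{Wh^{-1}}}\,\mu(W)$, which is finite. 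The limit $\mu(h)\,\mu_h(h^{-1})$ is a strictly positive finite constant: positivity of $\mu(h)$ comes from Lemma~\ref{lem:specQV}, finiteness and positivity of $\mu_h(h^{-1})$ from Lemma~\ref{lem:Qh} and~\eqref{eq:muhpos}. Therefore $\frac{1}{k}\log$ of a bounded sequence bounded away from $0$ tends to $0$, and~\eqref{eq:SCGF} follows.

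For the identity~\eqref{eq:average}, I would start from the fixed point relation $\Phi(\mu_f^*)=\mu_f^*$ established in Theorem~\ref{theo:general}. Applying both sides to the constant test function $\varphi\equiv\ind$ and using the definition~\eqref{eq:onestep} of $\Phi$ gives
\[
1 = \mu_f^*(\ind) = \Phi(\mu_f^*)(\ind) = \frac{\mu_f^*\big(Q^f\ind\big)}{\mu_f^*\big(Q^f\ind\big)},
\]
which is a tautology, so this direct route is insufficient; instead I would test against $h$. Since $Q^f h = \Lambda h$, one has $\mu_f^*(Q^f h) = \Lambda\,\mu_f^*(h)$, while by the fixed point property $\Phi(\mu_f^*)(h) = \mu_f^*(Q^f h)/\mu_f^*(Q^f\ind) = \mu_f^*(h)$, so $\mu_f^*(Q^f h) = \mu_f^*(h)\,\mu_f^*(Q^f\ind)$. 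Combining the two expressions for $\mu_f^*(Q^f h)$ yields $\Lambda\,\mu_f^*(h) = \mu_f^*(h)\,\mu_f^*(Q^f\ind)$, and dividing by $\mu_f^*(h)$—which is strictly positive and finite because $\mu_f^*(W)<+\infty$ (so $\mu_f^*(h)<+\infty$ by $h\in\Linfty_W$) and $\mu_f^*(h)>0$ by Lemma~\ref{lem:tight} applied as in the proof of~\eqref{eq:muhpos}—gives exactly~\eqref{eq:average}. One subtlety to check carefully is that $h\in\Linfty_W(\X)$ indeed lies in the domain on which $\Phi$ acts and that $Q^f h$ is integrable against $\mu_f^*$; this is immediate since $h\in\Linfty_W$ and $Q^f$ is bounded on $\Linfty_W$ by Assumption~\ref{as:lyapunovgeneral}, so $Q^f h\in\Linfty_W$ and $\mu_f^*(W)<+\infty$.

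The main obstacle is a bookkeeping one rather than a conceptual one: in both parts everything reduces to verifying that the various quantities $\mu(h)$, $\mu_f^*(h)$, $\mu_h(h^{-1})$ are simultaneously strictly positive and finite, so that one may legitimately divide by them and take logarithms. These facts are all available from Lemmas~\ref{lem:specQV},~\ref{lem:Qh}, the tightness Lemma~\ref{lem:tight}, and estimate~\eqref{eq:muhpos}; the only care needed is to invoke them in the right order and to note that $\mu(W)<+\infty$ propagates to $\mu(h)<+\infty$ via the normalization $\|h\|_{\Linfty_W}=1$. No genuinely new estimate is required.
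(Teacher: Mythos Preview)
Your proposal is correct and follows essentially the same approach as the paper: for~\eqref{eq:SCGF} the paper also writes $\mu[(Q^f)^k\ind]=\Lambda^k\mu(hQ_h^kh^{-1})$, notes that $\mu(hQ_h^kh^{-1})\to c_{h,\mu}^{-1}=\mu(h)\mu_h(h^{-1})$ via Lemma~\ref{lem:Qh}, and concludes; for~\eqref{eq:average} the paper likewise tests the fixed-point relation against $\varphi=h$ and cancels $\mu_f^*(h)$. Your extra care in justifying $0<\mu_f^*(h)<+\infty$ is warranted and slightly more explicit than the paper, which uses this implicitly.
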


\begin{proof}
  Considering the operator~$Q_h$ introduced in Lemma~\ref{lem:Qh}, we
  have for any $\mu\in\PX$ with $\mu(W)<+\infty$,
  \[
  \mu\big[ (Q^f)^k\ind \big] = \mu \big( \Lambda^k h Q_h^k h^{-1} \big).
  \]
  Taking the logarithm and dividing by~$k$ leads to
  \[
  \frac{1}{k}\log \mu \big[ (Q^f)^k\ind \big] = \log(\Lambda) +
  \frac{1}{k}\log \mu \big( h Q_h^k h^{-1} \big).
  \]
  Lemma~\ref{lem:Qh} shows that $\mu \big( h Q_h^k h^{-1} \big)$ converges to~$c_{\mu,h}^{-1}$,
  where~$c_{\mu,h}$ is defined in~\eqref{eq:cmuh}. Taking the limit $k\to+\infty$ then leads
  to~\eqref{eq:SCGF}.

  In order to prove~\eqref{eq:average}, we use that~$\mu_f^\star$ is a fixed point of~$\Phi$,
  \textit{i.e.} for any $\varphi\in\Linfty_W(\X)$,
  \[
  \mu_f^\star(\varphi ) = \frac{ \mu_f^\star( Q^f\varphi) }{\mu_f^\star( Q^f \ind )}.
  \]
  Taking $\varphi=h\in\Linfty_W(\X)$ and using $Q^f h = \Lambda h$ we obtain
  \[
  \mu_f^\star(h) =  \frac{ \mu_f^\star( \Lambda h) }{\mu_f^\star( Q^f \ind )},
  \]
  so that $\Lambda=\mu_f^\star( Q^f \ind )$, as claimed.
\end{proof}

Although stated in an abstract setting, Theorem~\ref{prop:SCGF} has a natural
interpretation. If $Q^f=\e^{f}Q$ where~$Q$ is the evolution operator of a Markov
chain $(x_k)_{k\in\N}$ with $x_0\sim\mu$, then~\eqref{eq:SCGF} rewrites
\[
\lambda = \underset{k\to+\infty}{\lim}\, \frac{1}{k}\log\, \E_{\mu}\Big[ \e^{\sum_{i=0}^{k-1}f(x_i)}
  \Big],
\]
which is a standard formula for the scaled cumulant generating function (SCGF, or
logarithmic spectral radius) in large deviations theory~\cite{dembo2010large,kontoyiannis2005large}.
We remind that~$\E_{\mu}$ stands for the expectation
with respect to all trajectories with initial condition distributed according to~$\mu$.
On the other hand,~\eqref{eq:average} means that this SCGF can be expressed as the average rate
of creation of probability of the process under the invariant measure.
In particular, if $Q^f=Q$ is the evolution operator of a Markov chain, $\Lambda =1$ since there is
no creation of probability.
Formula~\eqref{eq:average} does not seem typical in the large deviations literature, but was used
in~\cite{ferre2017error} to quantify the bias arising from discretizing a continuous
Feynman--Kac dynamics.

\begin{remark}
  \label{rem:as}
  It should be clear from the proofs that Assumptions 1 to 3 can be adapted or relaxed
  depending on the context. In particular, we typically consider situations in which
  the state space~$\X$ is (a subset of)~$\R^d$,
  and the transition kernel~$Q^f$ has a transition density $p^f(x,y)>0$ jointly continuous
  in $x,y$. In this case, Assumptions~\ref{as:minogeneral}
  and~\ref{as:regularity} are immediately fulfilled by setting
  $\eta_n(dx) = \ind_{K_n}(x)dx/|K_n|$ for each compact~$K_n$, as we will see in
  Section~\ref{sec:continuous}.
  Similarly, the assumption that~$W\geq 1$ can be
  weakened into:~$W$ is lower bounded by a positive constant on each compact set.

  Another remark of interest is that the regularity condition (Assumption~\ref{as:regularity})
  is not satisfied by Metropolis type kernels~\cite{robert2004monte}, which are therefore
  not covered by our analysis. The obstruction here is that we cannot prove with our techniques that the
  essential spectral radius of~$Q^f$ is~0, because the compactness argument used in the first step of the proof of Lemma~\ref{lem:specQV} in Appendix~\ref{sec:specQV} fails. However, we believe that a finer spectral analysis can cope
  with this situation, see for instance~\cite{wu2004essential} for a careful study of the essential
  spectrum of discrete time Markov chains.

  Let us mention that, in Assumption~\ref{as:lyapunovgeneral}, it seems sufficient
  to suppose that $\gamma_n < \Lambda$ for some~$n\geq 1$ in order to obtain
  that $\theta(Q^f) < \Lambda(Q^f)$ in the proof of Lemma~\ref{lem:specQV}. This is sufficient
  to apply the Krein--Rutman theorem, and to obtain a Lyapunov condition for~$Q_h$
  (see Remark~\ref{rem:Qh} in Appendix~\ref{app:Qh}).
  
  It is also possible to keep track of the constants in the proofs of Lemma~\ref{lem:Qh}
  and Theorem~\ref{theo:general}, like in~\cite{hairer2011yet}, and observe that they
  depend on the assumptions through the coefficients~$\gamma_n$, $b_n$,
  $\alpha_n$, the measures~$\eta_n$ and the function~$W$. More precisely, the constants
  deteriorate when~$\alpha_n$ and~$\eta_n(h)$ are small, and~$\gamma_n$, $b_n$
  and~$\sup_{K_n} W$ are large. Therefore, although the term~$\eta_n(h)$ cannot be controlled
  more explicitly under our assumptions,  it seems possible to optimize
  the final constants in Lemma~\ref{lem:Qh} (and thus in Theorem~\ref{theo:general})
  with respect to the choice of~$n$.

  In order to sketch the role of each assumption in the proofs of the results, we display
  in Figure~\ref{fig:proof} a schematic representation of the arguments. We hope this will
  help adapting our framework to situations where our assumptions are not fulfilled as such.
\end{remark}

\begin{figure}[!h]
\psscalebox{0.9 0.9} % Change this value to rescale the drawing.
           {
             \begin{pspicture}(0,-4.815)(18.8,4.815)
               \definecolor{colour0}{rgb}{0.0,0.6,0.6}
               \definecolor{colour1}{rgb}{0.6,0.0,0.0}
               \rput[bl](10.0,3.585){Minorization condition}
               \rput[bl](4.0,3.585){Lyapunov condition }
               \rput[bl](15.6,1.585){Local regularity }
               \rput[bl](14.4,-0.415){Existence of $\bar{n}$ such }
               \psframe[linecolor=black, linewidth=0.06, dimen=outer, framearc=0.1961375](7.6,4.385)(3.6,2.785)
               \psframe[linecolor=black, linewidth=0.06, dimen=outer, framearc=0.1961375](14.0,4.385)(9.6,2.785)
               \psframe[linecolor=black, linewidth=0.06, dimen=outer, framearc=0.1961375](18.4,2.385)(15.2,0.785)
               \psframe[linecolor=black, linewidth=0.06, linestyle=dashed, dash=0.17638889cm 0.10583334cm, dimen=outer, framearc=0.21805875](17.6,0.385)(14.0,-1.615)
               \rput[bl](9.6,1.585){Zero essential radius}
               \rput[bl](9.6,-0.815){Total cone stability}
               \rput[bl](9.6,-3.215){Positive spectral radius}
               \psframe[linecolor=colour0, linewidth=0.03, linestyle=dashed, dash=0.17638889cm 0.10583334cm, dimen=outer, framearc=0.20097657](13.4,2.385)(9.2,1.185)
               \psframe[linecolor=colour0, linewidth=0.03, linestyle=dashed, dash=0.17638889cm 0.10583334cm, dimen=outer, framearc=0.20097657](12.8,-0.015)(9.2,-1.215)
               \psframe[linecolor=colour0, linewidth=0.03, linestyle=dashed, dash=0.17638889cm 0.10583334cm, dimen=outer, framearc=0.20097657](13.6,-2.415)(9.2,-3.615)
               \psline[linecolor=black, linewidth=0.04, arrowsize=0.05291667cm 2.0,arrowlength=1.4,arrowinset=0.0]{<-}(13.35026,1.7765324)(15.199974,1.7934676)(15.199974,1.7934676)
               \rput[bl](5.2,-0.815){Existence of $h>0$}
               \rput[bl](1.2,-0.815){Stability of $Q_h$}
               \psframe[linecolor=colour1, linewidth=0.06, linestyle=dotted, dotsep=0.10583334cm, dimen=outer, framearc=0.1961375](8.4,-0.015)(4.8,-1.615)
               \psframe[linecolor=colour1, linewidth=0.06, linestyle=dotted, dotsep=0.10583334cm, dimen=outer, framearc=0.1961375](4.0,-0.015)(0.8,-1.615)
               
               \psline[linecolor=black, linewidth=0.04, arrowsize=0.05291667cm 2.0,arrowlength=1.4,arrowinset=0.0]{<-}(14.1,0.3)(13.6,1.03)(13.6,2.785) %(13.6,2.435) (12.8,-0.015)(13.6,1.035)
               %{->}(13.60,1.58)(13.6,1.03)(14.1,0.3)
               \psline[linecolor=black, linewidth=0.04, arrowsize=0.05291667cm 2.0,arrowlength=1.4,arrowinset=0.0]{<-}(9.2,-0.015)(6.8,1.185)(5.2,2.785)
               \psline[linecolor=black, linewidth=0.04, arrowsize=0.05291667cm 2.0,arrowlength=1.4,arrowinset=0.0]{<-}(13.6,-2.815)(14.4,-1.615)(14.4,-1.615)
               \psline[linecolor=colour0, linewidth=0.04, linestyle=dashed, dash=0.17638889cm 0.10583334cm, arrowsize=0.05291667cm 2.0,arrowlength=1.4,arrowinset=0.0]{<-}(8.2,-0.015)(9.2,1.585)(9.2,1.585)
               \psline[linecolor=colour0, linewidth=0.04, linestyle=dashed, dash=0.17638889cm 0.10583334cm, arrowsize=0.05291667cm 2.0,arrowlength=1.4,arrowinset=0.0]{<-}(8.4,-0.615)(9.2,-0.615)(9.2,-0.615)
               \psline[linecolor=colour0, linewidth=0.04, linestyle=dashed, dash=0.17638889cm 0.10583334cm, arrowsize=0.05291667cm 2.0,arrowlength=1.4,arrowinset=0.0]{<-}(7.6,-1.615)(9.2,-2.815)(9.2,-2.815)
               \psline[linecolor=colour1, linewidth=0.06, linestyle=dotted, dotsep=0.10583334cm, arrowsize=0.05291667cm 2.0,arrowlength=1.4,arrowinset=0.0]{<-}(4.0,-0.615)(4.8,-0.615)(4.8,-0.615)
               \psline[linecolor=colour1, linewidth=0.06, linestyle=dotted, dotsep=0.10583334cm, arrowsize=0.05291667cm 2.0,arrowlength=1.4,arrowinset=0.0]{<-}(2.8,-0.015)(4.0,1.385)(4.0,2.785)(4.0,2.785)
               \psline[linecolor=colour1, linewidth=0.06, linestyle=dotted, dotsep=0.10583334cm, arrowsize=0.05291667cm 2.0,arrowlength=1.4,arrowinset=0.0]{<-}(2.0,-0.015)(2.0,2.785)(3.6,4.785)(10.8,4.785)(11.6,4.385)(11.6,4.385)
               \rput[bl](14.4,-0.815){that $\eta_{\bar{n}}(K_{\bar{n}}) >0$}
               \rput[bl](4.0,3.185){Assumption 1}
               \rput[bl](15.6,1.185){Assumption 3}
               \rput[bl](14.4,-1.215){(Lemma 1)}
               \psframe[linecolor=black, linewidth=0.06, dimen=outer, framearc=0.21805875](18.8,-2.815)(15.2,-4.815)
               \rput[bl](10.0,3.185){Assumption 2, Eq. (7)}
               \rput[bl](16.0,-3.615){Irreducibility}
               \rput[bl](16.0,-4.015){Assumption 2}
               \rput[bl](16.0,-4.415){Eq. (8)}
               \psline[linecolor=black, linewidth=0.04, arrowsize=0.05291667cm 2.0,arrowlength=1.4,arrowinset=0.0]{->}(16.8,-2.815)(16.0,-1.615)
               \psline[linecolor=colour0, linewidth=0.04, linestyle=dashed, dash=0.17638889cm 0.10583334cm, arrowsize=0.05291667cm 2.0,arrowlength=1.4,arrowinset=0.0]{->}(15.2,-4.415)(9.2,-4.415)(6.0,-1.615)
               \rput[bl](5.2,-1.215){(Lemma 2)}
               \rput[bl](1.2,-1.215){(Lemma 3)}
               \psline[linecolor=black, linewidth=0.04, arrowsize=0.05291667cm 2.0,arrowlength=1.4,arrowinset=0.0]{->}(6.8,2.785)(7.6,1.985)(9.2,1.985)
               \psframe[linecolor=colour1, linewidth=0.06, dimen=outer, framearc=0.187](2.8,-2.815)(0.0,-4.015)
               \psframe[linecolor=colour1, linewidth=0.06, dimen=outer, framearc=0.187](6.8,-2.815)(3.6,-4.015)
               \rput[bl](4.0,-3.615){Uniqueness of $h$}
               \rput[bl](0.4,-3.5){Theorem 1}
               \psline[linecolor=colour1, linewidth=0.04, linestyle=dotted, dotsep=0.10583334cm, arrowsize=0.05291667cm 2.0,arrowlength=1.4,arrowinset=0.0]{->}(1.2,-1.615)(1.2,-2.815)
               \psline[linecolor=colour1, linewidth=0.04, linestyle=dotted, dotsep=0.10583334cm, arrowsize=0.05291667cm 2.0,arrowlength=1.4,arrowinset=0.0]{->}(3.2,-1.615)(4.0,-2.815)
               \psline[linecolor=colour1, linewidth=0.04, linestyle=dotted, dotsep=0.10583334cm, arrowsize=0.05291667cm 2.0,arrowlength=1.4,arrowinset=0.0]{->}(5.6,-1.615)(4.8,-2.815)
               %\psline[linecolor=black, linewidth=0.04, arrowsize=0.05291667cm 2.0,arrowlength=1.4,arrowinset=0.0]{->}(13.60,1.58)(13.6,1.03)(14.1,0.3)
             \end{pspicture}
           }~  
           \caption{Schematic representation of the arguments used for the proofs of
             Lemmas~\ref{lem:preliminary},~\ref{lem:specQV},~\ref{lem:Qh}, and
             Theorem~\ref{theo:general}. The plain lines correspond to Assumptions 1-3
             pointing towards Lemma~\ref{lem:preliminary} and the key ingredients for
             the proof of Lemma~\ref{lem:specQV}. The dashed lines correspond to the actual proof of
             Lemma~\ref{lem:specQV}. The dotted lines correspond to the elements needed for the
             proof of Lemma~\ref{lem:Qh} and its consequences.}
\label{fig:proof}
\end{figure}
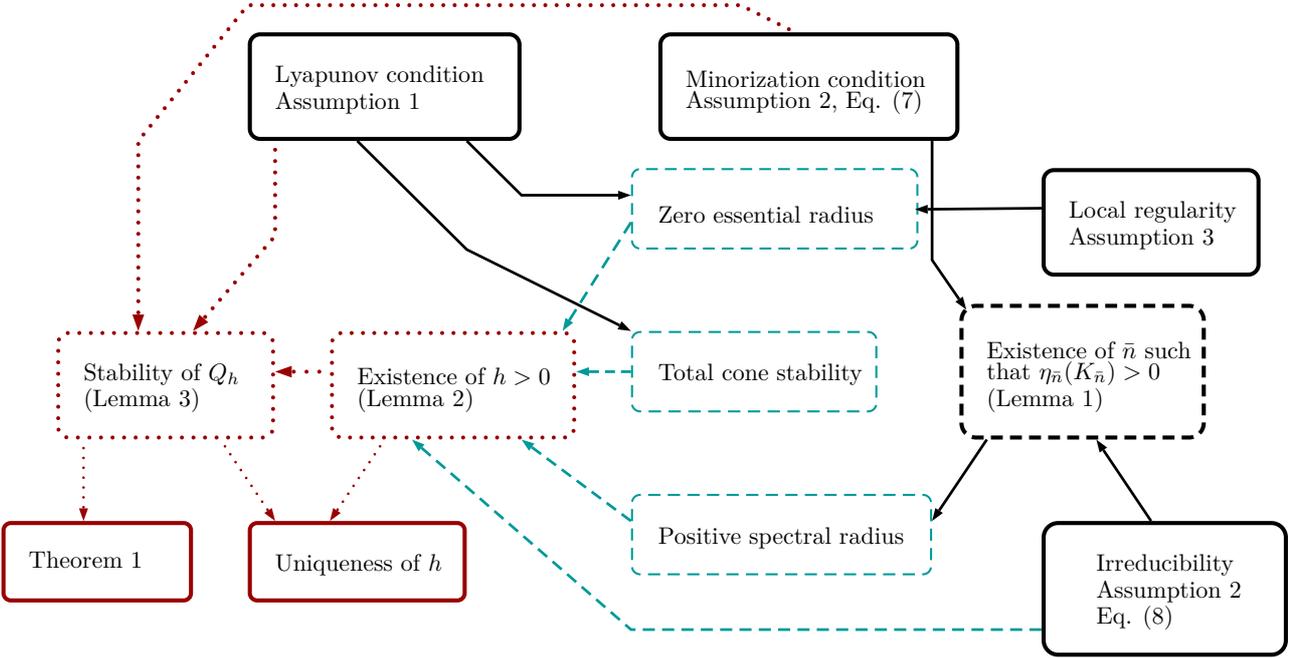

%%%%%%%%%%%%%%%%%%%%%%%%%%%%%%%%%%%%%%%%%%%%%%%%%%%%%%%%%%%%%%%%%%%%%%%%%%%%%%%%%%%%%%%

\subsection{Results in continuous time}
\label{sec:continuous}
Our analysis carries over to time continuous processes, in particular diffusions. In this case,
it is possible to rephrase Assumption~\ref{as:lyapunovgeneral} in terms of the associated
infinitesimal generator. In order to avoid the technical difficulty of dealing with an
infinite dimensional process, we consider a diffusion~$(X_t)_{t\geq 0}$
over $\X= \R^d$ for some integer $d\geq 1$, satisfying the SDE
\begin{equation}
  \label{eq:SDE}
dX_t= b(X_t)\, dt + \sigma(X_t)\, dB_t,
\end{equation}
where $b:\X \to\R^d$, $\sigma :\X \to \R^{d\times m}$ and~$(B_t)_{t\geq 0}$ is
an $m$-dimensional Brownian motion (for some integer $m\geq 1$).
We assume that~$b$ and~$\sigma$ are locally Lipschitz in order for~\eqref{eq:SDE} to have
at least a local solution~\cite[Chapter~XI, Exercice~(2.10)]{revuz2013continuous}.
The noise in the SDE~\eqref{eq:SDE} may be degenerate (\emph{i.e.} $\sigma\sigma^T$ is not of full rank~$d$) provided
some technical conditions are met (see Assumptions~\ref{as:contlyapunov} and~\ref{as:hypoelliptic} below).

The associated infinitesimal generator is given by
\begin{equation}
  \label{eq:gen}
  \Lc = b\cdot \nabla +\frac{\sigma \sigma^T}{2}: \nabla^2
  =  \sum_{i=1}^d b_i \partial_{x_i}
  +\frac{1}{2} \sum_{i,j=1}^d  (\sigma\sigma^T)_{ij}\partial_{x_i}\partial_{x_j}.
\end{equation}
We also consider a measurable function~$f:\X\to\R$ and the corresponding continuous Feynman--Kac
semigroup that reads, for all $t > 0$ and all initial distribution
$\mu\in\PX$,
\begin{equation}
  \label{eq:continuous}
  \Theta_t(\mu)(\varphi) =
  \frac{\E_{\mu}\left( \varphi(X_t)\, \e^{\int_0^t f(X_s)\, ds}\right)}
       {\E_{\mu}\left( \e^{\int_0^t f(X_s)\, ds}\right)}.
\end{equation}
In this setting, we define the operator
\[
\big(P_t^f\varphi\big)(x) = \E_x\left( \varphi(X_t)\, \e^{\int_0^t f(X_s)\, ds}\right),
\]
so that~\eqref{eq:continuous} is the natural continuous counterpart
of~\eqref{eq:dynamics} where, for a fixed time $t>0$, we formally have 
\begin{equation}
  \label{eq:contQP}
Q^f := P_t^f = \e^{t(\Lc + f)}.
\end{equation}
As a result,~$\Theta_t$ satisfies a semigroup property as the discrete time evolution
through~\eqref{eq:onestep}.
In this case, the generator of the weighted evolution operator~$P_t^f$ is $\Lc + f$.
As for the discrete semigroup~\eqref{eq:dynamics}, we are interested in
the long time behavior of quantities such as~\eqref{eq:continuous}. When $b=0$ and
$\sigma=\sqrt{2}\,\id$, decay estimates of~\eqref{eq:continuous} 
in~$L^2(\X)$ towards a well-defined limit can be obtained by considering the spectral
properties of the Schrödinger type operator $- \Delta  - f$, as in~\cite{rousset2006control}.
When $\sigma=\sqrt{2}\,\id$ and $b=-\nabla U$ is the gradient of a potential energy, the
operator $\Lc +f$ is self-adjoint in~$L^2(\e^{-U})$ (see for instance~\cite{bakry2013analysis}),
and the unitary transform $\varphi\mapsto \varphi\,\e^{-\frac{U}{2}}$ leads to an analysis
similar to the Schrödinger case. More precisely, $\Lc +f$ is unitarily equivalent to
\[
\Delta - \frac{1}{4}|\nabla U|^2 +\frac{1}{2} \Delta U + f,
\]
which can be studied by the theory of symmetric operators~\cite{helffer2013spectral}.
In both cases, the operator $\Lc + f$ is self-adjoint on a suitable Hilbert space, so that
the Rayleigh formula can be used. It is also possible to study the spectral properties
of~$P_t^f$ when $b\neq - \nabla U$ and~$\X$ is bounded through the Krein--Rutman theorem (see
\textit{e.g.}~\cite[Proposition~1]{ferre2017error}).
To the best of our knowledge, the case $b\neq -\nabla U$ in an unbounded space~$\X$
remains open in general.

Our analysis provides a practical criterion to study the long time behavior of~\eqref{eq:continuous}
through the Lyapunov function techniques developed in Section~\ref{sec:results}. The continuous
counterpart of Assumption~\ref{as:lyapunovgeneral} can be stated in the following simple form.

\begin{assumption}
  \label{as:contlyapunov}
  Let~$(X_t)_{t\geq 0}$ be the dynamics~\eqref{eq:SDE} with generator~\eqref{eq:gen}. There exists
  a~$C^2(\X)$ function $W:\X\to[1,+\infty)$ going to infinity at infinity such that
    \begin{equation}
      \label{eq:contlyapunov}
      \frac{(\Lc + f ) W}{W} \xrightarrow[|x|\to +\infty]{} - \infty.
    \end{equation}
    In addition, there exist a $C^2(\X)$ function $\mathscr{W}:\X\to[1,+\infty)$ and a
      constant $c \geq 0$ such that
      \begin{equation}
        \label{eq:majoLW}
      \varepsilon(x):= \frac{\mathscr{W}(x)}{W(x)}\xrightarrow[|x|\to +\infty]{} 0, \qquad
       \frac{(\Lc + f) \mathscr{W}}{\mathscr{W}}  \leq c.
      \end{equation}
\end{assumption}

Condition~\eqref{eq:contlyapunov} can be checked by direct computations, as shown on some examples
in Section~\ref{sec:contappli}. Finding a function~$\mathscr{W}$ such
that~\eqref{eq:majoLW} holds is usually done by considering Lyapunov functions in an exponential form, \emph{i.e.} $W(x) = \e^{a U(x)}$ for some function~$U:\X\to\R$ and $a>0$, and~$\mathscr{W}(x) = \e^{a' U(x)}$ for $0<a'<a$. We refer for instance to~\cite[Proposition~1]{ferre2019fine} for precise sufficient conditions for~\eqref{eq:majoLW} to hold in this context. 
In the proof of
Theorem~\ref{theo:continuous},~\eqref{eq:contlyapunov}-\eqref{eq:majoLW} are used to control~$P_t^f$
thanks to a Grönwall lemma. It is also important to remark that, in the case
$f = 0$,~\eqref{eq:contlyapunov} is a standard condition for the ergodicity of SDEs and
compactness of the evolution operator~$P_t$,
see~\cite[Theorem 8.9]{bellet2006ergodic}. As in Section~\ref{sec:discrete}, some regularity
of the transition kernel is required. A natural condition in the context of diffusions reads
as follows~\cite[Section 7]{bellet2006ergodic}.

\begin{assumption}
  \label{as:hypoelliptic}
  The function~$\sigma$ is continuous and, for any~$t>0$, the transition
  kernel~$P_t^f$ has a continuous density~$p_t^f$ with respect to the Lebesgue measure,
  that is
  \[
   \forall\, x,y\in\X, \quad P_t^f(x,dy) = p_t^f(x,y)\,dy.
  \]
  Moreover, it holds
  \[
  \forall\,x,y\in\X,\quad p_t^f(x,y)>0.
  \]
\end{assumption}

This assumption is standard for diffusion processes and, as shown in the
proof of Theorem~\ref{theo:continuous}, it implies Assumptions~\ref{as:minogeneral}
and~\ref{as:regularity} in Section~\ref{sec:discrete}.
It holds true in particular for elliptic diffusions with regular coefficients and additive
noise ($b\in C^{\infty}(\X)$ and $\sigma =\id$). For degenerate diffusions, possibly with
multiplicative noise, this result can be obtained through hypoelliptic conditions and
controllability~\cite{stroock1972support,bellet2006ergodic,wu2001large}. This is explained
in detail in~\cite{ferre2019fine} in the context of large deviations, which allows to perform
a similar spectral analysis as the one performed here.

We now state the continuous version of Theorem~\ref{theo:general}.

\begin{theorem}
  \label{theo:continuous}
  Consider the dynamics~\eqref{eq:continuous} induced by the SDE~\eqref{eq:SDE} and suppose
  that Assumptions~\ref{as:contlyapunov} and~\ref{as:hypoelliptic} hold. Then, there exist
  a unique invariant measure~$\mu_f^\star$ and $\kappa >0$ such that, for any initial
  measure $\mu\in\PX$ with $\mu(W)<+\infty$, there is~$C_{\mu}>0$ for which
  \begin{equation}
    \label{eq:continuouscv}
    \forall\,\varphi\in\Linfty_W(\X),\quad \forall\, t >0, \quad
    \big| \Theta_t(\mu)(\varphi) - \mu_f^\star(\varphi) \big| \leq C_{\mu} \, \e^{-\kappa t}
    \| \varphi \|_{\Linfty_W}.
  \end{equation}
  Moreover, the invariant measure satisfies~$\mu_f^\star(W)<+\infty$
  and $\Theta_t(\mu_f^\star) = \mu_f^\star$ for all~$t\geq 0$.
\end{theorem}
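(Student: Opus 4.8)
The plan is to reduce to the discrete-time Theorem~\ref{theo:general} by freezing the time step, applying that result to the kernel $Q^f := P_\tau^f = \e^{\tau(\Lc+f)}$ for a fixed $\tau>0$, and then bootstrapping from the arithmetic sequence of times $k\tau$ to an arbitrary $t>0$. Since $P_{t+s}^f = P_t^f P_s^f$ by the Markov property of $(X_t)_{t\ge0}$, one has $(Q^f)^k = P_{k\tau}^f$ and $\Theta_{k\tau}=\Phi_k$ (the $k$-step Feynman--Kac evolution of Section~\ref{sec:discrete} associated with $Q^f=P_\tau^f$), and moreover $(\Theta_t)_{t\ge0}$ is itself a semigroup because the $P_t^f$ commute. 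It therefore suffices to verify that Assumptions~\ref{as:lyapunovgeneral},~\ref{as:minogeneral} and~\ref{as:regularity} hold for $Q^f=P_\tau^f$.

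First I would derive the discrete Lyapunov condition~\eqref{eq:lyapunovgeneral} from~\eqref{eq:contlyapunov}--\eqref{eq:majoLW} by a Grönwall argument. Condition~\eqref{eq:contlyapunov} gives, for every $a>0$, a compact set $K_a$ and a constant $b_a<+\infty$ with $(\Lc+f)W\le -aW+b_a\ind_{K_a}$ (continuity of $(\Lc+f)W$, from $W\in C^2$ and $b,\sigma,f$ continuous, bounds it on $K_a$). A Dynkin/Itô computation, legitimated by the a priori bound $P_t^f\mathscr W\le \e^{ct}\mathscr W$ coming from the second inequality of~\eqref{eq:majoLW} (which guarantees $P_t^fW<+\infty$ and justifies the localization), yields $P_t^fW\le W+\int_0^t P_s^f\big[(\Lc+f)W\big]\,ds$, hence after Grönwall $P_\tau^fW\le \e^{-a\tau}W+b_a'\mathscr W$ for a constant $b_a'$ depending on $a,\tau,c$. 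Splitting $\mathscr W=\varepsilon W$ with $\varepsilon(x)\to0$ at infinity, one absorbs the part of $b_a'\mathscr W$ where $\varepsilon$ is small into $\gamma W$ (with $\gamma$ as small as wanted by taking $a$ large), the remainder being supported on a compact set; after relabelling the compacts this produces sequences $\gamma_n\to0$, $b_n$ satisfying~\eqref{eq:lyapunovgeneral}. The minorization~\eqref{eq:minogeneral} and regularity (Assumption~\ref{as:regularity}) follow from Assumption~\ref{as:hypoelliptic}: the density $p_\tau^f$ is continuous and strictly positive, hence bounded below by a positive constant on each $K_n\times K_n$, giving~\eqref{eq:minogeneral} with $\eta_n(dy)=\ind_{K_n}(y)\,dy/|K_n|$; $P_\tau^f(\varphi\ind_{K_n})(x)=\int_{K_n}p_\tau^f(x,y)\varphi(y)\,dy$ is continuous in $x$ by dominated convergence; and~\eqref{eq:irreducibility} holds since $\eta_n(\varphi)=0$ for all large $n$ forces $\varphi=0$ Lebesgue-a.e., whence $P_\tau^f\varphi\equiv0$.

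Theorem~\ref{theo:general} applied to $Q^f=P_\tau^f$ then gives a unique $\Theta_\tau$-fixed point $\mu_{f,\tau}^*$ with $\mu_{f,\tau}^*(W)<+\infty$ and $\rho_W\big(\Theta_{k\tau}(\mu),\mu_{f,\tau}^*\big)\le C_\mu\bar\alpha^k$ for $\mu(W)<+\infty$. To upgrade $\mu_{f,\tau}^*$ to an invariant measure of the whole semigroup, observe that for any $s\ge0$ the probability measure $\Theta_s(\mu_{f,\tau}^*)$ is again a $\Theta_\tau$-fixed point (the $\Theta_t$ commute) and still integrates $W$: the Grönwall bound gives $P_s^fW\le C_sW$, so $\Theta_s(\mu_{f,\tau}^*)(W)\le C_s\,\mu_{f,\tau}^*(W)/\mu_{f,\tau}^*(P_s^f\ind)<+\infty$ by Lemma~\ref{lem:preliminary}; uniqueness in Theorem~\ref{theo:general} then forces $\Theta_s(\mu_{f,\tau}^*)=\mu_{f,\tau}^*$ for all $s$. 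In particular $\mu_{f,\tau}^*$ is independent of $\tau$ (for two choices, each is a fixed point with finite $W$-moment of the other's $\Theta_\tau$), and we call it $\mu_f^*$. Finally, for arbitrary $t>0$ write $t=k\tau+r$ with $k=\lfloor t/\tau\rfloor$, $r\in[0,\tau)$; using $\mu_f^*=\Theta_r(\mu_f^*)$ one gets $\Theta_t(\mu)=\Theta_{k\tau}(\Theta_r(\mu))$, hence $\rho_W(\Theta_t(\mu),\mu_f^*)\le C_{\Theta_r(\mu)}\bar\alpha^k$. By~\eqref{eq:Cmulast} the constant $C_{\Theta_r(\mu)}$ depends on the initial measure only through $\Theta_r(\mu)(W)/\Theta_r(\mu)(h)=\mu(P_r^fW)/\mu(P_r^fh)$; the numerator is $\le C_\tau\mu(W)$ uniformly in $r\in[0,\tau]$ by the Grönwall bound, and the denominator is bounded below uniformly in $r\in[0,\tau]$ since $r\mapsto(P_r^fh)(x)$ is continuous with $P_r^fh>0$ everywhere ($p_r^f>0$, $h>0$) and $\mu$ is a probability measure. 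Thus $C_{\Theta_r(\mu)}\le C_\mu'$ uniformly, giving $\rho_W(\Theta_t(\mu),\mu_f^*)\le C_\mu'\bar\alpha^{\lfloor t/\tau\rfloor}\le C_\mu''\,\e^{-\kappa t}$ with $\kappa=-\tau^{-1}\log\bar\alpha>0$; taking the supremum over $\|\varphi\|_{\Linfty_W}\le1$ yields~\eqref{eq:continuouscv}, and uniqueness of the invariant measure follows as in the discrete case from the invariance $\mu_f^*=\Theta_s(\mu_f^*)$ already established.

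I expect the main obstacle to be the Grönwall step: rigorously justifying the Dynkin identity for $P_t^fW$ (finiteness of $P_t^fW$ and the integrability needed for localization) using only the auxiliary function $\mathscr W$, and then extracting sequences with $\gamma_n\to0$ after splitting $\mathscr W=\varepsilon W$. By contrast, the passage from the discrete times $k\tau$ to continuous time is comparatively routine once the uniform-in-$r$ lower bound on $\mu(P_r^fh)$ is secured, which is why the condition $p_t^f>0$ of Assumption~\ref{as:hypoelliptic} is used there.
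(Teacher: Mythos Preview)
Your strategy coincides with the paper's: freeze $\tau>0$, verify Assumptions~\ref{as:lyapunovgeneral}--\ref{as:regularity} for $Q^f=P_\tau^f$ from Assumptions~\ref{as:contlyapunov} and~\ref{as:hypoelliptic}, apply Theorem~\ref{theo:general}, and then bridge from the grid $k\tau$ to arbitrary $t$ by controlling the prefactor uniformly in the residual time $r\in[0,\tau)$. Two points of execution differ. First, for the uniform lower bound on $\mu(P_r^fh)$ the paper notes that $h$ is the common eigenvector of the whole semigroup, so $P_r^fh=\e^{r\alpha}h$ for some $\alpha\in\R$ and the bound $\mu(P_r^fh)\ge \e^{-\tau|\alpha|}\mu(h)$ is immediate; your continuity argument is correct but requires an extra dominated-convergence step (via $P_r^fh\le \|h\|_{\Linfty_W}P_r^fW\le C_\tau W$) to turn pointwise continuity of $r\mapsto P_r^fh(x)$ into continuity of $r\mapsto\mu(P_r^fh)$ on the compact interval $[0,\tau]$. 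Second, for the $\tau$-independence of the limit the paper argues by triangle inequality after running the convergence for two time steps $t_0,t_1$, while you observe directly that $\Theta_s(\mu_{f,\tau}^*)$ is again a $\Theta_\tau$-fixed point integrating $W$ and invoke uniqueness; your route is slightly cleaner. Finally, the paper makes the key bound $P_t^f\ind\le \e^{ct}\mathscr W$ explicit through an It\^o computation showing that $M_t=\mathscr W(X_t)\exp\!\big(-\!\int_0^t(\Lc\mathscr W/\mathscr W)(X_s)\,ds\big)$ is a nonnegative local martingale, hence a supermartingale by Fatou; this is exactly the step you flag as the main obstacle, and your parenthetical that the $\mathscr W$-bound ``guarantees $P_t^fW<+\infty$'' is not quite right as stated---finiteness of $P_t^fW$ comes \emph{a posteriori} from the Gr\"onwall estimate itself, after a standard localisation by stopping times.
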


\begin{proof}
  The idea of the proof is to show that, for any $t>0$, the evolution operator
  \[
  \big(P_{t}^f\varphi\big)(x) = \E_x\left[ \varphi(X_{t})\, \e^{\int_0^{t}f(X_s)\,ds}
    \right]
  \]
  satisfies the assumptions of Theorem~\ref{theo:general}.

  \textbf{Step 1: Minorization and regularity.}
  We first show that, by Assumption~\ref{as:hypoelliptic},~$P_t^f$ satisfies
  Assumptions~\ref{as:minogeneral} and~\ref{as:regularity}. A first
  remark is that, since~$P_t^f$ is assumed to have a continuous density with respect to
  the Lebesgue measure, Assumption~\ref{as:regularity} immediately holds.

  It is enough to prove the minorization condition (Assumption~\ref{as:minogeneral})
  for measurable subsets of $\X=\R^d$. Consider the compact
  sets $K_n=B(0,n)$, \textit{i.e.} the balls centered at~$0$ with radius $n\geq 1$. For
  a measurable set $S\subset\R^d$ and $n\geq 1$, we have, for all $x\in K_n$,
  \begin{equation}
    \label{eq:minocont}
  (P_t^f\ind_S)(x)= \int_S p_t^f(x,y)\,dy \geq  \int_{S\cap K_n} p_t^f(x,y)\,dy \geq
  \Big( \inf_{x,y\in K_n}\, p_t^f(x,y)\Big)\ | S\cap K_n|,
  \end{equation}
  where we denote by~$|A|$ the Lebesgue measure of a measurable set $A\subset \R^d$. 
  As a result,~\eqref{eq:minogeneral} holds for all $n\geq 1$ with
  \[
  \eta_n(S) = \frac{|S\cap K_n|}{|K_n|},\qquad \alpha_n  = |K_n|\Big( \inf_{x,y\in K_n} p_t^f(x,y)\Big) >0. 
  \]
  Finally, let us check that~\eqref{eq:irreducibility} is satisfied. Take $\varphi\in\Linfty_W(\X)$
  with $\varphi\geq 0$ such that
  \[
  \eta_n(\varphi)=\frac{1}{|K_n|}\int_{K_n}\varphi(x)\,dx = 0,
  \]
  for any $n\geq n_0$ for an arbitrary $n_0\geq 1$. Since for any compact set~$K\subset\X$
  there exists~$m\geq 1$ such that~$K\subset K_m$, this implies that $\varphi=0$
  almost everywhere, so $Q^f\varphi =0$ everywhere since~$Q^f$ has a continuous
  density with respect to the Lebesgue measure. Therefore,
  Assumption~\ref{as:minogeneral} is satisfied.

  \textbf{Step 2: Lyapunov condition.}
  Let us now show that Assumption~\ref{as:lyapunovgeneral} holds.  First,
  Assumption~\ref{as:contlyapunov} is equivalent to the existence of positive
  sequences~$(a_n)_{n\in\N}$, $(b_n)_{n\in\N}$ such that
  \begin{equation}
    \label{eq:lyapunovab}
  (\Lc + f ) W \leq -a_n W + b_n,
  \end{equation}
  with $a_n\to +\infty$ as $n\to +\infty$. We then compute, for
  any $t>0$ and $n\in\N$,
  \begin{equation}
    \label{eq:dPtf}
  \frac{d}{dt}\left( \e^{a_n t} P_t^f W\right)= \e^{a_n t} P_t^f
  \big( a_n W + (\Lc + f) W  \big)
  \leq b_n \e^{a_n t} P_t^f \ind.
  \end{equation}
  We can now bound the right hand side of the above expression using~\eqref{eq:majoLW}.
  Since~$\mathscr{W}\geq 1$,
    \begin{equation}
      \label{eq:Ptf}
  \big(P_t^f\ind\big) (x) = \E_x\left[ \e^{\int_0^t f(X_s) \, ds} \right]\leq
  \E_x\left[ \mathscr{W}(X_t)\, \e^{\int_0^t f(X_s) \, ds} \right].
  \end{equation}
  From the second condition in~\eqref{eq:majoLW},~\eqref{eq:Ptf} becomes
  \[
  \big( P_t^f\ind\big) (x) \leq \e^{ct}\, \E_x\left[ \mathscr{W}(X_t)\,
    \e^{-\int_0^t \frac{\Lc \mathscr{W}}{\mathscr{W}}(X_s) \, ds} \right].
  \]
  Inspired by a similar calculation in~\cite{wu2001large}, we see that the right
  hand side of the above equation is a supermartingale. Indeed, introducing
  \[
  M_t =  \mathscr{W}(X_t)\,\e^{-\int_0^t  \frac{\Lc \mathscr{W}}{\mathscr{W}}(X_s) \, ds},
  \]
  Itô formula shows that
  \[
  d M_t = \e^{-\int_0^t \frac{\Lc \mathscr{W}}{\mathscr{W}}(X_s) \, ds}\,
  \nabla \mathscr{W}^T(X_t) \sigma(X_t) \,dB_t,
  \]
  so that $M_t$ is a local martingale (using that~$\sigma$ and~$b$ are locally Lipschitz hence
  continuous and~$\mathscr{W}$ has a continuous derivative,
  see~\cite[Chapter~3, Proposition~2.24]{karatzas2012brownian}).
  %We have a local martingale, hence a martingale for a sequence of stopping times
  % going to infinity. Use Fatou lemma's to invert the limit of the stopping times
  % and the expectation to obtain the supermartingale property. 
  Since~$M_t$ is nonnegative, it is a supermartingale by Fatou's lemma. As a result,
  $\E_x[M_t]\leq M_0 = \mathscr{W}(x)$. The inequality~\eqref{eq:Ptf} then becomes
  \[
  \big(P_t^f\ind\big) (x) \leq \e^{ct}\, \E_x\left[ M_t \right]\leq  \,\e^{ct}\, \mathscr{W}(x).
  \]

  Coming back to~\eqref{eq:dPtf}, we obtain
  \[
  \frac{d}{dt}\left( \e^{a_n t} P_t^f W\right)
  \leq  b_n \e^{(a_n +c) t}\, \mathscr{W}.
  \]
  Integrating in time,
  \[
  \big( \e^{a_n t}P_t^f W - W\big)(x) \leq  b_n \frac{ \e^{(a_n + c)t}}{a_n + c}\mathscr{W}(x).
  \]
  As a result
  \begin{equation}
    \label{eq:Ptfan}
  P_t^f W(x) \leq \widetilde{\gamma}_n W(x) + c_n\mathscr{W}(x),
  \end{equation}
  with
  \[
  \widetilde{\gamma}_n = \e^{-a_n t}, 
  \quad c_n = \frac{ b_n\, \e^{c t} }{a_n + c}\geq 0.
  \]
  At this stage,~\eqref{eq:lyapunovgeneral} holds with the indicator function replaced
  by the function~$\mathscr{W}$. However, using the first condition in~\eqref{eq:majoLW},
  we can find a compact set~$K_n$ such that $c_n \varepsilon(x) \leq \widetilde{\gamma}_n$
  outside~$K_n$. Using this set and~$\mathscr{W} = \varepsilon W$,~\eqref{eq:Ptfan} becomes
  \[
  \begin{aligned}
  P_t^f W(x)  & \leq \widetilde{\gamma}_n W(x) + c_n \ind_{K_n}(x) \mathscr{W}(x)
  +  c_n \varepsilon(x) W(x) \ind_{K_n^c}(x)
  \\ &\leq 2 \widetilde{\gamma}_n W(x) +  c_n\left( \sup_{K_n}\mathscr{W}\right) \ind_{K_n}(x).
  \end{aligned}
  \]
  Setting $\gamma_n = 2 \widetilde{\gamma}_n$ and $b_n= c_n \sup_{K_n}\mathscr{W}$, we see
  that 
  \begin{equation}
    \label{eq:Ptfbound}
  P_t^f W \leq \gamma_n W  +  b_n \ind_{K_n},
  \end{equation}
  with $\gamma_n\to 0$ as $n\to +\infty$. This means that $P_t^f$ satisfies
  Assumption~\ref{as:lyapunovgeneral}, and hence fullfils
  all the assumptions of Theorem~\ref{theo:general}.

  \textbf{Step 3: using Theorem~\ref{theo:general}.}
  We now use that~$P_t^f$ satisfies the assumptions of Theorem~\ref{theo:general}
  to conclude the proof. Fix $t_0>0$. There exist a unique
  measure~$\mu_{f,t_0}^\star$ and a constant $\kappa_{t_0} >0$ such that for any $\mu\in\PX$ with
  $\mu(W)<+\infty$, it holds (with the constant $C_{\mu}>0$ defined in~\eqref{eq:Cmulast})
  \[
   \forall\, \varphi\in\Linfty_W(\X), \quad 
 \forall\, k\geq 1,\qquad
  \left| \frac{\mu\big( (P_{t_0}^f)^k\varphi \big)}{\mu\big( (P_{t_0}^f)^k\ind \big)} 
  - \mu_{f,t_0}^\star(\varphi) \right| \leq C_{\mu}\, \e^{-k \kappa_{t_0}} \| \varphi \|_{\Linfty_W}.
  \]
  We next show that~\eqref{eq:continuouscv} can be obtained for any $t>0$ (and not
  only multiples of~$t_0$) and that the invariant measure~$\mu_{f,t_0}^\star$ actually does
  not depend on~$t_0$. This follows by a standard time decomposition
  argument~\cite{lelievre2016partial,herzog2017ergodicity}. Indeed, for
  any $t>0$, we set $t = k t_0 + r$ with $r \in [0, t_0)$, and we use the semigroup property to obtain
    \[
    \Theta_t(\mu)(\varphi)= \Theta_{k t_0}\left( \frac{\mu P_r^f}{\mu(P_r^f\ind)}\right)(\varphi)
    = \frac{\mu_r\big( (P_{t_0}^f)^k\varphi \big)}{\mu_r\big( (P_{t_0}^f)^k\ind \big)},
    \]
    where we defined $\mu_r$ as
    \[
    \mu_r(\varphi)=\frac{\mu( P_r^f\varphi) }{\mu(P_r^f\ind)}.
    \]
    We then only need to control the familly of initial distributions~$(\mu_r)_{r\in[0,t_0)}$.
    Step~$1$ in the proof shows that~$\mu(P_r^f\ind) >0$ (using~\eqref{eq:minocont}).
    Then, in view of~\eqref{eq:Ptfbound}, the evolution operator~$P_r^f$
    maps~$\Linfty_W(\X)$ to~$\Linfty_W(\X)$ for any $r >0$, so $\mu_r(W)<+\infty$ and
    thus~$\mu_r$ defines an admissible initial condition in Theorem~\ref{theo:general}. This leads to:
    \begin{equation}
      \label{eq:intconvtheta}
    \forall\,\varphi\in\Linfty_W(\X),\quad \forall\, t >0, \quad
    \big| \Theta_t(\mu)(\varphi) - \mu_{f,t_0}^\star(\varphi) \big| \leq
    \left(\underset{r\in[0,t_0)} {\sup} C_{\mu_r}\right) \, \e^{-\kappa_{t_0} \frac{t}{t_0}}
      \| \varphi \|_{\Linfty_W},
    \end{equation}
    where the constant~$C_{\mu_r}$ is given in~\eqref{eq:Cmulast}. In view
    of~\eqref{eq:Cmulast}, it remains to bound
    \begin{equation}
      \label{eq:supCmu}
    \underset{r\in[0,t_0)} {\sup}\ \frac{\mu_r(W)}{\mu_r(h)} = \underset{r\in[0,t_0)} {\sup}\
        \frac{\mu(P_r^f W)}{\mu(P_r^f h_{t_0})},
    \end{equation}
    where~$h_{t_0}$ is the principal eigenvector associated to~$P_{t_0}^f$
    with eigenvalue~$\Lambda_{t_0}$ (using Lemma~\ref{lem:specQV}). The numerator in the latter
    expression is easily bounded uniformly in~$r$ using~\eqref{eq:Ptfbound}. Standard semigroup
    analysis shows that $h_{t_0}=h$ does not depend on~$t_0$ and~$\Lambda_{t_0}=\e^{t_0\alpha}$
    for some~$\alpha\in\R$. Therefore, for any~$r\in [0,t_0)$,
    $P_r^f h_{t_0} = \e^{r \alpha}h$, and the denominator in~\eqref{eq:supCmu} is bounded
    away from~$0$ independently on~$r$.

    We finally prove that the invariant measure~$\mu^\star_{f,t_0}$ does not depend on~$t_0$.
    Following the same procedure for another time~$t_1 >0$ shows
    that~\eqref{eq:intconvtheta} holds with an invariant measure~$\mu_{f,t_1}^\star$. Then,
    for any~$\varphi\in\Linfty_W(\X)$, $\mu\in\PX$ with $\mu(W)<+\infty$ and~$t>0$ we have
        \[
        \begin{aligned}
          \big| \mu_{f,t_0}^\star(\varphi) - \mu_{f,t_1}^\star(\varphi)\big| & \leq
          \big| \Theta_t(\mu)(\varphi) - \mu_{f,t_0}^\star(\varphi) \big|
          +     \big| \Theta_t(\mu)(\varphi) - \mu_{f,t_1}^\star(\varphi) \big|
        \\ & \leq
        \left(\underset{r\in[0,t_0)} \sup C_{\mu_r}\right) \, \e^{-\kappa_{t_0} \frac{t}{t_0}}
          \| \varphi \|_{\Linfty_W}
          + \left(\underset{r\in[0,t_1)} \sup C_{\mu_r}\right) \, \e^{-\kappa_{t_1} \frac{t}{t_1}}
            \| \varphi \|_{\Linfty_W}.
        \end{aligned}
      \]
      Taking the limit~$t\to +\infty$ on the right hand side shows
      that~$\mu_{f,t_0}^\star=\mu_{f,t_1}^\star$, so the invariant measure is independent of the
      arbitrary time~$t_0$. This concludes the proof of Theorem~\ref{theo:continuous}.
\end{proof}

We close this section by mentioning that, under the assumptions of Theorem~\ref{theo:continuous},
it is also possible to define the logarithmic spectral radius of the dynamics
as in Theorem~\ref{prop:SCGF}, which reads in this case
\[
\lambda = \underset{t\to+\infty}{\lim}\, \frac{1}{t} \log\, \E_{\mu}\Big[ \e^{\int_0^t f(X_s)\,ds}
  \Big],
\]
for any initial measure~$\mu$ that satisfies $\mu(W)<+\infty$.
We do not reproduce the proof of this result which is similar to that of Theorem~\ref{prop:SCGF},
and refer to~\cite{ferre2019fine} for more results on the cumulant function~$\lambda$.

%%%%%%%%%%%%%%%%%%%%%%%%%%%%%%%%%%%%%%%%%%%%%%%

\section{Applications}
\label{sec:applications}

Since our study was first motivated by practical situations, we provide in this section a
number of finite dimensional examples where our framework provides simple criteria for proving
convergence of the Feynman--Kac semigroup towards an invariant measure.
Sections~\ref{sec:discrappli} and~\ref{sec:contappli} are concerned with discrete and
continuous time applications respectively.
Section~\ref{sec:discretization} presents a convergence result for numerical discretizations
of~\eqref{eq:continuous}, where convergence rates are uniform in the time step.

\subsection{Examples in discrete time}
\label{sec:discrappli}
In this section, we provide two typical examples of Markov chains for which our results apply.
First of all, let us consider the Diffusion Monte Carlo case where $f = -V$
and~$V$ stands for a Schrödinger potential.

\begin{prop}
  \label{prop:DMC}
  Consider a weighted evolution operator $Q^V=\e^{-V}Q$ in $\X=\R^d$ with Gaussian increments
  $Q(x,dy)=(2\pi\sigma^2)^{-\frac{d}{2}}\e^{- \frac{(x-y)^2}{2\sigma^2}}dy$, and where~$V$ is a continuous function.
  Then, if $V(x)\to+\infty$ when $|x|\to +\infty$, $W(x)=\ind$ is a Lyapunov function for~$Q^V$
  in the sense of Assumption~\ref{as:lyapunovgeneral}.
  Moreover, if there exist constants $a>0$ and $c\in\R$ such that 
  \begin{equation}
    \label{eq:Vmino}
    V(x)\geq a |x|^2 - c,
  \end{equation}
  then $W(x)=\e^{\beta x^2}$ is a Lyapunov function for
  \[
  0 < \beta < \frac{a}{2}\left( \sqrt{1+\frac{2}{a \sigma^2}} -1\right).
  \]
  Finally, Assumptions~\ref{as:minogeneral} and~\ref{as:regularity} hold true, so that
  Theorem~\ref{theo:general} applies for these choices of Lyapunov function.
\end{prop}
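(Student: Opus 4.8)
The plan is to verify Assumptions~\ref{as:lyapunovgeneral},~\ref{as:minogeneral} and~\ref{as:regularity} separately, exploiting that $Q^V$ has the explicit, smooth, everywhere positive density
\[
p^V(x,y) = \e^{-V(x)}(2\pi\sigma^2)^{-d/2}\e^{-\frac{|x-y|^2}{2\sigma^2}}.
\]
The heart of the matter is the Lyapunov estimate, and the key remark is that $Q$ is Markov, so $Q\ind=\ind$ and hence $Q^V W = \e^{-V}\,QW$ for every $W$.

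For $W=\ind$: one gets $Q^V\ind=\e^{-V}\to 0$ as $|x|\to+\infty$. Given the fixed exhausting family $(K_n)_{n\ge1}$, I would take $\gamma_n:=\sup_{K_n^c}\e^{-V}$ and $b_n:=\sup_{K_n}\e^{-V}$; the former tends to $0$ because $V\to+\infty$ at infinity and every compact is eventually contained in some $K_n$, and the latter is finite by continuity, so~\eqref{eq:lyapunovgeneral} holds. For $W(x)=\e^{\beta|x|^2}$, I would compute the Gaussian integral $QW(x)$ by completing the square: it is finite precisely for $\beta<\tfrac1{2\sigma^2}$ and equals $(1-2\sigma^2\beta)^{-d/2}\exp\!\big(\tfrac{\beta}{1-2\sigma^2\beta}|x|^2\big)$. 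Using $V(x)\ge a|x|^2-c$ then yields
\[
\frac{Q^V W(x)}{W(x)}\le \e^{c}(1-2\sigma^2\beta)^{-d/2}\exp\!\Big(\Big(\tfrac{2\sigma^2\beta^2}{1-2\sigma^2\beta}-a\Big)|x|^2\Big),
\]
which tends to $0$ at infinity exactly when $\tfrac{2\sigma^2\beta^2}{1-2\sigma^2\beta}<a$, i.e.\ $\beta^2+a\beta-\tfrac{a}{2\sigma^2}<0$, whose positive root is precisely $\tfrac a2\big(\sqrt{1+\tfrac2{a\sigma^2}}-1\big)$. One checks this root is below $\tfrac1{2\sigma^2}$, so the integrability constraint is automatic on the stated range of $\beta$. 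The Lyapunov inequality then follows as in the case $W=\ind$, with $\gamma_n=\sup_{K_n^c}(Q^V W/W)$ and $b_n=\sup_{K_n}Q^V W$.

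For Assumptions~\ref{as:minogeneral} and~\ref{as:regularity}: since $p^V$ is jointly continuous, bounded and strictly positive, we are in the setting of Remark~\ref{rem:as}. Explicitly, take $\eta_n(dy)=\ind_{K_n}(y)\,dy/|K_n|$; for $x\in K_n$ and measurable $S$, $Q^V(x,S)\ge\int_{S\cap K_n}p^V(x,y)\,dy\ge\big(\inf_{K_n\times K_n}p^V\big)|S\cap K_n|$, giving~\eqref{eq:minogeneral} with $\alpha_n=|K_n|\inf_{K_n\times K_n}p^V>0$. The irreducibility~\eqref{eq:irreducibility} holds since $\eta_n(\varphi)=0$ for all $n\ge n_0$ forces $\varphi=0$ Lebesgue-almost everywhere (the $K_n$ exhaust $\R^d$), whence $Q^V\varphi\equiv 0$. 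Finally, for $\varphi$ bounded on $K_n$, dominated convergence against the bounded continuous kernel shows $x\mapsto\int_{K_n}\varphi(y)p^V(x,y)\,dy$ is continuous, so $Q^V$ is strong Feller on $K_n$. Theorem~\ref{theo:general} then applies.

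The only non-routine point is the explicit Gaussian computation of $QW$ for $W=\e^{\beta|x|^2}$ together with the algebra pinning down the sharp threshold, where care is needed to track the convergence constraint $\beta<1/(2\sigma^2)$ and to confirm it is implied by the stated bound on $\beta$.
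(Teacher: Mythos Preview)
Your proof is correct and follows essentially the same approach as the paper: the Gaussian computation of $QW$, the algebra yielding the threshold $\tfrac{a}{2}\big(\sqrt{1+\tfrac{2}{a\sigma^2}}-1\big)$, the check that this threshold lies below $\tfrac{1}{2\sigma^2}$, and the verification of the minorization/irreducibility/regularity via the explicit positive continuous density are all handled the same way. The only cosmetic difference is that the paper writes the minorization constant using the diameter $D_{K_n}$ and $\sup_{K_n}V$ separately, whereas you use $\inf_{K_n\times K_n}p^V$ directly; both are equivalent.
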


The interpretation of this result is the following. In the Diffusion Monte Carlo setting,
the confinement cannot be provided by the dynamics, since it is a Gaussian random walk over~$\R^d$.
However, the external potential~$V$ gives a small weight to the trajectories going to infinity,
which makes the dynamics stable. If more information is available on the growth of~$V$,
we obtain better integrability results for the invariant measure~$\mu_V^\star$ through Lyapunov
functions growing faster at infinity.

\begin{proof}
  Let us first check that $W= \ind$ is a Lyapunov function when~$V$ goes to infinity
  at infinity. Note that, for any compact set $K\subset\R^d$,
  \[
  \big(Q^V\ind\big)(x) = \e^{-V(x)}= \ind_{K^c}(x)\, \e^{-V(x)} + \ind_{K}(x)\, \e^{-V(x)}.
  \]
  Taking an increasing sequence of compact sets~$K_n$ (in the sense of inclusion) and
  setting $\gamma_n= \sup_{K_n^c}\e^{-V}$, $b_n=\sup_{K_n}\e^{-V} < +\infty$, we obtain
  \[
  Q^V\ind \leq \gamma_n \ind + b_n \ind_{K_n},
  \]
  which proves the first assertion since $\gamma_n\to 0$ as $n\to + \infty$.

  Let us now assume that~\eqref{eq:Vmino} holds.
  Setting $W(x)=\e^{\beta x^2}$, under the condition
  \begin{equation}
    \label{eq:condbeta}
  \beta < \frac{1}{2\sigma^2},
  \end{equation}
  an easy computation shows that
  \[
  QW(x) = \frac{\e^{\frac{\beta}{1- 2 \beta \sigma^2}x^2}}{(1 - 2\beta\sigma^2)^{\frac{d}{2}}}.
  \]
  We remark that~$W$ is not a Lyapunov function for~$Q$ since $1- 2 \beta \sigma^2 < 1$.
  However, setting \[
  C_d=(1 - 2\beta\sigma^2)^{-\frac{d}{2}},
  \]
  we have
  \[
  Q^VW(x) =C_d\, \e^{-V(x) + \frac{\beta}{1- 2 \beta \sigma^2}x^2}
  \leq C_d \, \e^{c - a x^2 + \frac{\beta}{1- 2 \beta \sigma^2}x^2 - \beta x^2}W(x)
  = C_d'\, \e^{ - a x^2 + \frac{2\beta^2\sigma^2}{1- 2 \beta \sigma^2}x^2}W(x),
  \]
  with $C_d'=C_d\,\e^c$. One can then check that the choice
  \begin{equation}
    \label{eq:condbetaa}
  0 < \beta < \frac{a}{2}\left( \sqrt{1+\frac{2}{a \sigma^2}} -1\right)
  \end{equation}
  leads to
  \[
  - a + \frac{2 \beta^2\sigma^2}{1- 2 \beta \sigma^2} < 0.
  \]
  Note that, since
  \[
  \frac{a}{2}\left( \sqrt{1+\frac{2}{a \sigma^2}} -1\right) < \frac{1}{2\sigma^2},
  \]
  the condition~\eqref{eq:condbeta} is automatically satisfied when~$\beta$ is chosen according
  to~\eqref{eq:condbetaa}. Next, when~$\beta$ satisfies~\eqref{eq:condbetaa}, the
  function
  \[
  \varepsilon(x)= \e^{ - a x^2 + \frac{\beta^2\sigma^2}{1- 2 \beta \sigma^2}x^2}
  \]
  tends to zero at infinity. Therefore, taking increasing compact sets $K_n$ (such as balls
  of increasing radii),
  \[
  (Q^V W)(x) = \ind_{K_n^c}(x) \varepsilon(x) W(x) + \ind_{K_n}(x) \varepsilon(x) W(x)
  \leq \gamma_n W(x) + b_n \ind_{K_n}(x),
  \]
  with $\gamma_n = \sup_{K_n^c} \varepsilon \to 0$ as $n\to+\infty$ and
  $b_n = \sup_{K_n} \varepsilon W<+\infty$.
  Hence~$W$ is a Lyapunov function for~$Q^V$ for this choice of~$\beta$, \textit{i.e.}
  Assumption~\ref{as:lyapunovgeneral} is satisfied.

  Assumption~\ref{as:regularity} is easily seen to hold. It therefore suffices to prove
  the minorization condition (Assumption~\ref{as:minogeneral}). Take a compact set~$K$ with
  non zero Lebesgue measure, and let us first show
  that the condition of Assumption~\ref{as:minogeneral} holds for~$Q$. It is enough to
  prove the condition for the indicator function of any borel set $S\subset \X$. 
  Denoting by $D_K=\sup \{|x-y|,\, x\in K, \, y\in K\}$ the diameter of~$K$,
  we compute for any $x\in K$
  \[\setlength{\jot}{10pt}
  \begin{aligned}
    (Q\ind_S)(x) = Q(x,S) & = \int_S \e^{ - \frac{ (x-y)^2}{2\sigma^2}}\, dy
      \geq \int_{S\cap K} \e^{ - \frac{ (x-y)^2}{2\sigma^2}}\, dy
      \geq \inf_{x\in K}\, \int_{S\cap K} \e^{ - \frac{ (x-y)^2}{2\sigma^2}}\, dy
     \\
    &  \geq \e^{ - \frac{ D_K^2}{2\sigma^2}}   \int_{S\cap K}\, dy
     \geq \e^{ - \frac{ D_K^2}{2\sigma^2}}\,   |S\cap K|,
  \end{aligned}
  \]
  where we denote again by~$|A|$ the Lebesgue measure of a measurable set $A\subset \R^d$.
  This motivates defining
  \[
  \alpha_K = \e^{ - \frac{ D_K^2}{2\sigma^2}} |K|>0, \quad
  \eta_K(S) = \frac{|S\cap K| }{|K|}.
  \]
  Note also that, since $|K| \in ( 0,+\infty)$, $\eta_K$ is a probability measure. Finally,
  since~$V$ is continuous,
  \[
  \forall\, x\in K, \quad Q^V(x , \cdot) \geq \alpha_V \eta_K(\cdot),
  \]
  with $\alpha_V = \alpha_K\, \e^{ - \sup_K V} > 0$. Choosing $K_n=B(0,n)$ the centered
  balls of radius~$n$, we see that~\eqref{eq:irreducibility} holds using arguments similar 
  to the ones used for the proof of Theorem~\ref{theo:continuous},
  hence $Q^V$ satisfies Assumption~\ref{as:minogeneral}.
\end{proof}

We now provide an example where the dynamics~$Q$ admits a Lyapunov function~$W$ in the
sense of the condition~\eqref{eq:lyapunov} recalled in Appendix~\ref{sec:tools}, and this
function is also a Lyapunov function for~$Q^f$ when~$f$ does not grow too fast.

\begin{prop}
  Consider the dynamics corresponding to a discrete Ornstein--Uhlenbeck process
  in~$\R^d$, namely
  \[
  x_{k+1}= \rho x_k + \sigma G_k,
  \]
  where $\rho\in(-1,1)$, $\sigma \in\R$ and~$(G_k)_{k\geq1}$ is a familly of independent
  standard $d$-dimensional
  Gaussian random variables. Define the operator $Q^f=\e^{f}Q$ with~$f$ a continous
  function such that there exist constants $a>0$, $c\geq 0$, $0\leq p < 2$
  for which $f(x) \leq  a |x|^p + c$.

  Then, the Feynman--Kac dynamics associated to~$Q^f$ satisfies the assumptions of
  Theorem~\ref{theo:general} with Lyapunov function $W(x) = \e^{\beta x^2}$ when
  \[
  0 < \beta < \frac{1-\rho^2}{2\sigma^2}.
  \]
\end{prop}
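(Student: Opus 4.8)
The plan is to mimic the proof of Proposition~\ref{prop:DMC}, the one genuinely new ingredient being the action of the autoregressive kernel $Q$ on $W(x)=\e^{\beta x^2}$. Since $x_{k+1}=\rho x_k+\sigma G_k$ with $G_k$ a standard $d$-dimensional Gaussian, $Q(x,\cdot)$ is the Gaussian distribution with mean $\rho x$ and variance $\sigma^2$ in each coordinate, and a direct Gaussian computation (using independence of the coordinates) gives, for every $\beta$ with $2\beta\sigma^2<1$,
\[
  QW(x)=(1-2\beta\sigma^2)^{-\frac{d}{2}}\,\e^{\frac{\beta\rho^2}{1-2\beta\sigma^2}x^2},
  \qquad\text{hence}\qquad
  QW(x)\leq(1-2\beta\sigma^2)^{-\frac{d}{2}}\,\e^{-\delta x^2}\,W(x),
\]
with $\delta:=\beta\,\dfrac{1-\rho^2-2\beta\sigma^2}{1-2\beta\sigma^2}$. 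The number $\delta$ is strictly positive exactly when $\beta<\dfrac{1-\rho^2}{2\sigma^2}$, and this same bound forces $2\beta\sigma^2<1$ (it is saturated only when $\rho=0$), so the Gaussian moment above is finite. In particular, for $f\equiv 0$ this already shows that $W$ (which is $\geq 1$ and continuous, hence bounded on compacts) is a Lyapunov function for $Q$ in the usual Markov-chain sense of Appendix~\ref{sec:tools}.

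Next I would reintroduce the weight. Writing $Q^f=\e^{f}Q$ and using $f(x)\leq a|x|^p+c$,
\[
  Q^f W(x)\leq(1-2\beta\sigma^2)^{-\frac{d}{2}}\e^{c}\;\e^{a|x|^p-\delta x^2}\,W(x)=:\varepsilon(x)\,W(x).
\]
Because $p<2$, the Gaussian confinement $\e^{-\delta x^2}$ dominates the subquadratic growth $\e^{a|x|^p}$, so $\varepsilon(x)\to 0$ as $|x|\to+\infty$. Taking $K_n=B(0,n)$ and splitting $\varepsilon W=\ind_{K_n^c}\varepsilon W+\ind_{K_n}\varepsilon W$, one gets $Q^fW\leq\gamma_n W+b_n\ind_{K_n}$ with $\gamma_n:=\sup_{K_n^c}\varepsilon\to 0$ and $b_n:=\sup_{K_n}\varepsilon W<+\infty$, which is precisely Assumption~\ref{as:lyapunovgeneral}.

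For the remaining hypotheses I would argue exactly as in Proposition~\ref{prop:DMC} and Remark~\ref{rem:as}. The operator $Q^f$ has the transition density $p^f(x,y)=\e^{f(x)}(2\pi\sigma^2)^{-d/2}\e^{-(y-\rho x)^2/(2\sigma^2)}$ with respect to the Lebesgue measure, which is jointly continuous in $(x,y)$ and strictly positive; this gives Assumption~\ref{as:regularity} at once, and Assumption~\ref{as:minogeneral} follows with $\eta_n(dy)=\ind_{K_n}(y)\,dy/|K_n|$ and $\alpha_n=|K_n|\inf_{x,y\in K_n}p^f(x,y)>0$, the irreducibility~\eqref{eq:irreducibility} being read off from the positivity and continuity of $p^f$ as at the end of the proof of Theorem~\ref{theo:continuous}. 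Theorem~\ref{theo:general} then applies.

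There is no serious obstacle here: the proof reduces to one Gaussian moment computation. The only point requiring care is that the convergence of that Gaussian integral and the negativity of the coefficient of $x^2$ in $Q^fW/W$ are governed by the same threshold $\frac{1-\rho^2}{2\sigma^2}$, and that it is exactly the hypothesis $p<2$ which lets the exponential confinement coming from $Q$ absorb the growth of $f$.
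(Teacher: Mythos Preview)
Your argument is correct and follows the same route as the paper: the Gaussian moment computation for $QW$ with $W(x)=\e^{\beta x^2}$, the identification of the threshold $\beta<(1-\rho^2)/(2\sigma^2)$ making the coefficient of $x^2$ in $\log(Q^fW/W)$ negative, the use of $p<2$ to get $\varepsilon(x)\to 0$, and then the standard splitting over $K_n=B(0,n)$ to verify Assumption~\ref{as:lyapunovgeneral}. Your treatment of Assumptions~\ref{as:minogeneral} and~\ref{as:regularity} via the explicit positive continuous density is exactly what the paper invokes by referring back to Proposition~\ref{prop:DMC}.
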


The interpretation of this result is quite different from the interpretation of
Proposition~\ref{prop:DMC}. Here, the confinement is provided by the
dynamics itself, and the weight~$f$ has to be controlled by the Lyapunov function of the dynamics.
In that case it is important to find a <<strong enough>> Lyapunov function in order for this
control to be possible. Quite typically, if~$f$ is unbounded, $W(x)=x^2$ is a Lyapunov
function for~$Q$, but not for~$Q^f$. On the other hand, if~$f$ is bounded above, the result is
straightforward.

\begin{proof}
  We set $W(x)=\e^{\beta x^2}$ and first compute
  \[
  QW(x)  = \E \left[ W(x_{k+1}) \, \big| \, x_k = x 
    \right]  = \E_G \left[ \e^{ \beta | \rho x + \sigma G |^2}
    \right]
  = \e^{\beta \rho^2 x^2} \E_G \left[
    \e^{\beta ( 2\sigma\rho x G + \sigma^2 G^2)}
    \right].
  \]
  For $\beta < 1/(2\sigma^2)$, an easy computation similar to that of
  Proposition~\ref{prop:DMC} shows that
  \[
  QW(x)=\frac{1}{(1-2\beta \sigma^2)^{\frac{d}{2}}}\e^{\frac{\rho^2}{1-2\beta\sigma^2} \beta x^2}.
  \]
  Define now
  \[
  \delta_{\beta}=\frac{\rho^2}{1-2\beta\sigma^2}.
  \]
  Then $\delta_{\beta} \in (0, 1)$ and $1 - 2\beta\sigma^2 >0$ when
  \[
  \beta \in \Big( 0, \frac{1-\rho^2}{2\sigma^2}\Big).
  \]
  This leads to
  \[
  \e^{f(x)}QW(x) = \frac{1}{(1-2\beta \sigma^2)^{\frac{d}{2}}} \e^{f(x) + (\delta_{\beta}
    -1 )x^2} W(x)
  \leq  \frac{1}{(1-2\beta \sigma^2)^{\frac{d}{2}}} \e^{a|x|^p + c + (\delta_{\beta}
    -1 )x^2} W(x)
  = \varepsilon(x) W(x),
  \]
  with $\varepsilon(x)\to 0$ as $|x|\to + \infty$. Therefore, by considering
  againg $K_n=B(0,n)$, we see that
  \[
  \big(Q^f W\big)(x) = \ind_{K_n^c}(x) \varepsilon(x) W(x) +\ind_{K_n}(x) \varepsilon(x) W(x) 
  \leq \gamma_n W(x) + \ind_{K_n}(x) b_n,
  \]
  where $\gamma_n = \sup_{K_n^c} \varepsilon \to 0$ as $n\to + \infty$,
  and $b_n=\sup_{K_n} \varepsilon \,  W<+\infty$. This shows that
  Assumption~\ref{as:lyapunovgeneral} is satisfied. Assumptions~\ref{as:minogeneral}
  and~\ref{as:regularity} follow by arguments similar to those used in the proof of
  Proposition~\ref{prop:DMC}.
\end{proof}

The latter examples do not intend to form a complete overview of the possible practical
cases. However, they seem characteristic of two typical situations: one where the
confinement comes from the potential $V=-f$, and another one where it
arises from the dynamics. These two strategies correspond respectively to a Diffusion Monte Carlo
context~\cite{hairer2014improved} and a Large Deviations
context~\cite{kontoyiannis2005large}. They are both encoded in the
condition~\eqref{eq:lyapunovgeneral}.

%%%%%%%%%%%%%%%%%%%%%%%%%%%%%%%%%%%%%%%%%%%%%%%%%%%%%%%%%%%%%%%%%

\subsection{Applications to diffusion processes}
\label{sec:contappli}

We now provide some examples where the conditions of Section~\ref{sec:continuous} are met.
Our main concern is the Lyapunov condition, Assumption~\ref{as:contlyapunov}, so we
assume~$f$ and the coefficients of the SDE~\eqref{eq:SDE} to be regular enough for
Assumption~\ref{as:hypoelliptic} to be satisfied. Let us start with a reversible diffusion.

\begin{prop}
  \label{prop:difflyapunov}
  Consider a diffusion process~$(X_t)_{t\geq 0}$ over~$\R^d$ satisfying~\eqref{eq:SDE}
  with $\sigma = \sqrt{2}\,\id$, and assume that the drift
  is given by $b=-\nabla U$, where $U:\X\to \R$ is a smooth potential such that
  $U(x)\to +\infty$ as $|x|\to +\infty$. Assume moreover that~$U$ satisfies
  \begin{equation}
    \label{eq:gradlapl}
    \lim_{|x|\to +\infty} \ \frac{ | \nabla U (x)|^2}{|\Delta U(x)|} = +\infty,
  \end{equation}
  and there exists $1/2 < \beta <1$ such that
  \begin{equation}
    \label{eq:confischro}
  \underset{|x|\to + \infty}{\lim}\  \Big(
  - \beta (1 - \beta) |\nabla U|^2 + \beta\Delta U  + f
  \Big) = - \infty.
  \end{equation}
  Then Assumption~\ref{as:contlyapunov} holds for the Lyapunov function~$W(x) = \e^{\beta U (x)}$. 
\end{prop}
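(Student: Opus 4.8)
The plan is to verify the two parts of Assumption~\ref{as:contlyapunov} by a direct chain-rule computation, the guiding observation being that, for $W=\e^{\beta U}$, the quantity $W^{-1}(\Lc+f)W$ is \emph{exactly} the function appearing in~\eqref{eq:confischro}. Since $\sigma=\sqrt 2\,\id$ and $b=-\nabla U$, the generator~\eqref{eq:gen} is $\Lc=\Delta-\nabla U\cdot\nabla$. I would compute $\nabla W=\beta\,(\nabla U)\,W$ and $\Delta W=\big(\beta\,\Delta U+\beta^2|\nabla U|^2\big)W$, so that
\[
  W^{-1}(\Lc+f)W=-\beta|\nabla U|^2+\beta\,\Delta U+\beta^2|\nabla U|^2+f=-\beta(1-\beta)|\nabla U|^2+\beta\,\Delta U+f.
\]
By~\eqref{eq:confischro} this tends to $-\infty$ as $|x|\to+\infty$, which is precisely~\eqref{eq:contlyapunov}. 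Here $W$ is $C^2$ because $U$ is smooth, and $W(x)\to+\infty$ since $\beta>0$ and $U(x)\to+\infty$; adding a constant to $U$ changes neither $\Lc$ nor the hypotheses~\eqref{eq:gradlapl}--\eqref{eq:confischro} and only rescales $W$ by a positive constant (which is immaterial for the Lyapunov conditions), so we may assume $U\geq0$, hence $W\geq1$ (alternatively one invokes Remark~\ref{rem:as}).

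Next I would produce the auxiliary function $\mathscr{W}$ of~\eqref{eq:majoLW}. The natural choice is $\mathscr{W}=\e^{\beta' U}$ with $\tfrac12<\beta'<\beta$: it is $C^2$, satisfies $\mathscr{W}\geq1$, and $\varepsilon=\mathscr{W}/W=\e^{(\beta'-\beta)U}\to0$ at infinity because $\beta'-\beta<0$ and $U\to+\infty$. The same computation as above gives $\mathscr{W}^{-1}(\Lc+f)\mathscr{W}=-\beta'(1-\beta')|\nabla U|^2+\beta'\Delta U+f$. To bound this from above I would first use~\eqref{eq:confischro} in the form $f\leq\beta(1-\beta)|\nabla U|^2-\beta\Delta U-1$, valid outside some compact set, obtaining
\[
  \mathscr{W}^{-1}(\Lc+f)\mathscr{W}\leq\big(\beta(1-\beta)-\beta'(1-\beta')\big)|\nabla U|^2+(\beta'-\beta)\Delta U-1.
\]
Since $\tfrac12<\beta'<\beta<1$ and $t\mapsto t(1-t)$ is decreasing on $(\tfrac12,1)$, the coefficient $\beta(1-\beta)-\beta'(1-\beta')=:-\delta$ is strictly negative. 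Then, invoking~\eqref{eq:gradlapl} to get $|\Delta U|\leq\epsilon_0|\nabla U|^2$ outside a compact set for a fixed $\epsilon_0$ small enough that $(\beta-\beta')\epsilon_0\leq\delta$, the $\Delta U$-term is absorbed and $\mathscr{W}^{-1}(\Lc+f)\mathscr{W}\leq-1$ outside a compact set; on that compact set the expression is bounded by continuity, so $\mathscr{W}^{-1}(\Lc+f)\mathscr{W}\leq c$ for some $c\geq0$. This is~\eqref{eq:majoLW}, completing the verification.

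The one genuinely delicate point is this last bound. Replacing $W=\e^{\beta U}$ by $\mathscr{W}=\e^{\beta'U}$ changes the coefficient of $|\nabla U|^2$ from $\beta(1-\beta)$ to $\beta'(1-\beta')$, while the only available control on the possibly unbounded weight $f$ is~\eqref{eq:confischro}, written with the coefficient $\beta$; one therefore has to pay for the mismatch through the $\Delta U$-term, which is exactly what makes the hypotheses $1/2<\beta$ (so that $t(1-t)$ decreases, keeping the $|\nabla U|^2$-coefficient negative) and~\eqref{eq:gradlapl} ($\Delta U$ negligible before $|\nabla U|^2$) the natural assumptions. Everything else is a routine computation.
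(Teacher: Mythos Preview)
Your proof is correct and follows essentially the same approach as the paper: both compute $W^{-1}(\Lc+f)W$ directly to verify~\eqref{eq:contlyapunov}, then take $\mathscr{W}=\e^{\theta U}$ with $\tfrac12<\theta<\beta$ and exploit the monotonicity of $t\mapsto t(1-t)$ on $(\tfrac12,1)$ together with~\eqref{eq:gradlapl} to obtain~\eqref{eq:majoLW}. Your discussion of how~\eqref{eq:gradlapl} is used to absorb the $\Delta U$-term is in fact more explicit than the paper's, which bundles this step into a single sentence.
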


The conditions~\eqref{eq:gradlapl} and~\eqref{eq:confischro} are satisfied for instance for
potentials~$x\mapsto U(x)$ behaving at infinity as~$|x|^q$ with $q > 1$, and weight
functions~$f$ such that $f(x)/|x|^{2(q-1)} \to 0$ as $|x|\to+\infty$. 

\begin{proof}
  The proof follows by simple computations. Indeed, it holds
  \[
  \Lc W  = -\beta \nabla U \cdot (\nabla U) W + \beta\nabla\cdot[ ( \nabla U) W ] 
  = -\beta | \nabla U|^2 W + \beta W \Delta U + \beta^2 |\nabla U |^2 W,
  \]
  so that
  \begin{equation}
    \label{eq:LfWint}
  (\Lc + f) W = \Big( -\beta (1 - \beta ) |\nabla U|^2 +\beta \Delta U + f
  \Big)W, 
  \end{equation}
  hence~\eqref{eq:contlyapunov} in Assumption~\ref{as:contlyapunov} is satisfied.
  The conditions in~\eqref{eq:majoLW} are obtained setting
  \[
  \mathscr{W}(x)= \e^{\theta U(x)},
  \]
  for some~$\theta\in(1/2,\beta)$. It is clear that~$\mathscr{W}/W$ goes to zero at
  infinity, so the first condition in~\eqref{eq:majoLW} holds true. 
  The key remark is then to note that for our choice of~$\theta,\beta$, we have
  \[
   \beta ( 1 - \beta) \leq \theta (1 - \theta) .
  \]
  Therefore,~\eqref{eq:gradlapl} and~\eqref{eq:confischro} show that there
  exist~$c,c'\geq 0$ such that
  \[
  f \leq 
   \beta (1 - \beta) |\nabla U|^2 - \beta\Delta U 
   +c \leq 
  \theta (1 - \theta) |\nabla U|^2 - \theta\Delta U  
   + c' = - 
  \frac{\Lc \mathscr{W}}{\mathscr{W}} + c'.
  \]
  This proves that the second condition in~\eqref{eq:majoLW} holds, which concludes the proof.
\end{proof}

Let us mention that the
conditions in Proposition~\ref{prop:difflyapunov} are similar to conditions appearing in works
on Poincaré inequalities (see~\cite{bakry2008simple} and references therein), and correspond to the
case where the confinement comes from the potential~$U$,~$f$ being a perturbation that
should not go too fast to~$+\infty$ with respect to~$U$.

\begin{remark}
  Proposition~\ref{prop:difflyapunov} is also related to confinement
  conditions for Schrödinger operators. Indeed, using the parameters of
  Proposition~\ref{prop:difflyapunov}, the dynamics is reversible with respect to
  the measure~$\e^{-U}$ and, as noted in Section~\ref{sec:continuous}, it is
  possible to turn the diffusion operator~$\Lc$ into a Schrödinger operator using the
  unitary transform:
  \[
  \Lc \to \e^{-\frac{U}{2}} \Lc \e^{\frac{U}{2}}.
  \]
  Using this transformation, $\Lc + f $ is unitarily equivalent~\cite{lelievre2016partial} to
  the following Schrödinger operator: 
  \[
  \Delta - \frac{1}{4}|\nabla U|^2 +\frac{1}{2} \Delta U + f.
  \]
  We then notice that the confinement condition for this Schrodinger operator is
  precisely~\eqref{eq:confischro} for the limit value $\beta = 1/2$. This shows
  that our Lyapunov condition~\eqref{eq:contlyapunov} is a natural extension of this condition for
  non-reversible dynamics. As a side product, it shows that a slightly modified confinement
  condition for a Schrödinger operator does not only provide convergence in $L^2$-norm, but also
  in a weighted uniform norm, which does not seem to be a standard result. Let us however conclude this remark by pointing out
  that we do not claim here that the conditions~\eqref{eq:gradlapl}-\eqref{eq:confischro} are optimal in any way,
  but they are for sure reminiscent of (admittedly restrictive yet typical) sufficient conditions for Poincar\'e inequalities to hold.
\end{remark}

In the non-reversible setting one cannot hope for a Schrödinger representation,
and the Lyapunov function framework shows its usefulness.
Let us present such an application, drawn from~\cite{eberle2016quantitative},
where the drift behaves polynomialy at infinity.

\begin{prop}
  \label{prop:practicalcontinuous}
  Let~$(X_t)_{t\geq 0}$ satisfy the SDE~\eqref{eq:SDE} with $\sigma =\sqrt{2}\,\id$ and
  where the drift $b$ is such that
  there exist $q > 1$, $\delta >0$, $R>0$ for which
  \begin{equation}
    \label{eq:driftq}
\forall\, |x|\geq R,\quad  b(x) \cdot x \leq - \delta |x|^q.
  \end{equation}
  Assume also that~$f$ is smooth and satisfies $f(x) \leq  a | x |^p$
  for $|x|\geq R$ and some $p < 2q - 2$. Then, Assumption~\ref{as:contlyapunov} holds for the
  Lyapunov function
  \begin{equation}
    \label{eq:deltabeta}
  W(x)=\e^{\beta |x|^q}, \quad with\quad 0 < \beta  < \frac{\delta}{q} .
  \end{equation}
\end{prop}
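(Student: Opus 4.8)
The plan is to verify directly the two parts of Assumption~\ref{as:contlyapunov} for the candidate $W(x) = \e^{\beta|x|^q}$. A minor caveat is that $|x|^q$ is only $C^1$, not $C^2$, at the origin when $q \leq 2$; I would circumvent this by taking $W$ to be a smooth modification that agrees with $\e^{\beta|x|^q}$ for $|x| \geq R$ and is $\geq 1$ everywhere (equivalently, one may replace $|x|^q$ by the smooth function $(1+|x|^2)^{q/2}$ throughout). Since both conditions in Assumption~\ref{as:contlyapunov} constrain only the behavior as $|x| \to +\infty$, together with continuity on compact sets, such a modification is harmless.

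First I would compute $\Lc W$, using that $\sigma\sigma^T = 2\,\id$, so $\Lc = b\cdot\nabla + \Delta$. From $\nabla|x|^q = q|x|^{q-2}x$ and $\Delta|x|^q = q(d+q-2)|x|^{q-2}$, a short calculation gives, for $|x| \geq R$,
\[
W^{-1}(\Lc + f)W = \beta q\,|x|^{q-2}\, b(x)\cdot x + \beta q(d+q-2)|x|^{q-2} + \beta^2 q^2 |x|^{2q-2} + f(x).
\]
Inserting the drift bound $b(x)\cdot x \leq -\delta|x|^q$ (valid for $|x| \geq R$), which controls the first term by $-\beta q\delta|x|^{2q-2}$, together with the growth bound $f(x) \leq a|x|^p$, I obtain
\[
W^{-1}(\Lc + f)W \leq \beta q(\beta q - \delta)\,|x|^{2q-2} + \beta q(d+q-2)|x|^{q-2} + a|x|^p, \qquad |x| \geq R.
\]
Because $\beta < \delta/q$, the coefficient $\beta q(\beta q - \delta)$ is strictly negative; and since $q > 1$, the exponent $2q-2$ is positive and strictly larger than both $q-2$ and $p$ (the latter by the hypothesis $p < 2q - 2$). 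Hence the right-hand side tends to $-\infty$ as $|x| \to +\infty$, which is exactly~\eqref{eq:contlyapunov}.

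For the second part, I would take $\mathscr{W}(x) = \e^{\beta'|x|^q}$ for an arbitrary $\beta' \in (0,\beta)$, smoothed near the origin as above. Then $\varepsilon(x) = \mathscr{W}(x)/W(x) = \e^{(\beta' - \beta)|x|^q} \to 0$ as $|x| \to +\infty$, so the first requirement in~\eqref{eq:majoLW} holds. The identical computation with $\beta'$ in place of $\beta$ shows that, for $|x| \geq R$, the quantity $\mathscr{W}^{-1}(\Lc + f)\mathscr{W}$ is bounded above by $\beta' q(\beta' q - \delta)|x|^{2q-2}$ plus lower-order terms; since $\beta' q < \delta$ this again tends to $-\infty$, so $\mathscr{W}^{-1}(\Lc + f)\mathscr{W}$ is bounded above outside a ball, and being continuous it is bounded above on that ball as well, giving a constant $c \geq 0$ with $\mathscr{W}^{-1}(\Lc + f)\mathscr{W} \leq c$ on all of $\X$. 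This establishes the second requirement in~\eqref{eq:majoLW} and completes the verification of Assumption~\ref{as:contlyapunov}. The only real subtlety is the lack of $C^2$-regularity of $|x|^q$ at the origin, handled by the smoothing remark; everything else reduces to the elementary computation of $\Lc$ on a radial exponential and the bookkeeping that the exponent $2q - 2$ dominates.
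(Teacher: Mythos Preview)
Your proposal is correct and follows essentially the same route as the paper: the direct computation of $W^{-1}(\Lc+f)W$ for $W=\e^{\beta|x|^q}$, the comparison of the exponents $2q-2>p$ and $2q-2>q-2$, and the choice $\mathscr{W}=\e^{\theta|x|^q}$ with $\theta\in(0,\beta)$ for the second condition. The one point you add that the paper glosses over is the lack of $C^2$-regularity of $|x|^q$ at the origin for $q\leq 2$; your smoothing remark (or the replacement by $(1+|x|^2)^{q/2}$) is the natural fix.
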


\begin{proof}
  Setting $W(x) = \e^{\beta |x|^q}$, a simple computation shows that
  \begin{equation}
    \label{eq:calculusW}
  \begin{aligned}
  \Lc W (x) & = \beta q b(x)\cdot x |x|^{q-2} W(x) + \beta q \nabla\cdot (x|x|^{q-2}W(x))
  \\ & = \beta q b(x)\cdot x |x|^{q-2} W(x) + \beta q d|x|^{q-2} W(x) + \beta q (q-2)
  |x|^{q-2}W(x) + \beta^2 q^2 |x|^{2q-2} W(x),
  \end{aligned}
  \end{equation}
  so
  \[
  \frac{\Lc W}{W}(x) = \beta q b(x)\cdot x |x|^{q-2} + \beta q (q + d -2)
  |x|^{q-2} + \beta^2 q^2 |x|^{2q-2}.
  \]
  Using~\eqref{eq:driftq} and the bound on~$f$ leads to, for~$|x|\geq R$, 
  \begin{equation}
    \label{eq:boundLWf}
    \frac{\Lc W}{W}(x) + f(x) \leq  - \beta q (\delta - \beta q)  |x|^{2q-2}
    + \beta q (q + d - 2) |x|^{q-2} + a|x|^p. 
  \end{equation}
  Since~$p<2q-2$,~\eqref{eq:contlyapunov} is readily satisfied
  when $0< \beta < \delta /q$.

   We end the proof by showing that~\eqref{eq:majoLW} holds. Similarly to
   the proof of Proposition~\ref{prop:difflyapunov}, we consider
   \[
   \mathscr{W}(x)=\e^{\theta |x|^q},  \quad \mathrm{with} \quad 0 <\theta < \beta,
   \]
   which satisfies the first condition in~\eqref{eq:majoLW}. Repeating the calculations
   leading to~\eqref{eq:boundLWf}, since~$\theta< \delta/q$ and
   $p< 2q - 2$, we obtain the existence of a constant~$c\geq 0$ such
   that
   \[
   \frac{\Lc \mathscr{W}}{\mathscr{W}}(x) + f(x) \leq - \theta q (\delta - \theta q)  |x|^{2q-2}
    + \theta q (q-1) |x|^{q-2} + a|x|^p \leq c,
   \]
   so the second condition in~\eqref{eq:majoLW} holds true,
   and Assumption~\ref{as:contlyapunov} is satisfied.
\end{proof}

%%%%%%%%%%%%%%%%%%%%%%%%%%%%%%%%%%%%%%%%%%%%%%%%%%%%%%%%%%%%%%%%%%%%%%%%%%%%
\subsection{Convergence results uniform with respect to the time step}
\label{sec:discretization}
When one considers continuous semigroups as in Section~\ref{sec:continuous}, it is natural
in practical applications to discretize~\eqref{eq:continuous} for example with
\begin{equation}
  \label{eq:simplediscr}
\Phi_k(\mu)(\varphi)=\frac{\E_{\mu} \left[ \varphi(x_k)\, \e^{\Dt \sum_{i=0}^{k-1} f(x_i)} \right]}
{\E_{\mu}\left[ \e^{\Dt \sum_{i=0}^{k-1} f(x_i)} \right]},
\end{equation}
where~$(x_k)_{k\in\N}$ is a discretization of the SDE~\eqref{eq:SDE} with time step $\Dt>0$,
\textit{i.e.}~$x_k$ is an approximation of~$X_{k\Dt}$.
First, as mentioned in~\cite{ferre2017error}, the stability of the discretization
schemes for unbounded state spaces was an open question. Our framework covers this situation,
as shown by the examples provided in Section~\ref{sec:discrappli}.

Another interesting consequence of our analysis is that we are able to obtain convergence
estimates uniform in the time step~$\Dt$, in the sense that the rate of decay towards the
invariant measure in fact depends on~$k\Dt$, the physical time of the system, with a prefactor
independent of~$\Dt$. It has been the purpose of several works to develop
such uniform in~$\Dt$ estimates for long time convergence, in
particular in the context of Metropolized discretizations of overdamped
Langevin dynamics~\cite{bou2012nonasymptotic,fathi2015improving},
discretization of the Langevin dynamics~\cite{lelievre2016partial,leimkuhler2016computation},
and other discretizations of SDEs~\cite{debussche2012weak,kopec2014weak,kopec2015weak}.
Our goal is to show that similar results can be obtained for Feynman--Kac semigroups. For
the remainder of this section,  we assume that
\[
\X=\mathbb{T}^d
\]
is the $d$-dimensional torus, the function~$\sigma$ in~\eqref{eq:SDE} is the identity matrix,
and we denote by~$\ceil{a}$ the upper integer part of~$a$ for $a\in\R$. Considering an unbounded
state space~$\X$ is also possible but, as noted in~\cite{ferre2017error}, this leads to serious
technical difficulties -- we therefore postpone this case to future works.

We consider here a simplified version of the framework extensively developed
in~\cite{ferre2017error}. We say that a kernel operator~$\Qdt^f$ defines a weakly consistent
discretization of the semigroup~\eqref{eq:continuous} if it satisfies
Assumption~\ref{as:regularity} and  there exist $\Dt^* >0$, $C>0$, $p\in\N$,
and an operator $\mathcal{R}_{\Dt}:C^{\infty}(\X)\to C^{\infty}(\X)$ (which encodes remainder terms)
such that, for any $\varphi\in C^{\infty}(\X)$,
\begin{equation}
  \label{eq:Qdtexpand1}
\Qdt^f \varphi = \varphi + \Dt (\Lc + f)\varphi + \Dt^2 \mathcal{R}_{\Dt}\varphi,
\end{equation}
where, for all $\Dt\in(0,\Dt^*]$,
\[
\| \mathcal{R}_{\Dt} \varphi \|_{\Linfty} \leq C\hspace{-0.15cm} \underset{\begin{array}{c}
    {\scriptstyle m \in \N^d}
\\{\scriptstyle | m |\leq p} \end{array} }{\sup} \| \partial^{m} \varphi \|_{\Linfty}, 
\]
 using the notation $\partial^m= \partial_{x_1}^{m_1}\hdots\partial_{x_d}^{m_d}$
for $ m = (m_1,\hdots,m_d)\in\N^d$.
The dynamics~\eqref{eq:continuous} is then approximated by the discrete semigroup
\begin{equation}
  \label{eq:discretization}
  \forall\, k \geq 1, \quad \forall\, \mu\in\PX,\quad \forall\, \varphi\in \Linfty(\X), \quad
  \Phi_k(\mu)(\varphi)= \frac{\mu\left( (\Qdt^f)^k \varphi\right)}{\mu\left( (\Qdt^f)^k
    \ind \right)}.
\end{equation}  

The latter definition encompasses many numerical schemes -- we refer the interested reader
to~\cite{ferre2017error} for a justification of this framework and the subsequent numerical
analysis. Although the framework is rather abstract, one can think for concreteness to the Euler--Maruyama scheme
for discretizing~\eqref{eq:SDE}, in which case the evolution operation~$\Qdt$ is defined as
\[
\forall\,x\in\X, \quad (\Qdt\varphi)(x) = \E[ \varphi(x_{n+1})\,|\, x_n=x],
\]
for the numerical scheme
\[
x_{n+1} = x_n + b(x_n)\Dt + \sqrt{\Dt}G_n,
\]
where~$(G_n)_{n\geq 1}$ is a familly of independent standard Gaussian variables.
It is then possible to check that, for instance, the left point integration
$\Qdt^f = \e^{\Dt f}\Qdt$ satisfies the expansion~\eqref{eq:Qdtexpand1}. The specific numerical scheme to be considered
is however not crucial at all, as long as it is a weak approximation of the underlying SDE (as made precise in~\eqref{eq:Qdtexpand1}) with a regularizing evolution operator~$\Qdt$.

In order to obtain uniform in the time step estimates,
we now assume a uniform minorization and boundedness condition of the following form.

\begin{assumption}
  \label{as:unif}
  Fix a time $T>0$. There exist
  $\Dt^* >0$, $\eta\in\PX$ and $\alpha\in(0,1)$ such that,
  for any $\Dt \in(0, \Dt^*]$, the operator~$\Qdt^f$ is strong Feller and for
  any $\varphi\in\Linfty(\X)$ with $\varphi \geq 0$, it holds
      \begin{equation}
        \label{eq:minounif}
  \forall\,x\in\X, \quad  \alpha \eta(\varphi)  \leq
  \left(\left(\Qdt^f\right)^{\ceil{\frac{T}{\Dt}}}\varphi\right) (x)
  \leq\frac{1}{\alpha} \eta(\varphi).
      \end{equation}
\end{assumption}

The lower bound in~\eqref{eq:minounif} corresponds to a minorization condition
with respect to a physical time~$T>0$, see~\cite[Section~3]{lelievre2016partial}.
The upper bound is a standard ingredient for
studying Feynman--Kac semigroups, see for instance~\cite{del2001stability,del2004feynman}.
We will see in Proposition~\ref{prop:unif} that Assumption~\ref{as:unif} is naturally satisfied
if a similar condition holds for~$\Qdt$ and the evolution operator reads
$\Qdt^f = \e^{\Dt f}\Qdt$ (which corresponds to
the discretization~\eqref{eq:simplediscr}).

\begin{remark}
  Although Assumption~\ref{as:unif} holds in many situations when~$\X$ is compact, the requirement
  that the upper bound in~\eqref{eq:minounif} holds may not seem natural in view of
  the results of Section~\ref{sec:discrete}. Indeed, our framework
  shows that this upper bound is not necessary to prove the ergodicity of Feynman--Kac semigroups,
  as opposed to previous
  works~\cite{del2001stability,del2002stability,del2004feynman,ferre2017error}. A careful look
  at the proof of Theorem~\ref{theo:unif} shows that this upper bound is only used to show
  the uniform boundedness of the approximate eigenvector~$h_{\Dt}$ in~\eqref{eq:hdt}. However,
  controlling~$h_{\Dt}$ as $\Dt\to 0$ does not seem to be an easy task without the upper bound
  in~\eqref{eq:minounif}. We therefore stick  to this assumption here.
\end{remark}

Before stating our uniform in~$\Dt$ convergence result, we need the following estimate
deduced from~\cite[Lemma~5]{ferre2017error}, whose proof can be found in
Appendix~\ref{sec:estimateh}.

\begin{lemma}
  \label{lem:estimateh}
  Consider the process~$(X_t)_{t\geq 0}$ solution to~\eqref{eq:SDE} with
  $\sigma = \id$, $b\in C^{\infty}(\X)$, and a function $f\in C^{\infty}(\X)$.
  Then the operator $\Lc + f$ admits a real isolated largest (in modulus) eigenvalue~$\lambda$ 
  with eigenvector $h\in C^{\infty}(\X)$ and associated eigenspace of dimension one, which satisfies
  \[
  (\Lc + f) h = \lambda h, \quad \mbox{and}\quad P_t^f h = \e^{t\lambda} h,\quad \forall\, t \geq 0.
  \]
  If~$\Qdt^f$ corresponds to a weakly consistent discretization of~\eqref{eq:continuous} (\textit{i.e.}~\eqref{eq:Qdtexpand1}
  holds) satisfying Assumption~\ref{as:unif}, then for any $\Dt >0$, the operator~$\Qdt^f$ has a
  largest (in modulus) eigenvalue $\Lambda_{\Dt}\in \R$, which is non-degenerate. Denoting the associated
  eigenvector by~$h_{\Dt}$, 
  \[
  \Qdt^f h_{\Dt} = \Lambda_{\Dt} h_{\Dt},
  \]
  with the normalization $\eta(h_{\Dt})=1$, there exist $\Dt^* >0$, $C>0$,
  $\varepsilon>0$  such that for all $\Dt\in(0,\Dt^*]$, there is $c_{\Dt}\in\R$ for which
  \begin{equation}
    \label{eq:lambdadt}
    \Lambda_{\Dt} = \e^{\Dt \lambda + \Dt^2 c_{\Dt}},
  \end{equation}
  with $| c_{\Dt} | \leq C$ and
  \begin{equation}
    \label{eq:hdt}
    \forall\, x\in\X,\quad \forall\, \Dt\in(0,\Dt^*],\quad
    \varepsilon\leq   h_{\Dt} (x) \leq \varepsilon^{-1}.
  \end{equation}
\end{lemma}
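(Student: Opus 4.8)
The plan is to transfer the abstract spectral results of Section~\ref{sec:discrete} to the two kernels $P_{t_0}^f$ and $\Qdt^f$ on the torus, and then to compare the resulting principal eigenpairs quantitatively, using consistency and the two-sided bound~\eqref{eq:minounif}. I would split the argument into three steps.

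\textbf{Step 1: the continuous operator.} I would fix $t_0>0$ and apply Lemma~\ref{lem:specQV} to $Q^f:=P_{t_0}^f$ on $\X=\mathbb{T}^d$. The torus being compact, Assumption~\ref{as:lyapunovgeneral} holds trivially with $W\equiv\ind$, $K_n=\mathbb{T}^d$, $\gamma_n=1/n$ and $b_n=\e^{t_0\|f\|_{\Linfty}}$; under the present hypotheses ($\sigma=\id$, $b,f\in C^\infty$) the operator $P_{t_0}^f$ has a jointly continuous, everywhere positive density by standard parabolic regularity, which yields Assumptions~\ref{as:minogeneral} and~\ref{as:regularity} exactly as in Step~1 of the proof of Theorem~\ref{theo:continuous}, with $\eta_n$ proportional to the Lebesgue measure. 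Lemma~\ref{lem:specQV} and the discussion following Lemma~\ref{lem:Qh} then give a simple, isolated, largest-in-modulus eigenvalue of $P_{t_0}^f$, which I write $\e^{t_0\lambda}$, with a positive eigenfunction $h$ (bounded, hence continuous by strong Fellerness, hence $C^\infty$ by elliptic bootstrapping). The semigroup property forces $h$ to be independent of $t_0$ and $P_t^f h=\e^{t\lambda}h$ for all $t\ge0$, whence $(\Lc+f)h=\lambda h$ by differentiation at $t=0$; isolation of $\lambda$ in $\sigma(\Lc+f)$ follows from that of $\e^{t_0\lambda}$ and the compactness of the resolvent of $\Lc+f$ on the torus.

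\textbf{Step 2: the discrete operator.} With $N_{\Dt}=\ceil{T/\Dt}$, I would apply Lemma~\ref{lem:specQV} to $(\Qdt^f)^{N_{\Dt}}$, again on $\mathbb{T}^d$ with $W\equiv\ind$: the bound~\eqref{eq:minounif} gives Assumption~\ref{as:lyapunovgeneral} (since $(\Qdt^f)^{N_{\Dt}}\ind\le\alpha^{-1}\ind$), its lower half gives~\eqref{eq:minogeneral} with $\eta_n=\eta$, $\alpha_n=\alpha$, and its upper half gives~\eqref{eq:irreducibility}; strong Fellerness of $\Qdt^f$ (from consistency and Assumption~\ref{as:unif}) passes to the composition, so Assumption~\ref{as:regularity} holds. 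Thus $(\Qdt^f)^{N_{\Dt}}$ has a simple, isolated, positive dominant eigenvalue $\mu_{\Dt}$ with positive eigenfunction. Since $\Qdt^f$ commutes with $(\Qdt^f)^{N_{\Dt}}$ it preserves the one-dimensional dominant eigenspace, so $\Qdt^f h_{\Dt}=\Lambda_{\Dt}h_{\Dt}$ with $h_{\Dt}>0$, $\Lambda_{\Dt}>0$ real and $\Lambda_{\Dt}^{N_{\Dt}}=\mu_{\Dt}$; I normalise by $\eta(h_{\Dt})=1$, which is legitimate as $h_{\Dt}>0$. That $\Lambda_{\Dt}$ is non-degenerate and largest in modulus for $\Qdt^f$ follows from the corresponding properties of $\mu_{\Dt}$ together with $\theta((\Qdt^f)^{N_{\Dt}})=0$, which forces $\theta(\Qdt^f)=0$ and hence that the spectral radius of $\Qdt^f$ is an eigenvalue.

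\textbf{Step 3: comparison and the uniform estimates.} I would insert the smooth $h$ of Step~1 into the consistency relation, $\Qdt^f h=(1+\Dt\lambda)h+\Dt^2 e_{\Dt}$ with $\|e_{\Dt}\|_{\Linfty}\le C\sup_{|m|\le p}\|\partial^m h\|_{\Linfty}=:C_h$ for $\Dt\in(0,\Dt^*]$. Since $h_{\min}:=\min_{\mathbb{T}^d}h>0$, this reads $(1+\Dt\lambda-\Dt^2 C_h')h\le\Qdt^f h\le(1+\Dt\lambda+\Dt^2 C_h')h$ with $C_h'=C_h/h_{\min}$; iterating (using positivity of $\Qdt^f$), dividing by $h$, using $h_{\min}\le h\le h_{\max}:=\max_{\mathbb{T}^d}h$, and applying Gelfand's formula~\eqref{eq:gelfand} to the positive operator $\Qdt^f$ on $\Linfty$ (whose norm is $\|\Qdt^f\ind\|_{\Linfty}$) gives $|\Lambda_{\Dt}-(1+\Dt\lambda)|\le\Dt^2 C_h'$ for $\Dt$ small. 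Writing $\Lambda_{\Dt}=\e^{\Dt\lambda+\Dt^2 c_{\Dt}}$ and expanding the logarithm then yields~\eqref{eq:lambdadt} with $|c_{\Dt}|\le C$ uniformly on $(0,\Dt^*]$. Finally, applying $(\Qdt^f)^{N_{\Dt}}$ to $h_{\Dt}$ gives $\Lambda_{\Dt}^{N_{\Dt}}h_{\Dt}=(\Qdt^f)^{N_{\Dt}}h_{\Dt}$, so~\eqref{eq:minounif} with $\varphi=h_{\Dt}$ and $\eta(h_{\Dt})=1$ gives $\alpha\Lambda_{\Dt}^{-N_{\Dt}}\le h_{\Dt}(x)\le\alpha^{-1}\Lambda_{\Dt}^{-N_{\Dt}}$; since $N_{\Dt}\Dt\in[T,T+\Dt^*]$ and $N_{\Dt}\Dt^2\le\Dt^*(T+\Dt^*)$, \eqref{eq:lambdadt} bounds $\Lambda_{\Dt}^{N_{\Dt}}$ between two positive constants independent of $\Dt$, giving~\eqref{eq:hdt}. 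The hard part will be making Step~3 uniform in $\Dt$: consistency controls $\mathcal{R}_{\Dt}h$ only through finitely many derivatives of $h$, so one genuinely needs the abstractly-produced $h$ to be $C^\infty$, and it is the \emph{two-sided} bound~\eqref{eq:minounif}, not just a one-sided minorization, that prevents $h_{\Dt}$ and $\Lambda_{\Dt}^{N_{\Dt}}$ from degenerating as $\Dt\to0$.
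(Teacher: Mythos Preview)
Your argument is correct, and in fact more self-contained than the paper's. The paper's proof is essentially a sequence of citations: it invokes \cite[Proposition~1]{ferre2017error} for the spectral picture of~$\Lc+f$ and the smoothness of~$h$, applies Lemma~\ref{lem:specQV} directly to~$\Qdt^f$ for the discrete eigenpair, and appeals to \cite[Theorem~3]{ferre2017error} for the estimate~\eqref{eq:lambdadt}; only the final bound~\eqref{eq:hdt} is derived in the paper, and there your argument coincides with it almost verbatim. By contrast, you recover the continuous eigenpair from Lemma~\ref{lem:specQV} plus parabolic regularity, you apply Lemma~\ref{lem:specQV} to~$(\Qdt^f)^{\ceil{T/\Dt}}$ rather than to~$\Qdt^f$ itself (which is arguably more honest, since Assumption~\ref{as:unif} only gives minorization for the iterate), and you obtain~\eqref{eq:lambdadt} by sandwiching $\Qdt^f h$ between multiples of~$h$ via consistency and reading off the spectral radius from Gelfand's formula. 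Your route avoids all external dependence on~\cite{ferre2017error} at the cost of a short elliptic-regularity argument and an explicit Gelfand computation; the paper's route is shorter but opaque unless one has that reference at hand.
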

Lemma~\ref{lem:estimateh} means that the evolution operator associated with a weakly consistent
discretization has a principal eigenvalue approximating the principal eigenvalue
of the continuous dynamics, and that its associated principal eigenvector remains uniformly
bounded from below and above if~$\Dt$ is sufficiently small.
 Let us now
state the uniform in~$\Dt$ version of Theorem~\ref{theo:general}.

\begin{theorem}
\label{theo:unif}
Consider a consistent discretization~$\Qdt^f$ of the dynamics~\eqref{eq:continuous}
satisfying Assumption~\ref{as:unif}. Then, there exists~$\Dt^*>0$ such that,
for any $\Dt \in(0,\Dt^*]$, the dynamics~\eqref{eq:discretization} admits a unique invariant measure
$\mu_{f,\Dt}^\star\in\PX$. Moreover, there exist $\kappa >0$, $C>0$ such that for any
$\varphi \in \Linfty(\X)$, $\mu\in\PX$, and $\Dt\in(0,\Dt^*]$, it holds
  \[
  \forall\, k \geq 0, \quad
  \left| \Phi_k(\mu)(\varphi) - \mu_{f,\Dt}^\star(\varphi) \right| \leq
  C\, \e^{-\kappa k\Dt } \| \varphi\|_{\Linfty}.
\]
\end{theorem}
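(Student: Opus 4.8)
The strategy is to mimic the proof of Theorem~\ref{theo:continuous}: conjugate the discrete dynamics by the principal eigenvector $h_{\Dt}$ provided by Lemma~\ref{lem:estimateh}, reducing~\eqref{eq:discretization} to the study of a Markov operator, and then apply a Harris-type ergodicity argument --- but now tracking all constants uniformly in $\Dt$. First I would fix $\Dt^*>0$ as in Lemma~\ref{lem:estimateh} and, for $\Dt\in(0,\Dt^*]$, introduce the $h_{\Dt}$-transform $(Q_{\Dt})_h \phi = \Lambda_{\Dt}^{-1} h_{\Dt}^{-1}\, \Qdt^f(h_{\Dt}\phi)$, which is a Markov operator on $\X=\mathbb{T}^d$. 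Since $\X$ is compact, no Lyapunov function is needed: the space $\Linfty_W(\X)$ collapses to $\Linfty(\X)$ and the entire ergodic input reduces to a Doeblin/minorization condition. The reformulation of Theorem~\ref{theo:general}'s proof gives
\[
\Phi_k(\mu)(\varphi) = \frac{\mu\big(h_{\Dt}\,(Q_{\Dt})_h^k (h_{\Dt}^{-1}\varphi)\big)}{\mu\big(h_{\Dt}\,(Q_{\Dt})_h^k h_{\Dt}^{-1}\big)},
\]
so everything hinges on the geometric convergence of $(Q_{\Dt})_h^k$ to its unique invariant measure $\mu_{h,\Dt}$, with a rate and prefactor uniform in $\Dt$ when measured against physical time $k\Dt$.

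**The uniform minorization.** The key step is to show that $(Q_{\Dt})_h$ satisfies a Doeblin condition at the physical-time scale: there exist $\bar\alpha\in(0,1)$ and a probability measure $\zeta$, both independent of $\Dt$, such that $\big((Q_{\Dt})_h\big)^{\ceil{T/\Dt}}(x,\cdot)\ge \bar\alpha\,\zeta(\cdot)$ for all $x$. This follows by unwinding the $h$-transform in the lower bound of~\eqref{eq:minounif}: for $\varphi\ge 0$,
\[
\big((Q_{\Dt})_h\big)^{\ceil{T/\Dt}}\varphi = \Lambda_{\Dt}^{-\ceil{T/\Dt}}\, h_{\Dt}^{-1}\, (\Qdt^f)^{\ceil{T/\Dt}}(h_{\Dt}\varphi) \ge \Lambda_{\Dt}^{-\ceil{T/\Dt}}\, h_{\Dt}^{-1}\,\alpha\,\eta(h_{\Dt}\varphi).
\]
Now $\Lambda_{\Dt}^{\ceil{T/\Dt}} = \exp(\ceil{T/\Dt}(\Dt\lambda + \Dt^2 c_{\Dt}))$ is bounded above and below by constants depending only on $T,\lambda,C,\Dt^*$ by~\eqref{eq:lambdadt}, and $h_{\Dt}$ is squeezed between $\varepsilon$ and $\varepsilon^{-1}$ by~\eqref{eq:hdt}; hence $h_{\Dt}^{-1}\eta(h_{\Dt}\varphi)\ge \varepsilon^2\,\eta(\varphi)$. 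Combining, $\big((Q_{\Dt})_h\big)^{\ceil{T/\Dt}}(x,\cdot)\ge \bar\alpha\,\eta(\cdot)$ with $\bar\alpha = \alpha\varepsilon^2 \inf_{\Dt}\Lambda_{\Dt}^{-\ceil{T/\Dt}}>0$, uniform in $\Dt$. The standard Doeblin theorem then yields a unique invariant probability $\mu_{h,\Dt}$ and
\[
\big\| \big((Q_{\Dt})_h\big)^{n\ceil{T/\Dt}}\phi - \mu_{h,\Dt}(\phi)\big\|_{\Linfty} \le (1-\bar\alpha)^n\,\|\phi - \mu_{h,\Dt}(\phi)\|_{\Linfty},
\]
which, since $n\ceil{T/\Dt}\approx n T/\Dt$, translates to a decay $\e^{-\kappa k\Dt}$ in the iteration count $k$ with $\kappa$ independent of $\Dt$; the upper bound in~\eqref{eq:minounif} (again after undoing the $h$-transform, using~\eqref{eq:hdt} and the bounds on $\Lambda_{\Dt}^{\ceil{T/\Dt}}$) gives a matching uniform upper control $(Q_{\Dt})_h^{\ceil{T/\Dt}}\varphi\le \bar\alpha^{-1}\zeta(\varphi)$, useful to bound prefactors.

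**Assembling the estimate and uniform constants.** With the uniform geometric ergodicity of $(Q_{\Dt})_h$ in hand, I would substitute $Q_h^k\phi = \mu_{h,\Dt}(\phi) + a_k$ into the reformulated expression for $\Phi_k(\mu)(\varphi)$ exactly as in the proof of Theorem~\ref{theo:general}, defining $\mu_{f,\Dt}^*(\varphi) = \mu_{h,\Dt}(h_{\Dt}^{-1}\varphi)/\mu_{h,\Dt}(h_{\Dt}^{-1})$. The error decomposes into terms controlled by $c_{h,\mu,\Dt} = \big(\mu(h_{\Dt})\mu_{h,\Dt}(h_{\Dt}^{-1})\big)^{-1}$ times $\bar\alpha^{k\Dt/T}$; because $\varepsilon \le h_{\Dt}\le \varepsilon^{-1}$ uniformly, one has $\varepsilon \le \mu(h_{\Dt})\le \varepsilon^{-1}$ and $\varepsilon\le\mu_{h,\Dt}(h_{\Dt}^{-1})\le\varepsilon^{-1}$ for \emph{every} $\mu\in\PX$, so $c_{h,\mu,\Dt}\le \varepsilon^{-2}$ is bounded independently of both $\mu$ and $\Dt$ --- this is precisely what makes the prefactor $C$ in the statement uniform (and independent of $\mu$, as stated). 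Handling the arbitrary remainder $r\in[0,\ceil{T/\Dt})$ in writing $k = n\ceil{T/\Dt} + r$ is done by a time-decomposition argument as in Step~3 of Theorem~\ref{theo:continuous}'s proof, absorbing the finitely-many-step transient into the constant. Uniqueness of $\mu_{f,\Dt}^*$ for each fixed $\Dt$ follows from the fixed-point argument already used for Theorem~\ref{theo:general}. \textbf{The main obstacle} is bookkeeping: ensuring that every constant ($\kappa$, the prefactor, the threshold $\Dt^*$) is shown to depend only on $T,\alpha,\varepsilon,C,\lambda$ and not on $\Dt$; in particular the passage between ``decay per $\ceil{T/\Dt}$ iterations'' and ``decay as $\e^{-\kappa k\Dt}$'' requires care since $\ceil{T/\Dt}$ is not exactly $T/\Dt$, but the discrepancy is at most one step and is harmless as $\Dt\le\Dt^*$.
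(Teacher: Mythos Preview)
Your proposal is correct and follows essentially the same approach as the paper: conjugate by $h_{\Dt}$, establish a uniform-in-$\Dt$ Doeblin minorization for $(Q_{\Dt})_h^{\ceil{T/\Dt}}$ using~\eqref{eq:minounif}, \eqref{eq:lambdadt} and~\eqref{eq:hdt}, invoke a physical-time Harris/Doeblin result (the paper cites \cite[Corollary~3.5]{lelievre2016partial}), and then replay Theorem~\ref{theo:general}'s proof with $W\equiv\ind$, bounding $C_{\mu,\Dt}$ via $\varepsilon\le h_{\Dt}\le\varepsilon^{-1}$. One minor remark: your appeal to the \emph{upper} bound in~\eqref{eq:minounif} to control prefactors is superfluous here --- in the paper that upper bound is consumed entirely inside Lemma~\ref{lem:estimateh} to establish~\eqref{eq:hdt}, and once you have~\eqref{eq:hdt} the prefactor bound $c_{h,\mu,\Dt}\le\varepsilon^{-2}$ follows without further use of~\eqref{eq:minounif}.
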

Let us note that the uniformity of the prefactor~$C$ in the initial condition is a consequence
of the boundedness of~$\X$. Indeed, in this case, we can choose $W\equiv 1$ as a Lyapunov
function, so the constant~$C_{\mu}$ in~\eqref{eq:Cmulast} can be uniformly
bounded using~\eqref{eq:hdt}. Such a uniformity does not hold for Theorem~\ref{theo:general}
since in that case~$\X$ was not assumed to be bounded. The important part of the theorem is
the control of~$C$ and~$\kappa$ with respect to the time step,
which provides convergence with respect to the physical time~$k\Dt$.

\begin{proof}
  The proof essentially relies on the fact that if~$\Qdt^f$ satisfies Assumption~\ref{as:unif},
  then~$\Qhdt$ defined as in Lemma~\ref{lem:Qh} satisfies a uniform minorization
  condition. For controlling the dependencies in the time step, we rely on
  Lemma~\ref{lem:estimateh}, and use the same notation.

  We want to prove a uniform minorization condition (in the sense
  of~\cite[Lemma 3.4]{lelievre2016partial}) for the operator defined by
  \[
  \Qhdt= \Lambda_{\Dt}^{-1} \hdt^{-1} \Qdt^f \hdt,
  \]
  and apply~\cite[Corollary 3.5]{lelievre2016partial}. Fix $T>0$. From~\eqref{eq:minounif}
  and~\eqref{eq:hdt} we have, for any $\varphi\geq 0$ and $x\in\X$,
  \begin{equation}
    \label{eq:intQh}
    \Qhdt^{\ceil{\frac{T}{\Dt}}}\varphi(x) = \Lambda_{\Dt}^{-\ceil{\frac{T}{\Dt}}} \hdt^{-1}
    \left(\Qdt^f\right)^{\ceil{\frac{T}{\Dt}}} (\hdt\varphi)(x)
    \geq  \Lambda_{\Dt}^{-\ceil{\frac{T}{\Dt}}} \varepsilon^2 \alpha \eta(\varphi).
  \end{equation}
  Moreover, from~\eqref{eq:lambdadt},
  \[
  \Lambda_{\Dt}^{-\ceil{\frac{T}{\Dt}}} =
  \e^{- \Dt (\lambda + \Dt c_{\Dt})  \ceil{\frac{T}{\Dt} } }
  \geq \e^{-2|\lambda| T} >0,
  \]
  upon possibly reducing~$\Dt^*$. Then,~\eqref{eq:intQh} becomes
  \[
  \forall\, x \in \X, \quad
  \Qhdt^{\ceil{\frac{T}{\Dt}}}(x,\cdot) \geq  \alpha\varepsilon^2\,  \e^{-2|\lambda| T} \eta(\cdot).
  \]
  As a result,~$\Qhdt$ satisfies the assumptions of~\cite[Corollary 3.5]{lelievre2016partial}:
  there exist a unique measure $\mu_{h,\Dt}\in\PX$, $C>0$, $\kappa >0$ such that, for any
  $\phi\in\Linfty(\X)$, $k\in\N$ and $\Dt \in(0, \Dt^*]$,
  \[
  \left\| \Qhdt^k\phi - \mu_{h,\Dt}(\phi) \right\|_{\Linfty} \leq C\, \e^{-\kappa k \Dt}
  \| \phi \|_{\Linfty}.
  \]
  This is a version of Lemma~\ref{lem:Qh}  uniform with respect to~$\Dt$.
  The result then follows by rewriting the proof of Theorem~\ref{theo:general},
  with~$\bar{\alpha}^k$ replaced by~$\e^{-\kappa k \Dt}$.

  It only remains to study the constant~$C_{\mu,\Dt}$ arising in Theorem~\ref{theo:general}
  (see~\eqref{eq:Cmulast}), which now also depends on~$\Dt$ through the eigenvector~$h_{\Dt}$
  and the invariant measure~$\mu_{h,\Dt}$.
  Since~$\X$ is bounded, we can actually choose a constant Lyapunov function, \textit{i.e.}
  $W=\ind$. Next, using~\eqref{eq:hdt} we obtain that for any~$\Dt\in(0,\Dt^*]$ and
  any~$\mu\in\PX$, it holds
  \[
  C_{\mu,\Dt} =  \frac{4}{\mu_{h,{\Dt}}(h_{\Dt}^{-1})}
  \big( 1 + \mu_{h,\Dt}(h_{\Dt}^{-1})\big) \frac{1}{\mu(h_{\Dt})}
  \leq 4 \varepsilon^{-2}(1 + \varepsilon^{-1}).
  \]
  This provides a uniform bound on~$C_{\mu,\Dt}$, which concludes the proof.
\end{proof}

We now show that the setting of Theorem~\ref{theo:unif} is natural, since
Assumption~\ref{as:unif} can be deduced from a similar assumption on the Markov dynamics $\Qdt$
when the evolution operator is $\Qdt^f=\e^{\Dt f} \Qdt$, which
corresponds to the discretization~\eqref{eq:simplediscr}. For
proving the condition on $\Qdt$, we refer to~\cite{lelievre2016partial} and the references
therein.
\begin{prop}
  \label{prop:unif}
  Assume that~$\X$ is bounded, $f\in C^{0}(\X)$, and the SDE~\eqref{eq:SDE}
  is discretized for a given time step $\Dt>0$ with a Markov chain~$(x_k)_{k\in\N}$
  whose evolution operator~$\Qdt$ is strong Feller and satisfies
  the following uniform minorization and boundedness condition: for a fixed $T>0$, there exist
  $\Dt^* >0$, $\eta\in\PX$ and $\alpha\in(0,1)$ such that, for any $\Dt\in(0,\Dt^*]$
    and $\varphi \in \Linfty(\X)$ with $\varphi \geq 0$,
    \[
 \forall\, x\in\X, \quad
  \alpha \eta(\varphi) \leq 
  \big(\Qdt\big)^{\ceil{\frac{T}{\Dt}}} \varphi(x) \leq 
  \frac{1}{\alpha}\eta(\varphi).
  \]
  Then, the transition operator~$\Qdt^f$ defined as $\Qdt^f= \e^{\Dt f}\Qdt$
  satisfies Assumption~\ref{as:unif}.
\end{prop}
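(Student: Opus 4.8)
The plan is to deduce the uniform minorization and boundedness for $\Qdt^f$ directly from the one assumed for $\Qdt$, using that on the compact space $\X$ the Feynman--Kac weight $\e^{\Dt\sum_i f(x_i)}$ accumulated over a physical time of order $T$ is squeezed between two positive constants independent of $\Dt$.

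First I would dispatch the strong Feller property. Since $\X$ is compact and $f\in C^{0}(\X)$, the multiplier $\e^{\Dt f}$ is continuous and bounded, so for every bounded measurable $\varphi$ the function $\Qdt^f\varphi=\e^{\Dt f}\,\Qdt\varphi$ is continuous, because $\Qdt\varphi$ is continuous by the strong Feller property of $\Qdt$. Hence $\Qdt^f$ is strong Feller for all $\Dt\in(0,\Dt^*]$, which is the first requirement in Assumption~\ref{as:unif}.

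For the two-sided bound, set $N=\ceil{T/\Dt}$ and use the probabilistic representation
\[
  \big((\Qdt^f)^N\varphi\big)(x)=\E_x\!\left[\varphi(x_N)\,\exp\!\left(\Dt\sum_{i=0}^{N-1}f(x_i)\right)\right],
\]
valid for $\varphi\ge 0$, where $(x_k)_{k\in\N}$ is the Markov chain with kernel $\Qdt$ started at $x_0=x$; this follows by induction on $N$ from $\Qdt^f=\e^{\Dt f}\Qdt$ together with the Markov property. Because $\|f\|_{\Linfty}<+\infty$ and $N\Dt\le T+\Dt\le T+\Dt^*$, the exponent is bounded in absolute value by $M:=(T+\Dt^*)\|f\|_{\Linfty}$, uniformly over $\Dt\in(0,\Dt^*]$ and over trajectories. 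Consequently $\e^{-M}(\Qdt)^N\varphi\le(\Qdt^f)^N\varphi\le\e^{M}(\Qdt)^N\varphi$ pointwise.

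To conclude, I would insert the assumed bound $\alpha\,\eta(\varphi)\le\big((\Qdt)^N\varphi\big)(x)\le\alpha^{-1}\eta(\varphi)$ into the previous inequality, obtaining
\[
  \alpha\,\e^{-M}\,\eta(\varphi)\le\big((\Qdt^f)^N\varphi\big)(x)\le\frac{\e^{M}}{\alpha}\,\eta(\varphi),
\]
and then set $\tilde\alpha:=\alpha\,\e^{-M}$, which lies in $(0,1)$ since $\alpha\in(0,1)$ and $M\ge 0$, and satisfies $\tilde\alpha^{-1}=\alpha^{-1}\e^{M}$. The last display is then precisely~\eqref{eq:minounif} with $\alpha$ replaced by $\tilde\alpha$, the same minorizing measure $\eta$, and the same $\Dt^*$, so Assumption~\ref{as:unif} holds. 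The argument is essentially routine; the only point deserving attention is that the number of iterations $N=\ceil{T/\Dt}$ obeys $N\Dt\le T+\Dt^*$, which is exactly what keeps the control of the Feynman--Kac weight — and hence the constant $\tilde\alpha$ — uniform in the time step $\Dt$.
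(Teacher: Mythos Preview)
Your proof is correct and follows essentially the same approach as the paper: both use the probabilistic representation of $(\Qdt^f)^N\varphi$, bound the Feynman--Kac weight uniformly by $\exp(\pm N\Dt\|f\|_{\Linfty})$, and reduce to the assumed two-sided bound for $(\Qdt)^N$. Your tracking of the constant via $N\Dt\le T+\Dt^*$ (rather than the paper's $N\Dt\le 2T$, which tacitly requires $\Dt^*\le T$) and your explicit verification that $\tilde\alpha\in(0,1)$ are in fact slightly more careful than the paper's argument.
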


\begin{proof}
  Since~$\Qdt$ is strong Feller and~$f$ is continuous,~$\Qdt^f$ is strong Feller.
  Then, for any $k\in\N$ and $\varphi\in\Linfty(\X)$,
  \[
  (\Qdt^f)^k\varphi(x) = \E_x\left[ \varphi(x_k)\, \e^{\Dt \sum_{i=0}^{k-1} f(x_i)}\right]
  \geq \e^{ -k \Dt \| f \|_{\Linfty}} \E_x\left[ \varphi(x_k) \right]
  = \e^{-k\Dt \| f \|_{\Linfty}} \big( (\Qdt)^k \varphi\big) (x).
  \]
  Taking $k=\ceil{T/\Dt}$ with~$0< \Dt \leq \Dt^*$ then shows that
  \[
  (\Qdt^f)^{\ceil{\frac{T}{\Dt}}}\varphi(x) \geq \e^{-2 T\| f \|_{\Linfty}}
  \big( Q^{\ceil{\frac{T}{\Dt}}}\varphi\big)(x)
  \geq \e^{-2 T\| f \|_{\Linfty}} \alpha \eta(\varphi).
  \]
  A similar computation for the upper bound allows to conclude the proof.
\end{proof}

%%%%%%%%%%%%%%%%%%%%%%%%%%%%%%%%%%%%%%%%%%%%%%%%%%%%%%%%%%%%%%%%%%%%%%%%%%%%%%%%%%%%%%%%%%%

\section{Discussion}
\label{sec:discussion}

The ideas developped in this work concerning the ergodicity of Feynman--Kac
semigroups solve several problems for which, to the best of our knowledge,
no solution was available. They are closely related to previous works and we want to highlight
two important connections.

First, as we mentionned in the introduction, our framework can be considered as an
extension of ergodic theory for Markov chains~\cite{meyn2012markov}, when the evolution
operator of the dynamics does not conserve probability. For this reason, we tried to formulate
our assumptions in the flavour of~\cite{hairer2011yet}. However, the spectral theory on which
we crucially rely in our study requires stronger conditions. 
This leaves open a few questions, as the converge of Feynman--Kac dynamics based on
Metropolis type kernels, which lack regularity,
or the case of non-Polish spaces, which may arise for stochastic partial differential equations.
Another interesting feature of our framework is that we
can prove ergodicity for Feynman--Kac dynamics for which the underlying Markov chain is not ergodic
-- a case we called Diffusion Monte Carlo (DMC) in analogy with quantum physics models
(see Proposition~\ref{prop:DMC}). Finally, we mention that it would be interesting to extend our spectral approach to situations 
where the expected rate of convergence is sub-geometric, see for instance~\cite{douc2004sub,douc2009sub} in the context of Markov chains. %For such more complex situations, a more appropriate approach might be to consider stochastic control arguments, see~\cite{chetrite2015var,nickelsen2018an}.

The other clear connection concerns Large Deviations theory. Indeed, one motivation
for studying Feynman--Kac dynamics is to prove large deviations principles for additive
functionals of Markov
chains~\cite{donsker1975variational,dembo2010large,wu2001large,kontoyiannis2005large},
which can be achieved by proving the existence of formulas such as~\eqref{eq:SCGF}. It is then
no surprise that the spectral theory we develop, although based on~\cite{bellet2006ergodic}, is
reminiscent of~\cite{kontoyiannis2005large}, and requires stronger assumptions than the
ones needed for proving ergodicity in~\cite{hairer2011yet}. However, the tools we use seem new
in this context, and more adapted to the situation at hand, for instance the Krein--Rutman
theorem based on the minorization condition. In particular,~\cite{kontoyiannis2005large}
(like~\cite{feng2006large}) makes use of nonlinear generators
related to an optimal control problem, and we show in~\cite{ferre2019fine} that our
linear spectral strategy can indeed be used for obtaining large deviations results in
weighted topologies, for possibly degenerate diffusions.

\subsection*{Acknowledgements}
The authors are grateful to Jonathan C. Mattingly for interesting discussions at a
preliminary stage of this work. The authors also warmly thank Nicolas Champagnat and Denis
Villemonais for pointing out a gap in one argument in the first version of the manuscript.
The PhD of Grégoire Ferré is supported by the Labex Bézout.
The work of Gabriel Stoltz was funded in part by the Agence Nationale de la Recherche, under
grant ANR-14-CE23-0012 (COSMOS). Gabriel Stoltz and Mathias Rousset are supported
by the European Research Council under the European
Union’s Seventh Framework Programme (FP/2007-2013)/ERC Grant Agreement number 614492.
The work of Mathias Rousset is supported by INRIA Rennes and IRMAR.
We also benefited from the scientific environment of the Laboratoire International
Associé between the Centre National de la Recherche Scientifique and the University of Illinois at
Urbana-Champaign.

%%%%%%%%%%%%%%%%%%%%%%%%%%%%%%%%%%%%%%%%%%%%%%%%%%%%%%%%%%%%%%%%%%%%%%%%%%%%%%%%%%%%%%%%%%%
\appendix

\section{Stability of Markov chains}
\label{sec:tools}

In this section, we recall the results presented in~\cite{hairer2011yet}. We consider a
measurable space~$\X$ and Markov chain~$(x_k)_{k\geq 0}$ with transition kernel~$Q$ on~$\X$.
By transition kernel, we mean that (i) for all $x\in\X$, $Q(x, \cdot)$ is a positive measure
on~$\X$, (ii) for any measurable set $A\subset\X$, $Q(\cdot,A)$ is measurable, and (iii) $Q\ind=\ind$.
In the notation of Section~\ref{sec:results}, $Q$ is a kernel operator (\textit{i.e.}~(i)
and~(ii) are satisfied) such that $Q\ind=\ind$.

The stability of Markov dynamics can be obtained from minorization and Lyapunov 
conditions~\cite{meyn2012markov,bellet2006ergodic,hairer2011yet}.

\begin{assumption}
  \label{as:lyapunov}
  There exist a function $\mathcal{W}:\X\to[1, +\infty)$ and constants $C \geq 0$, $\gamma \in (0,1)$
    such that
    \begin{equation}
      \label{eq:lyapunov}
      \forall \, x\in \X, \quad (Q\mathcal{W})(x) \leq \gamma \mathcal{W}(x) + C.
    \end{equation}  
\end{assumption}

Given such a Lyapunov function, we consider the associated functional space as
in~\eqref{eq:LW}.\begin{footnote}{Compared to~\cite{hairer2011yet}, we replace~$\mathcal{W}$
    by $\mathcal{W}+1$; this is for notational convenience only.}\end{footnote}
A second key ingredient in the ergodicity of $Q$ is the following minorization condition.

\begin{assumption}
  \label{as:minorization}
  There exist $\alpha\in (0,1)$ and $\eta\in\PX$ such that
  \begin{equation}
    \label{eq:minorization}
    \underset{x\in\mathcal{C}}{\inf}\ Q(x, \cdot) \geq \alpha \eta(\, \cdot\, ),
  \end{equation}
  where $\mathcal{C}=\{ x\in\X \, | \, \mathcal{W}(x) \leq R +1 \}$ for some $R> 2 C/(1-\gamma)$,
  and $\gamma$, $C$ are the constants from Assumption~\ref{as:lyapunov}.
\end{assumption}

The following result holds under these conditions (see~\cite[Theorem 1.2]{hairer2011yet}).

\begin{theorem}
\label{theo:HM}
Let Assumptions~\ref{as:lyapunov} and~\ref{as:minorization} hold. Then,~$Q$ has a unique
invariant measure~$\mu^\star$, which is such that $\mu^\star(\mathcal{W})<+\infty$. Moreover,
there exist $C >0$ and $\bar{\alpha}\in (0,1)$ such that, for any $\varphi\in \Linfty_{\mathcal{W}}(\X)$,
\[
\forall\, k\geq 0, \quad \| Q^k \varphi - \mu^\star(\varphi) \|_{\Linfty_{\mathcal{W}}}
\leq C \bar{\alpha}^k \| \varphi - \mu^\star(\varphi) \|_{\Linfty_{\mathcal{W}}}.
\]
\end{theorem}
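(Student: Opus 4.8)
The plan is to follow the coupling argument of Hairer and Mattingly. First I would iterate Assumption~\ref{as:lyapunov} to obtain, for every $k\geq 0$,
\[
Q^k\mathcal{W} \leq \gamma^k\,\mathcal{W} + \frac{C}{1-\gamma},
\]
so that $Q$ preserves the set of probability measures $\mu$ with $\mu(\mathcal{W})<+\infty$ and propagates these moments with a bound uniform in $k$. With a parameter $\beta>0$ to be fixed, I would then equip $\X$ with the metric $d_\beta(x,y)=\ind_{x\neq y}\bigl(1+\beta\mathcal{W}(x)+\beta\mathcal{W}(y)\bigr)$ (the additive $1$ ensures the triangle inequality, and $d_\beta\geq 1$ off the diagonal), and write $\rho_{d_\beta}$ for the associated transportation distance on $\{\mu\in\PX:\mu(\mathcal{W})<+\infty\}$, defined as the infimum of $\int d_\beta\,d\pi$ over couplings $\pi$ of the two measures.

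The heart of the proof is a one-step contraction: for a suitable $\beta$ there is $\bar\alpha\in(0,1)$ such that $\rho_{d_\beta}\bigl(Q(x,\cdot),Q(y,\cdot)\bigr)\leq\bar\alpha\,d_\beta(x,y)$ for all $x,y\in\X$. I would prove this by bounding the cost of an explicit coupling, splitting according to $s:=\mathcal{W}(x)+\mathcal{W}(y)$. When $s>R$ the product coupling $Q(x,\cdot)\otimes Q(y,\cdot)$ already works: its cost is at most $1+\beta(Q\mathcal{W})(x)+\beta(Q\mathcal{W})(y)\leq 1+\beta\gamma s+2\beta C\leq 1+\beta\gamma_0 s$ with $\gamma_0:=\gamma+2C/R$, and since $R>2C/(1-\gamma)$ (Assumption~\ref{as:minorization}) one has $\gamma_0<1$, so this is $\leq\bar\alpha(1+\beta s)$ as soon as $\beta\geq(1-\bar\alpha)/\bigl(R(\bar\alpha-\gamma_0)\bigr)$ and $\bar\alpha>\gamma_0$. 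When $s\leq R$, both $x$ and $y$ lie in the small set $\mathcal{C}$ and the minorizations $Q(x,\cdot)\geq\alpha\eta$, $Q(y,\cdot)\geq\alpha\eta$ give a coupling equal to a common draw from $\eta$ with probability $\alpha$, so that $d_\beta=0$ on that event; on the complementary event the contribution of the $\beta\mathcal{W}$ part of $d_\beta$ is bounded by $\beta\bigl((Q\mathcal{W})(x)+(Q\mathcal{W})(y)\bigr)\leq\beta M$ with $M:=2(\gamma(R+1)+C)$ (using $\mathcal{W}\leq R+1$ on $\mathcal{C}$), whence a total cost $\leq(1-\alpha)+\beta M$, which is $\leq\bar\alpha\leq\bar\alpha\,d_\beta(x,y)$ once $\bar\alpha>1-\alpha$ and $\beta\leq\bigl(\bar\alpha-(1-\alpha)\bigr)/M$. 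These two constraints on $\beta$ are compatible: choosing $\bar\alpha\in(0,1)$ close enough to $1$ the admissible interval for $\beta$ is nonempty, and this is exactly where the quantitative hypothesis $R>2C/(1-\gamma)$ is used.

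From the one-step contraction I would iterate to $\rho_{d_\beta}(\mu Q^k,\nu Q^k)\leq\bar\alpha^k\rho_{d_\beta}(\mu,\nu)$, note that $\{\mu\in\PX:\mu(\mathcal{W})<+\infty\}$ is complete for $\rho_{d_\beta}$, and apply the Banach fixed point theorem to get a unique invariant measure $\mu_*$; the relation $\mu_*(\mathcal{W})=\mu_*(Q\mathcal{W})\leq\gamma\mu_*(\mathcal{W})+C$ then gives $\mu_*(\mathcal{W})\leq C/(1-\gamma)<+\infty$, and in particular $\rho_{d_\beta}(\mu Q^k,\mu_*)\leq\bar\alpha^k\rho_{d_\beta}(\mu,\mu_*)$. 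Finally I would convert this into the stated bound in $\Linfty_{\mathcal{W}}(\X)$ by the elementary half of Kantorovich duality: a function $\psi$ with $\|\psi\|_{\Linfty_{\mathcal{W}}}=L$ satisfies $|\psi(x)-\psi(y)|\leq L(\mathcal{W}(x)+\mathcal{W}(y))\leq(L/\beta)\,d_\beta(x,y)$, hence directly from the definition of $\rho_{d_\beta}$ as an infimum over couplings, $|(\mu-\nu)(\psi)|\leq(L/\beta)\,\rho_{d_\beta}(\mu,\nu)$. Applying this with $\psi=\varphi-\mu_*(\varphi)$, $\mu=\delta_x Q^k$, $\nu=\mu_*$, and bounding $\rho_{d_\beta}(\delta_x,\mu_*)\leq 1+\beta\mathcal{W}(x)+\beta\mu_*(\mathcal{W})\leq C'\mathcal{W}(x)$ (since $\mathcal{W}\geq 1$), yields $|(Q^k\varphi)(x)-\mu_*(\varphi)|\leq(C'/\beta)\,\bar\alpha^k\,\|\varphi-\mu_*(\varphi)\|_{\Linfty_{\mathcal{W}}}\,\mathcal{W}(x)$; dividing by $\mathcal{W}(x)$ and taking the supremum over $x$ gives the theorem with $C=C'/\beta$.

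I expect the one-step contraction to be the only delicate point, and within it the simultaneous calibration of $\beta$ and of a rate $\bar\alpha<1$ valid in both the ``far'' regime $s>R$ and the ``small-set'' regime $s\leq R$ — which is precisely where the quantitative assumption $R>2C/(1-\gamma)$ in Assumption~\ref{as:minorization} is needed. Once the right $R$ and $\beta$ are in place, the iteration, the fixed point step, and the passage to the weighted supremum norm are routine.
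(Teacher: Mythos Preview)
The paper does not actually give its own proof of this theorem: it is stated in Appendix~\ref{sec:tools} as a result recalled from~\cite{hairer2011yet}, with the sentence ``The following result holds under these conditions (see~\cite[Theorem 1.2]{hairer2011yet})'' and no further argument. Your proposal is precisely the coupling argument of Hairer and Mattingly from that reference --- the weighted discrete metric $d_\beta$, the one-step contraction obtained by splitting according to $\mathcal{W}(x)+\mathcal{W}(y)\lessgtr R$, the Banach fixed point, and the passage to the $\Linfty_{\mathcal{W}}$-norm via duality --- so it is correct and coincides with the proof the paper is citing.
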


\section{Useful theorems}
\label{sec:krein}
We remind here some definitions and results around the Krein--Rutman theorem, as well as some basic
results from analysis. Let us start with some operator theoretic definitions
from~\cite{nussbaum1970radius,reed1980functional,deimling2010nonlinear,bellet2006ergodic}.

\begin{definition}
  \label{def:operators}

  For a Banach space~$E$ and an operator $T\in\mathcal{B}(E)$, we denote by~$\Lambda(T)$
  its spectral radius defined by:
  \[
  \Lambda(T) = \underset{k\to+\infty}{\lim}\ \big\| T^k \big\|_{\mathcal{B}(E)}^{\frac{1}{k}}
  = \underset{k \geq 1}{\inf}\ \big\| T^k \big\|_{\mathcal{B}(E)}^{\frac{1}{k}}.
  \]
  We denote by~$\theta(T)$ the essential spectral radius of~$T$ defined by
  (see~\cite[Eq.~(1.14)]{nussbaum1998eigen} and~\cite[Theorem~1]{nussbaum1970radius}):
  \[
  \theta(T) = \underset{k\to+\infty}{\lim}\, \Big(\inf \big\{ \big\| T^k - Q
  \big\|_{\mathcal{B}(E)},\ Q
  \ \mathrm{ compact} \big\}\Big)^{\frac{1}{k}}
  = \underset{k \geq 1}{\inf}\, \Big(\inf \big\{ \big\| T^k - Q \big\|_{\mathcal{B}(E)},\ Q
  \ \mathrm{ compact} \big\}\Big)^{\frac{1}{k}}.
  \]

  An operator $T\in\mathcal{B}(E)$ is said to be compact if
  it maps bounded sets into precompact sets. In other words,~$T$ is compact if, for any
  bounded sequence~$(u_n)_{n\in\N}$ in~$E$, there is a subsequence~$(n_k)_{k\in\N}$
  such that~$(Tu_{n_k})_{k\in\N}$ converges in~$E$, see~\cite{reed1980functional}.

\end{definition}

In order to recall the Krein--Rutman theorem, let us first give some definitions for cones in
Banach spaces.
\begin{definition}
\label{def:cone}
Let~$E$ be a Banach space. A closed convex set $\Kb\subset E$ is said to be a cone if
$\Kb \cap -\Kb =\{0\}$ and for all $u\in \Kb$ and $\alpha \in \R_+$, it holds $\alpha u \in \Kb$.
A cone is total if the norm closure of $\Kb - \Kb$ is equal to~$E$.
\end{definition}

We now recall a weak version of the Krein--Rutman theorem, which can be found
in~\cite[Theorem~1.1]{nussbaum1998eigen}. Interesting remarks and comments are
also available in~\cite[Section~19.8]{deimling2010nonlinear}.

\begin{theorem}
\label{theo:weakkrein}
Let~$E$ be a Banach space, $\Kb\subset E$ a total cone, and $T\in\mathcal{B}(E)$ be such that
$\theta(T)<\Lambda(T)$ and $T\Kb\subset \Kb$. Then~$\Lambda(T)$ is an eigenvalue
of~$T$ with an eigenvector in~$\Kb$.
\end{theorem}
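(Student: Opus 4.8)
The plan is to use the spectral gap $\theta(T)<\Lambda(T)$ to locate $\Lambda(T)$ as a pole of the resolvent, and then to exploit the cone‑invariance of the resolvent evaluated at real parameters larger than $\Lambda(T)$ in order to produce a nonnegative eigenvector. Throughout I would work with the complexification of $E$ when $E$ is real; this changes neither $\Lambda(T)$ nor $\theta(T)$, and all the cone manipulations below only involve real parameters.

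First I would set $\Lambda:=\Lambda(T)$ and recall that $\Lambda\in\sigma(T)$, being a modulus‑maximal spectral value. By the classical description of the spectrum outside the essential spectral radius (the defining feature of Nussbaum's $\theta(T)$; see the references in Appendix~\ref{sec:krein}), the part of $\sigma(T)$ contained in $\{\,|z|>\theta(T)\,\}$ consists only of isolated eigenvalues of finite algebraic multiplicity. In particular $\Lambda$ is isolated in $\sigma(T)$ and is a pole of $R(\lambda,T):=(\lambda-T)^{-1}$ of some finite order $m\geq 1$. Letting $P$ be the Riesz spectral projection onto $\{\Lambda\}$ and $N:=(T-\Lambda)P$ (a nilpotent operator with $N^m=0\neq N^{m-1}$), the principal part of the Laurent expansion at $\Lambda$ is $\sum_{k=1}^{m}(\lambda-\Lambda)^{-k}N^{k-1}$, so that
\[
  B:=N^{m-1}=(T-\Lambda)^{m-1}P\neq 0,\qquad (T-\Lambda)B=(T-\Lambda)^mP=0,
\]
and $(\lambda-\Lambda)^m R(\lambda,T)\to B$ in $\mathcal{B}(E)$ as $\lambda\downarrow\Lambda$.

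Next I would bring in positivity. For real $\lambda>\Lambda$ the Neumann series $R(\lambda,T)=\sum_{n\geq 0}\lambda^{-n-1}T^n$ converges in $\mathcal{B}(E)$, and for any $u\in\Kb$ each partial sum is a nonnegative linear combination of the points $T^nu\in\Kb$ (using $T\Kb\subset\Kb$), hence lies in the convex cone $\Kb$; since $\Kb$ is closed, $R(\lambda,T)u\in\Kb$. Thus $(\lambda-\Lambda)^m R(\lambda,T)\Kb\subset\Kb$ for $\lambda>\Lambda$, and letting $\lambda\downarrow\Lambda$, closedness of $\Kb$ gives $B\Kb\subset\Kb$. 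Because $B\neq 0$ and $\Kb$ is total, $B$ cannot vanish on all of $\Kb$ — otherwise it would vanish on $\Kb-\Kb$ and, by density and continuity, on $E$. Hence there is $u\in\Kb$ with $v:=Bu\in\Kb\setminus\{0\}$, and $(T-\Lambda)B=0$ gives $Tv=\Lambda v$. So $v\in\Kb$ is an eigenvector of $T$ with eigenvalue $\Lambda(T)$, as required.

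The only delicate step — and the one place where the hypothesis $\theta(T)<\Lambda(T)$ is genuinely needed — is the passage from ``$\Lambda(T)\in\sigma(T)$'' to ``$\Lambda(T)$ is a pole of $R(\cdot,T)$ of finite order'': this is precisely what the spectral gap buys, through the finiteness of the algebraic multiplicity of peripheral eigenvalues, and it is what makes the normalized limit $B$ both nonzero and range‑contained in the true eigenspace $\ker(T-\Lambda)$ rather than only in a generalized eigenspace. The positivity argument itself is soft: it uses merely that $\Kb$ is a closed convex cone invariant under $T$, together with totality to ensure that $B$ does not annihilate $\Kb$.
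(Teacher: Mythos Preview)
The paper does not actually prove this theorem: it is recalled in Appendix~\ref{sec:krein} as a known result, with a reference to~\cite{nussbaum1998eigen}, so there is no in-paper proof to compare against. That said, your resolvent-based strategy is one of the standard routes to Krein--Rutman type statements, and the core mechanism --- positivity of $R(\lambda,T)$ on $\Kb$ for real $\lambda>\Lambda$ via the Neumann series, passage to the leading Laurent coefficient $B=(T-\Lambda)^{m-1}P$ by letting $\lambda\downarrow\Lambda$, and totality of $\Kb$ to ensure $B\Kb\neq\{0\}$ --- is correct.

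There is, however, one genuine gap. You write ``recall that $\Lambda\in\sigma(T)$, being a modulus-maximal spectral value'', but this does not follow from general spectral theory: the spectral radius is $\max\{|z|:z\in\sigma(T)\}$, and nothing forces a spectral value to sit at the \emph{real} point~$\Lambda$ (a rotation matrix has spectral radius~$1$ with $1\notin\sigma$). That $\Lambda\in\sigma(T)$ is itself a consequence of the positivity $T\Kb\subset\Kb$, typically obtained by a Pringsheim-type argument: for $u\in\Kb$ and $\phi$ in the dual cone, $\sum_{n\geq 0}\phi(T^nu)\,w^{n+1}$ is a power series with nonnegative coefficients, so if its radius of convergence is exactly $1/\Lambda$ then Pringsheim forces $w=1/\Lambda$ (i.e.\ $z=\Lambda$) to be singular for the resolvent. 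Arranging this under the sole hypothesis that $\Kb$ is total (rather than generating or normal) takes some care, and is precisely where the cited references do the work. Once $\Lambda\in\sigma(T)$ is secured, the rest of your argument goes through as written; note also a minor notational slip in the $m=1$ case, where your $B=N^{0}$ should be read as the projection $P$ rather than the identity.
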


In Theorem~\ref{theo:weakkrein}, there is no uniqueness of the eigenvector. The non
degeneracy can be otained under stronger positivity conditions on the operator~$T$, as made precise
in~\cite[Theorems~19.3 and~19.5]{deimling2010nonlinear}.
In order to control the essential spectral radius and apply the Krein--Rutman theorem, we will need
the following classical results, see~\cite[Theorem~11.28]{rudin1991real}
and~\cite[Theorem~2.7.19]{schwartz1991analyse}.

\begin{theorem}[Ascoli]
  \label{theo:ascoli}
  Let $(\mathcal{Y},\mathrm{d}_{\mathcal{Y}})$ be a compact metric space 
  and~$C^0(\mathcal{Y})$ be the space of continuous functions over~$\mathcal{Y}$ endowed with the
  uniform norm $\|f\|_{C^0}=\sup_{y\in\mathcal{Y}}|f(y)|$. Consider a uniformly
  bounded and equicontinuous sequence~$(f_n)_{n\in\N}$, \textit{i.e.} a sequence for
  which there exists $M>0$ such that $\| f_n \|_{C^0}\leq M$ for all $n\geq 1$, and
  for any $\varepsilon>0$ there exists $\delta >0$ such that
  $\mathrm{d}_{\mathcal{Y}}(x,y)\leq \delta$ implies $|f(x) - f(y)|\leq \varepsilon$.
  Then~$(f_n)_{n\in\N}$ converges in the uniform norm to some limit~$f$ up to extraction.
\end{theorem}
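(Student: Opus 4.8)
The plan is to combine a diagonal extraction on a countable dense set with a standard $\varepsilon/3$ estimate exploiting equicontinuity and total boundedness. First I would use that a compact metric space is separable, so there is a countable dense subset $\{y_j\}_{j\geq 1}\subset\mathcal{Y}$. Since $\|f_n\|_{C^0}\leq M$ for all $n$, the real sequence $(f_n(y_1))_{n\geq 1}$ is bounded, hence admits a convergent subsequence indexed by an infinite set $N_1\subset\N$. Inductively, given an infinite set $N_j$ along which $(f_n(y_i))_n$ converges for all $i\leq j$, I extract an infinite $N_{j+1}\subset N_j$ along which $(f_n(y_{j+1}))_n$ also converges. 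Letting $n_k$ be the $k$-th smallest element of $N_k$ and $g_k:=f_{n_k}$, the diagonal subsequence $(g_k)_{k\geq 1}$ satisfies: $(g_k(y_j))_{k\geq 1}$ converges for every fixed $j$, because $n_k\in N_j$ for all $k\geq j$.

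Next I would upgrade this pointwise convergence on $\{y_j\}$ to uniform convergence on $\mathcal{Y}$ by showing that $(g_k)$ is Cauchy in $\|\cdot\|_{C^0}$. Fix $\varepsilon>0$. By equicontinuity, choose $\delta>0$ such that $d_{\mathcal{Y}}(x,y)\leq\delta$ implies $|f_n(x)-f_n(y)|\leq\varepsilon/3$ for all $n$. By total boundedness of $\mathcal{Y}$, cover it by finitely many balls $B(w_1,\delta/2),\dots,B(w_p,\delta/2)$; using density, pick in each ball a point $z_i\in\{y_j\}$, so that the balls $B(z_1,\delta),\dots,B(z_p,\delta)$ still cover $\mathcal{Y}$. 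Since $(g_k(z_i))_k$ converges for each of the finitely many indices $i$, there is $K$ with $|g_k(z_i)-g_\ell(z_i)|\leq\varepsilon/3$ for all $k,\ell\geq K$ and all $i$. For any $y\in\mathcal{Y}$, choosing $i$ with $d_{\mathcal{Y}}(y,z_i)\leq\delta$ gives, for $k,\ell\geq K$,
\[
|g_k(y)-g_\ell(y)|\leq |g_k(y)-g_k(z_i)| + |g_k(z_i)-g_\ell(z_i)| + |g_\ell(z_i)-g_\ell(y)| \leq \varepsilon,
\]
and taking the supremum over $y\in\mathcal{Y}$ yields $\|g_k-g_\ell\|_{C^0}\leq\varepsilon$.

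Finally, $C^0(\mathcal{Y})$ equipped with the uniform norm is complete, since a uniform limit of continuous functions on a metric space is continuous; hence the uniformly Cauchy sequence $(g_k)$ converges in $C^0(\mathcal{Y})$ to a function $f$, which is the asserted subsequential limit. I expect the only mildly delicate points to be the bookkeeping of the nested sets $N_j$ in the diagonal argument, and the small observation that the finite $\delta$-net can be taken within the dense set $\{y_j\}$ on which pointwise convergence was secured; all the remaining estimates are routine.
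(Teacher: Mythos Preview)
Your proof is correct and follows the standard diagonal-extraction argument for the Arzel\`a--Ascoli theorem. Note, however, that the paper does not actually prove this statement: it merely records it as a classical result with references to Rudin and Schwartz, so there is no ``paper's own proof'' to compare against. Your argument is precisely the textbook one those references contain.
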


\begin{theorem}[Heine--Cantor]
  \label{theo:heine}
  Consider $f:E\to F$ where $(E,\mathrm{d}_E)$ and $(F,\mathrm{d}_F)$ are two metric spaces and~$E$
  is compact.
  Then, if~$f$ is continuous, it is uniformly continuous: for any $\varepsilon >0$, there is
  $\delta >0$ such that for any $x$, $x'\in E$ with $\mathrm{d}_E( x, x') \leq \delta$,
  it holds $\mathrm{d}_F( f(x) , f(x') ) \leq \varepsilon$.
\end{theorem}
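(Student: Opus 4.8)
The plan is to prove this by contradiction, using the sequential compactness of the compact metric space $E$; this is the standard route. First I would negate the conclusion: assuming $f$ is continuous but \emph{not} uniformly continuous, there is a fixed $\varepsilon_0 > 0$ such that for every $n \geq 1$ one can find points $x_n, x_n' \in E$ with $d_E(x_n, x_n') \leq 1/n$ but $d_F(f(x_n), f(x_n')) > \varepsilon_0$. The goal is then to derive a contradiction from the existence of these two sequences.

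Since $E$ is compact it is sequentially compact, so I would extract a subsequence $(x_{n_k})_{k \in \N}$ converging to some limit $x_\infty \in E$. The bound $d_E(x_{n_k}, x_{n_k}') \leq 1/n_k \to 0$ together with the triangle inequality in $E$ forces $x_{n_k}' \to x_\infty$ as well. Continuity of $f$ at $x_\infty$ then gives $f(x_{n_k}) \to f(x_\infty)$ and $f(x_{n_k}') \to f(x_\infty)$ in $F$, whence $d_F(f(x_{n_k}), f(x_{n_k}')) \to 0$ by the triangle inequality in $F$. This contradicts $d_F(f(x_{n_k}), f(x_{n_k}')) > \varepsilon_0$ for all $k$, and the contradiction proves the claim.

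An alternative I would keep in reserve, avoiding sequences altogether, is the finite-subcover argument: given $\varepsilon > 0$, pick for each $x \in E$ a radius $\delta_x > 0$ with $d_E(x, y) < \delta_x \Rightarrow d_F(f(x), f(y)) < \varepsilon/2$, cover $E$ by the balls $B(x, \delta_x / 2)$, extract a finite subcover $B(x_1, \delta_{x_1}/2), \dots, B(x_N, \delta_{x_N}/2)$, and set $\delta = \tfrac12 \min_{1 \leq i \leq N} \delta_{x_i}$; a short two-step triangle-inequality estimate then yields $d_F(f(x), f(y)) < \varepsilon$ whenever $d_E(x, y) < \delta$. Honestly there is no real obstacle here --- this is a textbook fact, and the only places needing a touch of care are the bookkeeping that converts ``$d_E(x_{n_k}, x_{n_k}') \to 0$ with both sequences convergent'' into convergence of the $F$-distances and, in the covering proof, checking that halving the radii in the subcover still leaves room for the triangle inequality. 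One could equally just invoke \cite[Theorem~2.7.19]{schwartz1991analyse} in lieu of reproducing the argument.
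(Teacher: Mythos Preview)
Your proof is correct; both the sequential-compactness argument and the finite-subcover variant are standard and valid. The paper itself does not prove this theorem at all --- it simply records it as a classical result with a citation to \cite[Theorem~2.7.19]{schwartz1991analyse}, which is precisely the alternative you mention at the end.
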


We close this section with some results in probability theory.
The next lemma can be found in~\cite[Lemma~4.14]{hairer2006ergodic}.

\begin{lemma}
  \label{lem:tight}
  If $\X$ is a Polish space and $\mu\in\PX$, then the familly constituted of the single
  measure~$\mu$ is tight, \textit{i.e.} for any $\varepsilon>0$, there exists a compact set
  $K\subset\X$ such that $\mu(K)\geq 1 - \varepsilon$.
\end{lemma}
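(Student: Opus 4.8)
The plan is to invoke the two defining features of a Polish space — separability and the existence of a complete compatible metric — together with the fact that a finite measure is continuous along increasing sequences of sets. Fix once and for all a complete metric $d$ metrizing the topology of $\X$, fix a countable dense subset $\{x_n\}_{n\geq 1}$ of $\X$, and fix $\varepsilon>0$.

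First I would note that, for every integer $k\geq 1$, density of $\{x_n\}$ implies that the closed balls $\overline{B}(x_n,1/k)$, $n\geq 1$, cover $\X$. Since $\mu(\X)=1$ and the sets $\bigcup_{n=1}^{N}\overline{B}(x_n,1/k)$ increase to $\X$ as $N\to+\infty$, continuity of $\mu$ from below yields an index $N_k\geq 1$ with
\[
\mu\left(\bigcup_{n=1}^{N_k}\overline{B}(x_n,1/k)\right)\geq 1-\varepsilon\, 2^{-k}.
\]
I would then set
\[
K=\bigcap_{k\geq 1}\ \bigcup_{n=1}^{N_k}\overline{B}(x_n,1/k).
\]
A union bound on complements gives $\mu(K^c)\leq\sum_{k\geq 1}\varepsilon\,2^{-k}=\varepsilon$, so $\mu(K)\geq 1-\varepsilon$, which is the required estimate. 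It then remains to verify that $K$ is compact: it is closed, being an intersection of closed sets, and it is totally bounded, since for every $k$ it is contained in the finite union of $1/k$-balls $\overline{B}(x_n,1/k)$, $n\leq N_k$. A closed, totally bounded subset of a complete metric space is compact — concretely, any sequence in $K$ admits a Cauchy subsequence (extracted diagonally using the finite $1/k$-covers), which converges in $\X$ by completeness, and its limit lies in $K$ because $K$ is closed.

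The only delicate point, and the reason the hypothesis ``Polish'' is used rather than merely ``separable metric'', is this last compactness step: one must be able to choose a \emph{complete} metric so that total boundedness of $K$ upgrades to compactness. The diagonal construction of $K$ and the measure estimate use only separability and countable additivity, so no real difficulty arises there; I expect the completeness/compactness argument to be the (mild) crux.
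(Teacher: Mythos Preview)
Your proof is correct and is the standard argument for Ulam's theorem. The paper itself does not give a proof of this lemma: it simply states the result and cites \cite[Lemma~4.14]{hairer2006ergodic}. So your argument in fact supplies strictly more than the paper does, and there is nothing to compare it against beyond noting that it matches the classical construction one finds in the cited reference.
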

We finally present results concerning ultra-Feller operators, extending the
ones of~\cite[Appendix A]{hairer2009non} for transition kernels that are not normalized.
Recall that the total variation distance between
two positive measures $\mu,\nu\in\mathcal{M}(\X)$ is defined by:
\begin{equation}
  \label{eq:TVdist}
\| \mu - \nu\|_{\mathrm{TV}} =
\underset{
  \begin{array}{c}{\scriptstyle \varphi\in\Linfty(\X)}
    \\ {\scriptstyle\|\varphi\|_{\Linfty}\leq 1}
  \end{array}
} {\sup}
\ \intX \varphi \, d\mu - \intX \varphi \, d\nu.
\end{equation}
\begin{definition}[Ultra-Feller]
  \label{def:ultra}
  A kernel operator~$Q$ is ultra-Feller if the mappping
  $x\mapsto Q(x,\cdot) \in\mathcal{M}(\X) $ is continuous in the total variation
  distance~\eqref{eq:TVdist}.
  \end{definition}

The next lemma, used to show that an operator is ultra-Feller, is adapted
from~\cite[Appendix A]{hairer2009non}.

\begin{lemma}
  \label{lem:PQ}
  Suppose that $P$ and $Q$ are two  kernel operators over a Polish space~$\X$
  that satisfy the following properties:
  \begin{itemize}
  \item for all~$\varphi\in\Linfty(\X)$, $Q\varphi$ is continuous and finite;
  \item for all~$\psi$ such that~$|\psi| \leq Q\ind$, $P\psi$ is continuous and finite.
  \end{itemize}
  Then $P Q$ is ultra-Feller.
\end{lemma}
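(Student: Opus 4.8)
The plan is to unwind ``ultra-Feller'' into a sequential statement, use the hypothesis on $P$ to produce a \emph{setwise}-convergent family of measures, and then upgrade that convergence to a total-variation one by exploiting the strong Feller property of $Q$; the last upgrade is the only delicate point. \emph{Step 1 (reduction).} Fix a sequence $x_n\to x$ in $\X$; the goal is $\|PQ(x_n,\cdot)-PQ(x,\cdot)\|_{\mathrm{TV}}\to 0$ (that $PQ$ is a genuine finite kernel operator follows from $PQ(x,\X)=P(Q\ind)(x)<+\infty$, applying the second hypothesis to $\psi=Q\ind$, and from $PQ(\cdot,A)=P(Q(\cdot,A))$ being continuous). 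Writing $G:=Q\ind$, which is continuous and finite, and using $(PQ)\varphi=P(Q\varphi)$ together with~\eqref{eq:TVdist}, it suffices to show $\sup_{\|\varphi\|_{\Linfty}\le 1}|P(Q\varphi)(x_n)-P(Q\varphi)(x)|\to 0$. For $\|\varphi\|_{\Linfty}\le 1$ the function $Q\varphi$ is continuous with $|Q\varphi|\le G$, and the second hypothesis applied to $\psi=G\ind_B$ shows that $B\mapsto\tilde\mu_n(B):=P(G\ind_B)(x_n)$ and $B\mapsto\tilde\mu(B):=P(G\ind_B)(x)$ are finite measures with $\tilde\mu_n(B)\to\tilde\mu(B)$ for every Borel $B$, i.e.\ $\tilde\mu_n\to\tilde\mu$ setwise, the masses $\tilde\mu_n(\X)=P(G)(x_n)$ being bounded in $n$; moreover $P(Q\varphi)(x_n)=\intX (Q\varphi/G)\,d\tilde\mu_n$ with $\|Q\varphi/G\|_{\Linfty}\le 1$ (convention $0/0=0$, noting $\tilde\mu_n(\{G=0\})=0$).

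\emph{Step 2 (localization and uniform integrability).} Setwise convergence implies weak convergence of measures, so by Prokhorov's theorem $\{\tilde\mu_n\}\cup\{\tilde\mu\}$ is tight; given $\varepsilon>0$ I would choose, after also restricting to $\{G\ge\delta\}$ for a small $\delta>0$ (to which all the $\tilde\mu_n$ give mass within $\varepsilon$ of their total mass, by uniform absolute continuity), a compact set $K\subset\{G>0\}$ with $\tilde\mu_n(K^c)<\varepsilon$ for all $n$ and $\tilde\mu(K^c)<\varepsilon$. This reduces the problem, up to an error $2\varepsilon$, to bounding $\sup_{\|\varphi\|_{\Linfty}\le 1}\big|\int_K (Q\varphi/G)\,d(\tilde\mu_n-\tilde\mu)\big|$. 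On $K$ I pick a finite dominating measure $\lambda$ and set $g_n:=d(\tilde\mu_n|_K)/d\lambda$, $g:=d(\tilde\mu|_K)/d\lambda$: the Vitali--Hahn--Saks theorem turns setwise convergence into uniform integrability of $\{g_n\}$ in $L^1(\lambda)$, and the Dunford--Pettis theorem then yields $g_n\rightharpoonup g$ weakly in $L^1(\lambda)$.

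\emph{Step 3 (the crux: uniformity in $\varphi$).} It remains to make this weak $L^1(\lambda)$ convergence \emph{uniform} over the family $\mathcal S:=\{\,(Q\varphi)|_K/G:\|\varphi\|_{\Linfty}\le 1\,\}$ of continuous functions on $K$ (continuity holds since $G\ge\delta>0$ there). This is exactly where strong Feller-ness of $Q$ — rather than plain Feller-ness — is needed, and I expect it to be the main obstacle, since otherwise setwise convergence does not upgrade to total variation (a Riemann--Lebesgue type oscillation defeats any naive estimate). The mechanism I would use: the set function $B\mapsto (Q\ind_B)|_K$ is a countably additive $C^0(K)$-valued vector measure, because for $B_N\downarrow\emptyset$ the continuous functions $y\mapsto Q(y,B_N)$ decrease pointwise to $0$ on the compact $K$, hence uniformly by Dini's theorem; so by the Bartle--Dunford--Schwartz theorem its range — and therefore $\mathcal S$, which lies, after multiplication by the bounded factor $1/G$ and taking differences, in the closed absolutely convex hull of that range (use the layer-cake representation $Q\varphi^{\pm}=\int_0^1 Q\ind_{\{\varphi^{\pm}>t\}}\,dt$ and the Krein--{\v S}mulian theorem) — is relatively weakly compact in $C^0(K)$. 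For such a set one has $\sup_{\psi\in\mathcal S}\big|\int_K\psi\,(g_n-g)\,d\lambda\big|\to 0$: arguing by contradiction and extracting, via Eberlein--{\v S}mulian, $\psi_{n_k}\to\psi$ weakly in $C^0(K)$ — hence pointwise and uniformly bounded — one splits $\int_K\psi_{n_k}(g_{n_k}-g)\,d\lambda=\int_K\psi\,(g_{n_k}-g)\,d\lambda+\int_K(\psi_{n_k}-\psi)(g_{n_k}-g)\,d\lambda$, where the first term vanishes by weak $L^1$ convergence and the second by Egorov's theorem combined with the uniform integrability of $\{g_n\}$. Collecting Steps~1--3 and letting $\varepsilon\to 0$ gives $\|PQ(x_n,\cdot)-PQ(x,\cdot)\|_{\mathrm{TV}}\to 0$; since $x_n\to x$ was arbitrary, $x\mapsto PQ(x,\cdot)$ is continuous in total variation, i.e.\ $PQ$ is ultra-Feller. (This is the argument sketched in~\cite[Appendix A]{hairer2009non}, adapted here to the weight $G=Q\ind$.)
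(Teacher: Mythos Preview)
Your argument is correct, but it is not the one the paper gives (nor, despite your closing sentence, the one in~\cite{hairer2009non}). The paper first invokes a structural fact about strong Feller kernels on Polish spaces: there is a single reference probability measure~$\zeta$ with $Q(y,\cdot)\ll\zeta$ for every~$y$, so $Q(y,dz)=k(y,z)\,\zeta(dz)$ with $k(y,\cdot)\in L^1(\zeta)$. One then argues by contradiction: if $PQ$ were not ultra-Feller there would be $x_n\to x$ and $\|g_n\|_{\Linfty}\le 1$ with $PQg_n(x_n)-PQg_n(x)>\delta$. Banach--Alaoglu in $L^\infty(\zeta)=L^1(\zeta)^*$ gives a weak-$*$ limit $g$ of a subsequence, whence $f_n:=Qg_n\to Qg=:f$ \emph{pointwise} (test against $k(y,\cdot)\in L^1(\zeta)$). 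Since $|f_n|\le Q\ind$, the hypothesis on~$P$ and dominated convergence give $Pf_n(x)\to Pf(x)$; the diagonal term $Pf_n(x_n)$ is handled by the monotone envelope $h_n=\sup_{m\ge n}|f_m-f|$, for which $Ph_m$ is continuous and $Ph_m(x)\downarrow 0$, so $Ph_n(x_n)\to 0$. This yields the contradiction in a few lines.

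The contrast is instructive. The paper places the compactness on the side of the \emph{test functions}~$g_n$ (weak-$*$ in $L^\infty(\zeta)$), made possible by the dominating measure~$\zeta$; your Step~3 instead places it on the side of the \emph{image} $\{Q\varphi|_K:\|\varphi\|\le 1\}$ in $C^0(K)$ via the Bartle--Dunford--Schwartz theorem, and you organise the $P$-side through setwise convergence, Nikodym/Vitali--Hahn--Saks and Dunford--Pettis. Your route avoids quoting the dominating-measure lemma and is in that sense more self-contained, but it trades this for substantially heavier functional-analytic machinery (vector measures, Krein, Eberlein--\v{S}mulian, Egorov plus uniform integrability). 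The paper's proof is shorter and isolates more clearly where each hypothesis enters: the strong Feller property of~$Q$ is used exactly once, to produce~$\zeta$, and the hypothesis on~$P$ is used exactly for dominated convergence against the envelope $Q\ind$.
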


We remind some elements of the proof from~\cite[Theorem~1.6.6]{hairer2009non}, which is based
on the Banach--Alaoglu theorem. The details are left to the reader.
\begin{proof}
  A first element to prove Lemma~\ref{lem:PQ} is to show that, if~$Q$ is strong Feller, then
  there exists a reference probability measure~$\zeta\in\PX$ such that for any $x\in\X$, $Q(x, \cdot)$ is
  absolutely continuous with respect to~$\zeta$. This is shown in~\cite[Lemma~1.6.4]{hairer2009non}
  for operators~$Q$ such that $Q\ind=\ind$. Even for a non-probabilistic~$Q$, we can consider
  the normalized probabilities
  \[
  \frac{Q(x,\cdot)}{Q\ind(x)},
  \]
  for~$x$ in the open set $\widetilde{X}:=\{x\in\X\,|\, Q\ind(x)>0\}$.
  We can apply~\cite[Lemma~1.6.4]{hairer2009non} to these probabilities defined
  over the set~$\widetilde{X}$, so there exists a
  measure~$\zeta$ such that, for any~$x\in\widetilde{X}$, $Q(x,\cdot)$ is absolutely continuous
  with respect to~$\zeta$. If $x\in\X\setminus\widetilde{X}$, $Q(x,\cdot) = 0$, which is
  also absolutely continuous with respect to~$\zeta$, so that~$Q(x,\cdot)$ is absolutely
  continuous with respect to~$\zeta$ for any~$x\in\X$.

  Once this is done, one can write the kernel~$Q$ as~$Q(y,dz)=k(y,z)\zeta(dz)$ with
  $k(y,\cdot)\in L^1(\X,\zeta)$ for all~$x\in\X$.
  If one supposes by contradiction that $PQ$ is not ultra-Feller, then Definition~\ref{def:ultra}
  shows that there exist a sequence of functions~$(g_n)_{n\in\N}$ with~$\|g_n\|_{\Linfty}\leq 1$
  and a sequence~$(x_n)_{n\in\N}$ converging to an element~$x\in\X$ such that for
  some~$\delta >0$ it holds
  \begin{equation}
    \label{eq:ultraPQ}
  \forall\, n \in\N, \quad PQ g_n(x_n) - PQg_n(x) > \delta.
  \end{equation}
  Since the sequence~$(g_n)_{n\in\N}$ is bounded,
  it possesses a weak-$\ast$ converging subsequence in~$L^{\infty}(\X,\zeta)$ (the space of
  $\zeta$-essentially bounded functions) to an element~$g\in\Linfty(\X,\zeta)$. In
  particular it holds (upon extracting a subsequence), for any~$y\in\X$,
  \[
  \lim_{n\to +\infty} Q g_n(y) = \lim_{n\to +\infty} \intX k(y,z)g_n(z)\zeta(dz) = 
    \intX k(y,z)g(z)\zeta(dz) = Q g(y).
   \]
   Defining, $f_n = Q g_n$, the latter limit shows that~$f_n$ converges pointwise
   to~$f= Q g$. Since $(g_n)_{n\in\N}$ is bounded in~$\Linfty(\X)$, the second
   condition in Lemma~\ref{lem:PQ} ensures that $Pf_n(x)\to Pf(x)$ for all~$x\in\X$, by the
   dominated convergence theorem. This is the main difference compared to the proof
   in~\cite[Theorem~1.6.6]{hairer2009non}. The contradiction follows similarly.
   Indeed, defining the positive decreasing function~$h_n = \sup_{m\geq n} | f_m - f|$
   we have, for any~$m\in\N$,
   \[
   \lim_{n\to +\infty} Ph_n(x_n)\leq \lim_{n\to +\infty} Ph_m(x_n) =  Ph_m(x),
   \]
   so that $P h_n(x_n)\to 0$ as $n\to +\infty$. In the end,
   \[
   \lim_{n\to+\infty} P f_n(x_n) - Pf (x) \leq \lim_{n\to+\infty}
   |P f_n(x_n) - Pf (x_n)| + \lim_{n\to+\infty}
   |P f(x_n) - Pf (x)| = 0,
   \]
   which comes in contradiction with~\eqref{eq:ultraPQ} and concludes the proof.
\end{proof}

%%%%%%%%%%%%%%%%%%%%%%%%%%%%%%%%%%%%%%%%%%%%%%%%%%%%%%%%%%%%%%%%%%%%%%%%%%%%%%%%%
\section{Proof of Lemma~\ref{lem:preliminary}}
\label{sec:preliminary}
Let us show that $\mu(Q^f\ind)>0$ for any $\mu\in\PX$. First, Lemma~\ref{lem:tight} in
Appendix~\ref{sec:krein} ensures that, for any $\varepsilon >0$, there exists a
compact $K\subset\X$ such that $\mu(K)\geq 1 - \varepsilon$.
Consider next a compact set~$K_n$ of Assumption~\ref{as:lyapunovgeneral} such
that $K\subset K_n$. Then, with the corresponding $\alpha_n >0 $ and $\eta_n\in\PX$ defined in
Assumption~\ref{as:minogeneral}, we have
\[
\forall\, x\in K_n, \quad
(Q^f\ind)(x)\geq \alpha_n \eta_n(\ind)\geq \alpha_n >0.
\]
Integrating with respect to~$\mu$ leads to
\[
\intX (Q^f\ind)(x) \mu(dx) \geq \int_{K_n} (Q^f\ind)(x) \mu(dx) \geq \alpha_n \int_{K_n}\mu(dx)
= \alpha_n \mu(K_n) \geq \alpha_n(1-\varepsilon) >0,
\]
since $K\subset K_n$, which proves the statement.
Moreover $ W\geq 1$, so~\eqref{eq:lyapunovgeneral} implies that
$\mu(Q^f \ind)\leq \mu( Q^f W) <+\infty$ if $\mu(W)<+\infty$.

Since $W\geq 1$, we immediately have that $\eta_n(W) \geq 1 >0$ for any $n\geq 1$.
Now, for any $n\geq 1$ and $x\in K_n$, Assumptions~\ref{as:lyapunovgeneral}
and~\ref{as:minogeneral} lead to
\[
\alpha_{n}\eta_{n}(W) \leq Q^f W (x) \leq \gamma_n W(x) + b_n\ind_{K_n}(x) < +\infty,
\]
since~$W$ is finite. Moreover, $\alpha_{n}>0$, so that $\eta_n(W) < +\infty$ for any $n\geq 1$.

Let us conclude with the proof of~\eqref{eq:nbar}. We proceed by contradiction
and assume that, for any $n\geq n_0$, we have $\eta_n(K_n)=0$, with $n_0\geq 1$
an arbitrary integer. Consider $m\geq n_0$
and $\varphi_m=\ind_{K_m}\geq 0$. Then, using~\eqref{eq:minogeneral} with
$n=n_0$,
\[
\forall\, x\in K_{n_0},\quad \big( Q^f\varphi_m\big) (x) \geq \alpha_{n_0} \eta_{n_0}(K_m).
\]
Using again Lemma~\ref{lem:tight} in Appendix~\ref{sec:krein}, we see that for $m$
large enough, $\big( Q^f\varphi_m\big) (x) >0$ for $x\in K_{n_0}$ and so $Q^f\varphi_m\neq 0$.
However, for $n\geq m$, we have, using that $K_m\subset K_n$ (since the sets are
increasing):
\[
0\leq \eta_n(\varphi_m)=\eta_n(K_m)\leq \eta_n(K_n) = 0,
\]
since we assumed $\eta_n(K_n) = 0$ for $n\geq {n_0}$. The contradiction
with~\eqref{eq:irreducibility} shows that there exists $\nb\geq {n_0}$ such that
$\eta_{\nb}(K_{\nb}) >0$. Since~${n_0}$ is arbitrary, $\nb$ can be chosen arbitrarily large,
and this concludes the proof of Lemma~\ref{lem:preliminary}.

\section{Proof of Lemma~\ref{lem:specQV}}
\label{sec:specQV}

The proof is decomposed into three steps. First we show that the essential spectral radius of the
operator~$Q^f$ considered over~$\Linfty_W(\X)$ is zero. We next prove that the spectral radius~$\Lambda$
of~$Q^f$ is positive. Finally, we use the Krein-Rutman theorem to obtain that~$\Lambda$
is a eigenvalue of~$Q^f$ with largest modulus, and that the associated eigenvector is
positive.

\subsubsection*{Step 1: $Q^f$ has zero essential spectral radius}

We first perform the following decomposition, for any $n\geq 1$:
\[
(Q^f)^2 =  \ind_{K_n}Q^f\ind_{K_n}Q^f + \ind_{K_n^c}(Q^f)^2 + \ind_{K_n}Q^f\ind_{K_n^c}Q^f,
\]
where $K_n\subset\X$ are the compact sets from Section~\ref{sec:discrete}.
Applying again~$Q^f$ leads to 
\begin{equation}
  \label{eq:decompQV}
  (Q^f)^3 =  (\ind_{K_n}Q^f\ind_{K_n})^2 Q^f + \ind_{K_n^c}Q^f(\ind_{K_n}Q^f)^2 +
  Q^f\ind_{K_n^c}(Q^f)^2 + Q^f\ind_{K_n}Q^f\ind_{K_n^c}Q^f.
\end{equation}
We will show that $Q_n^f:=\ind_{K_n}Q^f\ind_{K_n}$ is such that~$(Q_n^f)^2$ is compact
on~$\Linfty_W(\X)$,
while~$\ind_{K_n^c}Q^f$ tends to zero in norm. This will prove that~$(Q^f)^3$ is compact as
limit of compact operators in operator norm, so the essential spectral radius of~$Q^f$ 
in~$\Linfty_W(\X)$, denoted by~$\theta(Q^f)$, is equal to zero.

Let us first prove that~$(Q_n^f)^2$ is compact on~$\Linfty_W(\X)$ for any $n\in\N$.
For this, we use the ultra-Feller property proved in Lemma~\ref{lem:PQ} (see
Appendix~\ref{sec:krein}) to apply the Ascoli theorem.
Consider a sequence~$(\varphi_k)_{k\in\N}$ in~$\Linfty_W(\X)$ such that $\| \varphi_k \|_{\Linfty_W} \leq M$
for some $M \geq 0$. By Assumption~\ref{as:regularity}, the operator $Q_n^f$ is strong Feller over
the compact set~$K_n$. In particular, for~$\varphi\in\Linfty_W(\X)$, $\varphi\ind_{K_n}\in\Linfty(\X)$,
so~$Q_n^f\varphi$ is continuous over~$K_n$ and finite, so that
Lemma~\ref{lem:PQ} in Appendix~\ref{sec:krein} applies. Indeed, the second condition in the lemma
is easy to check since~$Q_n\ind$ is equal to zero outside the compact~$K_n$.  Therefore,~$(Q_n^f)^2$ is
ultra-Feller by Lemma~\ref{lem:PQ}. By Definition~\ref{def:ultra}, the application
$x\in K_n \mapsto (Q_n^f)^2(x,\cdot)\in\mathcal{M}(\X)$ is continuous in total variation norm.
Since~$K_n$ is compact in the metric space~$\X$ and~$\PX$ is a metric space,
the Heine-Cantor theorem (Theorem~\ref{theo:heine} in Appendix~\ref{sec:krein}) ensures that
this application is continuous over~$K_n$. This means that, for any $\varepsilon >0$,
there exists $\delta >0$ such that, for any $x,x'\in K_n$ with $| x - x'|\leq \delta$, it holds
\begin{equation}
  \label{eq:intepsilon}
\underset{\| \varphi \|_{\Linfty}\leq 1}{\sup}\
\Big| \big( (Q_n^f)^2\varphi\big)(x) - \big( (Q_n^f)^2\varphi\big)(x') \Big| \leq \varepsilon. 
\end{equation}
Noting that Assumption~\ref{as:lyapunovgeneral} implies that
$1\leq \sup_{K_n} W < +\infty$, it holds $M_n = (\sup_{K_n}W)^{-1} \in (0,1]$
for any $n\geq 1$, so
  \begin{equation}
    \label{eq:Mn}
\big\{ \varphi \mbox{ measurable}\ \big| \ \| \ind_{K_n} \varphi \|_{\Linfty}\leq 1 \big\}
\supset \big\{ \varphi \mbox{ measurable}\ \big|\ \| \ind_{K_n} \varphi \|_{\Linfty_W}\leq M_n \big\}.
\end{equation}
Since $Q_n^f = \ind_{K_n}Q^f\ind_{K_n}$,~\eqref{eq:Mn} shows that~\eqref{eq:intepsilon}
implies
\[
\underset{\| \varphi \|_{\Linfty_W}\leq M_n}{\sup}\
\Big| \big( (Q_n^f)^2\varphi\big) (x) - \big( (Q_n^f)^2\varphi \big) (x') \Big| \leq \varepsilon. 
\]
As a consequence, if~$(\varphi_k)_{k\in\N}$ is such that $\| \varphi_k \|_{\Linfty_W} \leq M$,
we see that~$\big((Q_n^f)^2\varphi_k\big)_{k\in\N}$ is equicontinuous.
By the Ascoli theorem, it therefore converges uniformly to a continuous limit on~$K_n$
(since the function is supported on~$K_n$, we extend it by~$0$ on~$\X$ outside~$K_n$).
Since $W\geq 1$, it also converges as a function in~$\Linfty_W(\X)$, showing that~$(Q_n^f)^2$ is
a compact operator on~$\Linfty_W(\X)$. Since~$Q^f$ is bounded over~$\Linfty_W(\X)$ and the space
of compact operators is stable by composition with bounded
operators~\cite{reed1980functional},~$(Q_n^f)^2 Q^f$ is also compact.

We now show that the second, third and fourth operators on the right hand
side of~\eqref{eq:decompQV} tend to~$0$ in the operator norm of~$\Linfty_W(\X)$. For any
$\varphi\in\Linfty_W(\X)$,
\[
\big\| \ind_{K_n^c} Q^f\varphi \big\|_{\Linfty_W}  =
  \left\|\frac{  \mathds{1}_{K_n^c} Q^f\varphi}
             {W}   \right\|_{\Linfty}
              \leq \| \varphi \|_{\Linfty_W} \left\| \ind_{K_n^c} \frac{Q^f W}{W}
             \right\|_{\Linfty}
 \leq \gamma_n  \| \varphi \|_{\Linfty_W}.
\]
Taking the supremum over $\varphi\in\Linfty_W(\X)$ and using 
$\gamma_n\to 0$ as $n\to +\infty$, we obtain:
\begin{equation}
  \label{eq:Wcomp}
 \left\| \mathds{1}_{K_n^c} Q^f \right\|_{\B(\Linfty_W)} \xrightarrow[n\to +\infty]{} 0. 
 \end{equation}
Since~$Q^f$ is bounded on~$\B(\Linfty_W)$, the second, third and fourth operators on the right
hand side of~\eqref{eq:decompQV} vanish in norm as $n\to +\infty$. As a result,~$(Q^f)^3$ is the
 norm-limit of the compact operators~$(Q_n^f)^2 Q^f$ as $n\to+\infty$
 in~$\B(\Linfty_W)$.  Since the set of compact operators over~$\Linfty_W(\X)$ is closed in the
 Banach space~$\B(\Linfty_W)$,~$(Q^f)^3$ is compact, see
 \textit{e.g.}~\cite[Theorem~VI.12]{reed1980functional}. Using Definition~\ref{def:operators},
 we conclude that $\theta(Q^f)=0$. In this procedure, we see that working in the weighted
 space~$\Linfty_W(\X)$ as opposed to~$\Linfty(\X)$ is crucial in order to obtain the compactness
 of~$(Q^f)^3$ from the control~\eqref{eq:Wcomp} provided by the
 Lyapunov condition~\eqref{eq:lyapunovgeneral}.

 %%%%%%%%%%%%%%%%%%%%%%%%%%%%%
\subsubsection*{Step 2: The spectral radius is positive}
We now show that the spectral radius~$\Lambda$ of~$Q^f$ defined in~\eqref{eq:gelfand} is
positive, in order to use Theorem~\ref{theo:weakkrein}. Given the definition of the operator norm,
choosing some arbitrary non negative function $\phi\in\Linfty_W(\X)$ with
$\|\phi \|_{\Linfty_W}\leq 1$ leads to
\[
\big\| Q^f \big\|_{\mathcal{B}(\Linfty_W)} \geq \left\|
\frac{ Q^f\phi}{W}\right\|_{\Linfty} \geq \frac{ \left(Q^f\phi\right)(x_0)}{W(x_0)}, 
\]
where $x_0\in\X$ is arbitrary. We now consider a compact set corresponding to
some $n=\nb$ as defined in Lemma~\ref{lem:preliminary}, which satisfies $\eta_\nb(K_\nb) >0$, and
take $x_0\in K_\nb$. For any non negative function $\phi\in \Linfty_W(\X)$
with $\|\phi \|_{\Linfty_W}\leq 1$, 
\begin{equation}
  \label{eq:intmino}
  \eta_\nb\left(Q^f\phi\right)  = \left( \int_{K_\nb} (Q^f\phi)(x)\,\eta_\nb(dx)
  + \int_{\X\setminus K_\nb} (Q^f\phi)(x)\,\eta_\nb(dx) 
  \right)
  \geq  \int_{K_\nb} \alpha_\nb \eta_\nb(\phi)\, \eta_\nb(dx)
  \geq \alpha_\nb \eta_\nb(\phi)\, \eta_\nb(K_\nb),
\end{equation}
where we used~\eqref{eq:minogeneral} with $n=\nb$. Iterating the
inequality shows that 
\[
\forall\, k\geq 1, \quad
\eta_\nb\left( (Q^f)^k\phi \right) \geq \alpha_\nb^k \eta_\nb(K_\nb)^k \eta_\nb(\phi).
\]
This leads to the following lower bound on the operator norm of~$(Q^f)^k$:
\[
\begin{aligned}
  \big\| (Q^f)^k \big\|_{\mathcal{B}(\Linfty_W)}
  & \geq \frac{ \left((Q^f)^k\phi\right)(x_0)}{W(x_0)}
   = \frac{ \left(Q^f ((Q^f)^{k-1}\phi)\right)(x_0)}{W(x_0)}
  \\ & \geq \alpha_\nb \frac{ \eta_\nb\left((Q^f)^{k-1}\phi\right)}{W(x_0)}
  \geq \frac{\alpha_\nb^k \eta_\nb(K_\nb)^{k-1}}
       {W(x_0)}\eta_{\nb}(\phi).
\end{aligned}
\]
Taking the power~$1/k$ and the limit $k\to +\infty$, together with the choice
$\phi=\ind\in\Linfty_W(\X)$, leads to
\[
\Lambda \geq \alpha_\nb \eta_\nb(K_\nb).
\]
From Lemma~\ref{lem:preliminary}, it holds $\eta_\nb(K_\nb) >0$,
hence $\Lambda >0$ and~$Q^f$ has a positive spectral radius. Note that the
existence of $\nb\geq 1$ such that $\eta_\nb(K_\nb) >0$ is crucial for this step.

%%%%%%%%%
\subsubsection*{Step 3: Existence of a principal eigenvector}

In order to use Theorem~\ref{theo:weakkrein}, we introduce the closed cone:
\[
\Kb_W=\big\{u \in \Linfty_W(\X) \ \big| \ u \geq 0 \big\}.
\]
This cone is total, and the positivity of $Q^f\in\Linfty_W(\X)$ shows that
$Q^f \Kb \subset \Kb$. At this stage, Theorem~\ref{theo:weakkrein} in
Appendix~\ref{sec:krein} ensures that the spectral radius~$\Lambda$ is an
eigenvalue of~$Q^f$ of largest modulus with an associated eigenvector
$h\in\Kb_W\setminus \{0\}$.

%%%%%%%%%%%%%%%%%%%%%%%
\subsubsection*{Step 4: Positivity}
We now use the irreducibility condition~\eqref{eq:irreducibility} to
show that, for the eigenvector~$h$ obtained in Step~3, it holds
$h(x)>0$ for all $x\in\X$ and hence $\eta_n(h)>0$ all~$n\geq 1$.

Let us show the first property by contradiction. Assume that there exists
$x_0\in\X$ such that $h(x_0)=0$. Since the sets~$K_n$ are increasing, there
exists~$n_0$ such that for all $n\geq n_0$ it holds $x_0\in K_n$ so that,
by~\eqref{eq:minogeneral},
\[
\forall\, n\geq n_0,\quad \big(Q^f h\big) (x_0)\geq \alpha_n \eta_n(h).
\]
Since $ Q^f h = \Lambda h$ with $\Lambda >0$, this leads to
\[
0 \geq \eta_n(h),
\]
and so $\eta_n(h) = 0$ for $n\geq n_0$. By the irreducibility assumption~\eqref{eq:irreducibility},
we therefore have $(Q^f h)(x)=0$ for all $x\in\X$. Using again $Q^fh = \Lambda h$,
this shows that $h=0$, which is in contradiction with the fact that~$h$ is an
eigenvector associated with~$\Lambda$.

The second property follows from $h(x) >0$ for all~$x\in\X$ and $\eta_n\in\PX$ for all $n\geq 1$.
Indeed, 
\begin{equation}
  \X = \bigcup_{k\geq 1}\ h^{-1}\Big[ \frac{1}{k},+\infty\Big),
\end{equation}
where~$h^{-1}$ denotes here the pre-image of~$h$. Therefore, for a given $n\geq 1$,
\[
\eta_n(\X) = \eta_n\big( h^{-1}[1,+\infty)\big)
  + \sum_{k\geq 1} \eta_n \Big( h^{-1}\Big[\frac{1}{k+1},\frac{1}{k}\Big)\Big) = 1.
\]
Thus, there exists $N\geq 1$ such that
\[
\eta_n\Big( h^{-1}\Big[ \frac{1}{N},+\infty\Big) \Big) \geq \frac{1}{2},
\]
so
\[
\eta_n(h) \geq \eta_n \big( h\ind_{ h\geq \frac{1}{N} }\big)
\geq \frac{1}{N}\eta_n \big(\ind_{ h\geq \frac{1}{N} }\big)\geq \frac{1}{2N}.
\]
Since $n\geq 1$ is arbitrary, this shows that $\eta_n(h)>0$ for all $n\geq 1$.

%%%%%%%%%%%%%%%%%%%%%%%%%%%%%%%%%%%%%%%%%%%%%%%%%%%%%%%%%%%%%%%%%%%%%%%%%%
\section{Proof of Lemma~\ref{lem:Qh}}
\label{app:Qh}
A first important remark is that~$Q_h$ is a Markov operator. Indeed, it is a well-defined
kernel operator (since $0<h(x)<+\infty$ for all $x\in\X$),
and $Q_h\ind = \Lambda^{-1}h^{-1} Q^f h = \Lambda^{-1}h^{-1} \Lambda h = \ind$.
Our goal is therefore to show that the Markov operator~$Q_h$ fits the framework reminded in
Appendix~\ref{sec:tools}, in particular that it satisfies Assumptions~\ref{as:lyapunov}
and~\ref{as:minorization}.

Let us show that this operator satisfies Assumption~\ref{as:lyapunov} in Appendix~\ref{sec:tools}
with Lyapunov function~$Wh^{-1}$. We first note that the normalization $\| h \|_{\Linfty_W} =1$
implies that $Wh^{-1}\geq 1$. Using Assumption~\ref{as:lyapunovgeneral}, we obtain
\[
Q_h(Wh^{-1}) = \Lambda^{-1} h^{-1} Q^f W \leq \Lambda^{-1} h^{-1}\left(
\gamma_n W + b_n \ind_{K_n} \right)
\leq \frac{\gamma_n}{\Lambda}Wh^{-1} + \frac{b_n}{\Lambda  h}\ind_{K_n}.
\]
Noting that, for all $x\in K_n$,
\[
\Lambda h(x) = (Q^fh) (x) \geq \alpha_n \eta_n(h),
\]
with $\eta_n(h) >0$ from Lemma~\ref{lem:specQV}, the above inequality becomes
\begin{equation}
  \label{eq:lyapQh}
Q_h(Wh^{-1}) \leq \frac{\gamma_n}{\Lambda}Wh^{-1} + \frac{b_n}{\alpha_n \eta_n(h)}\ind_{K_n}. 
\end{equation}
Since~$\gamma_n$ can be taken arbitrarily small and $\eta_n(h)>0$ for any $n\geq 1$, we deduce
that~$Wh^{-1}$ is a Lyapunov function for~$Q_h$ in the sense of Assumption~\ref{as:lyapunov}
in Appendix~\ref{sec:tools}.
\begin{remark}
  \label{rem:Qh}
  Let us mention that, in order for~\eqref{eq:lyapQh} to define a Lyapunov condition in the
sense of Assumption~\ref{as:lyapunov}, it is not necessary to have $\gamma_n\to 0$ as
$n\to +\infty$. The existence of $n\geq 1$ such that $\gamma_n < \Lambda$ is  sufficient.
\end{remark}

We will now prove that: (i)~$Wh^{-1}$ has compact level sets, and (ii)~$Q_h$
satisfies Assumption~\ref{as:lyapunov} in Appendix~\ref{sec:tools} on any compact set~$K_n$,
that is~$\inf_{K_n} Q_h$ is lower bounded by some probability measure.
First, choosing $x_n\notin K_n$ in Assumption~\ref{as:lyapunovgeneral} leads to
\[
\Lambda h(x_n) = (Q^f h) (x_n) \leq \gamma_n W(x_n), 
\]
so that
\begin{equation}
  \label{eq:divWh}
\frac{W(x_n)}{h(x_n)} \geq \frac{\Lambda}{\gamma_n}.
\end{equation}
Since $\gamma_n \to 0$ as~$n\to +\infty$, the function~$Wh^{-1}$ diverges outside the compact
sets~$K_n$ defined in Assumption~\ref{as:lyapunovgeneral}. In other words,~$Wh^{-1}$ has
compact level sets, which shows (i).

Next, for $n\geq 1$, consider $\alpha_n >0$ and $\eta_n\in\PX$ as in Assumption~\ref{as:minogeneral},
so that, for any bounded measurable function $\varphi \geq 0$ and $x\in K_n$,
\[
Q_h\varphi(x) = \Lambda^{-1}\frac{Q^f( h\varphi)(x)}{h(x)}
\geq \frac{1}{\Lambda \sup_{K_n} h} \alpha_n
\eta_n(h \varphi) \geq \widetilde{\alpha}_n\, \widetilde{\eta}_{n}(\varphi),
\]
with 
\[
\widetilde{\alpha}_n = \alpha_n \frac{ \eta_n(h)}{\Lambda \sup_{K_n} h } >0,
\quad\ \widetilde{\eta}_n(\varphi)= \frac{\eta_n(h\varphi)}{\eta_n(h)}\in\PX.
\]
The latter expression is well-defined because, from Lemma~\ref{lem:specQV}, we know
that $0 < \eta_n(h)< +\infty $ for any $n\geq 1$. Moreover, $0 < \sup_{K_n} h < +\infty$
(since $h\in\Linfty_W(\X)$ and $\sup_{K_n} W <+\infty$ by
Assumption~\ref{as:lyapunovgeneral}), and this yields precisely (ii). Finally, (i) and (ii)
show that~$Q_h$ satisfies Assumption~\ref{as:minorization}, so that~$Q_h$ satisfies the assumptions of
Theorem~\ref{theo:HM}. As a result there exist a unique $\mu_h\in\PX$ and constants $c>0$,
$\bar{\alpha}\in (0,1)$ such that for any $\phi \in \Linfty_{W h^{-1}}(\X)$,
\[
\forall\, k\geq 0,\quad
\big\| Q_h^k\phi - \mu_h(\phi) \big\|_{\Linfty_{Wh^{-1}}} \leq c \bar{\alpha}^k
\| \phi - \mu_h(\phi) \|_{\Linfty_{Wh^{-1}}}.
\]
Moreover, the measure~$\mu_h$ satisfies $\mu_h(Wh^{-1})<+\infty$.

\section{Proof of Lemma~\ref{lem:estimateh}}
\label{sec:estimateh}

From~\cite[Proposition 1]{ferre2017error}, we obtain that $\Lc + f$ has a largest (in modulus)
eigenvalue~$\lambda$ with associated smooth eigenvector~$h$. Similarly, Lemma~\ref{lem:specQV}
shows that for any $\Dt \in(0,\Dt^*]$ the operator~$\Qdt^f$ has a largest (in modulus)
eigenvalue~$\Lambda_{\Dt}$ with continuous eigenvector~$h_{\Dt}$ (since~$\Qdt^f$ is assumed to
be strong Feller). Moreover, there is no restriction of
generality in normalizing~$h_{\Dt}$ so that $\eta(h_{\Dt})=1$.

We now turn to the estimate~\eqref{eq:lambdadt} on the spectral radius. In the notation
of~\cite{ferre2017error}, we have $\Lambda_{\Dt}=\e^{\Dt \lambdadt}$. A direct application
of~\cite[Theorem 3]{ferre2017error} then shows that there exist $\Dt^* > 0$ and $C >0$ such that
$\lambdadt = \lambda + \Dt c_{\Dt}$ with $| c_{\Dt} | \leq C $ for $\Dt \in (0,\Dt^*]$,
  which is the desired result.

Finally, since $\Qdt^f h_{\Dt}=\Lambda_{\Dt}h_{\Dt}$, the lower bound~\eqref{eq:minounif}
applied to $\varphi = h_{\Dt} \geq 0$ leads to
\[
\forall\, x \in \X, \quad
\left(\Qdt^f\right)^{\ceil{\frac{T}{\Dt}}}h_{\Dt}(x) = \Lambda_{\Dt}^{\ceil{\frac{T}{\Dt}}}
h_{\Dt}(x)  \geq \alpha \eta( h_{\Dt}).
\]
Using the estimate on~$\Lambda_{\Dt}$ and the normalization $\eta(h_{\Dt})=1$ we obtain,
for $\Dt\in (0,\Dt^*]$ (possibly upon decreasing~$\Dt^*$) and $ x \in\X$,
\[
h_{\Dt}(x) \geq  \Lambda_{\Dt}^{-\ceil{\frac{T}{\Dt}}} \alpha \eta( h_{\Dt})
\geq 
\alpha\, \e^{ -\Dt( \lambda +\Dt c_{\Dt}) \ceil{\frac{T}{\Dt}}  }
\geq \alpha\, \e^{-2 T |\lambda|}.
\]
A similar computation leads to an analogous upper bound, which shows~\eqref{eq:hdt}.

%------------------- BIBLIO ------------------------
\bibliographystyle{abbrv}
%\bibliography{bibliography}
%\bibliography{../../../../Bibliography/bibliography}

\end{document}